\DeclareMathOperator{\Lie}{Lie}
\DeclareMathOperator{\ad}{ad}
  \renewenvironment{thebibliography}[1]{
    \begin{oldthebibliography}{#1}
      \setlength{\parskip}{0ex}
      \setlength{\itemsep}{0ex}
  }
  {
    \end{oldthebibliography}
  }
\newcounter{rownum}
\newtheorem{lemma}{Lemma}[section]
\newtheorem{theorem}[lemma]{Theorem}
\theoremstyle{definition}
\newtheorem*{algorithm*}{Algorithm}
\theoremstyle{remark}
\newtheorem{remark}[lemma]{Remark}
\newtheorem{remarks}[lemma]{Remarks}
\theoremstyle{definition}
\newtheorem{defn}[lemma]{Definition}
\newcommand{\Ext}{\mathrm{Ext}}
\newcommand{\SO}{\mathrm{SO}}
\newcommand{\Sp}{\mathrm{Sp}}
\newcommand{\SL}{\mathrm{SL}}
\newcommand{\diag}{\mathrm{diag}}
\newcommand{\Z}{\mathbb{Z}}
\newcommand{\gl}{\mathfrak{gl}}
\newcommand{\m}{\mathfrak{m}}
\newcommand{\h}{\mathfrak{h}}
\newcommand{\so}{\mathfrak{so}}
\renewcommand{\sp}{\mathfrak{sp}}
\newcommand{\Vm}{V_\text{min}}
\def\arraystretch{1.2}
\subjclass[2010]{17B45, 20G99}
\title{Representatives for unipotent classes and nilpotent orbits}
\author{Mikko Korhonen${}^*{}^\dagger$}
\thanks{$^*$ Partially supported by SNSF grant 200021\_146223 and NSFC grants 11771200 and 11931005}
\thanks{$^\dagger$ We thank the Isaac Newton Institute for Mathematical Sciences for support and hospitality during the programme Groups, Representations and Applications: New perspectives, when some of the work on this paper was undertaken. (EPSRC grant number EP/R014604/1.)}
\address{\parbox{\linewidth}{Department of Mathematics, Southern University of Science and Technology, \text{Shenzhen} 518055,\\ \text{Guangdong}, P. R. China}}
\email{korhonen\_mikko@hotmail.com {\text{\rm(Korhonen)}}}
\author{David I. Stewart$^\dagger$}
\address{School of Mathematics and Statistics, Herschel Building, Newcastle, NE1 7RU, UK
} 
\email{david.stewart@ncl.ac.uk {\text{\rm(Stewart)}}}
\author{Adam R. Thomas$^\dagger$} 
\address{Department of Mathematics, University of Warwick, Coventry, CV4 7AL, UK} 
\email{adam.r.thomas@warwick.ac.uk {\text{\rm{(Thomas)}}}}
\newlength\oldparskip 
\begin{document}
\pagestyle{plain}
\begin{abstract} Let $G$ be a simple algebraic group over an algebraically closed field $k$ of characteristic $p$. The classification of the conjugacy classes of unipotent elements of $G(k)$ and nilpotent orbits of $G$ on $\Lie(G)$ is well-established. One knows there are representatives of every unipotent class as a product of root group elements and every nilpotent orbit as a sum of root elements. We give explicit representatives in terms of a Chevalley basis for the {\em eminent} classes. A unipotent (resp. nilpotent) element is said to be eminent if it is not contained in any subsystem subgroup (resp. subalgebra), or a natural generalisation if $G$ is of type $D_n$. From these representatives, it is straightforward to generate representatives for any given class. Along the way we also prove recognition theorems for identifying both the unipotent classes and nilpotent orbits of exceptional algebraic groups.     
\end{abstract}

\vspace*{-8ex}
\maketitle

\vspace*{-4ex}
\section{Introduction}\label{s:intro}
Let $G$ be a connected reductive algebraic group over an algebraically closed field $k$ of characteristic $p$. Fix a maximal torus $T$ of $G$, and positive system of roots; then it is well-known that every unipotent class has a representative which is a product of root group elements and that every nilpotent orbit has a representative which is a sum of root elements. However, it is not easy to extract such representatives from the existing literature and there are a few errors (some perpetrated by the second author). This note gives a definitive list when $G$ is simple. For compactness, we provide the representatives of certain distinguished classes we call {\em eminent} classes.  An eminent unipotent (resp.~nilpotent) element is one which is not contained in any proper subsystem subgroup (resp. subalgebra) of $G$, or a natural generalisation if $G$ has a factor of type $D_n$ ---the precise definition is given in the next section. Given representatives of the eminent classes in the simple case, it is then a routine task to find representatives of all classes for $G$. (See Section \ref{s:allclasses}.)

\begin{theorem}\label{nilreps} Let $G$ be a simple $k$-group over a field $k=\bar k$ of characteristic $p$ with Lie algebra $\Lie(G)$. 
\begin{enumerate}[\normalfont (i)]
\item A nilpotent element $e\in \Lie(G)$ is eminent if and only if it is $G$-conjugate to one of the representatives in Table \ref{t:classnil} ($G$ of classical type) or Table~\ref{t:exnil} ($G$ of exceptional type). Moreover, no two representatives in Tables \ref{t:classnil} and \ref{t:exnil} are $G$-conjugate.
\item A unipotent element $u\in G$ is eminent if and only if it is $G$-conjugate to one of the representatives in Table \ref{t:classunip} ($G$ of classical type) or Table~\ref{t:exunip} ($G$ of exceptional type). Moreover, no two representatives in Tables \ref{t:classunip} and \ref{t:exunip} are $G$-conjugate.
\end{enumerate}
\end{theorem}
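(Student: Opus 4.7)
The plan is to treat the four assertions—classical nilpotent, classical unipotent, exceptional nilpotent, exceptional unipotent—in turn. In every case the starting observation is that eminence implies distinguishedness, since a Levi subalgebra is itself a subsystem subalgebra. This immediately restricts attention to the (relatively short) list of distinguished classes supplied by Bala--Carter theory, together with its bad-characteristic refinements in the exceptional cases.

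For the classical types I would parametrise nilpotent classes by Jordan type on the natural module $V$, with the usual decorations in types $B$, $C$, $D$. A proper classical-type subsystem subgroup corresponds to a decomposition of $V$ into non-degenerate orthogonal summands preserved by the element, while any other maximal-rank subsystem subgroup is absorbed into a smaller classical Lie algebra and imposes no additional constraint. Eminence thus translates into an indecomposability condition on the Jordan data, and the $D_n$-variant of the definition is precisely what is needed to accommodate the identifications induced by the graph automorphism of that type. Writing down an explicit sum-of-root-vectors representative realising the stipulated Jordan data can then be done by hand, and the pairwise non-conjugacy of the table entries follows from the distinctness of their invariants.

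For the exceptional types I would work through the Bala--Carter list of distinguished orbits in $G_2, F_4, E_6, E_7, E_8$ class by class, augmented in bad characteristic by the additional distinguished classes and the extra maximal subsystem subgroups beyond the Borel--de Siebenthal list. For each distinguished representative I would either exhibit it inside a proper maximal-rank subsystem subalgebra (demonstrating non-eminence) or rule out every such containment using centraliser-dimension bookkeeping (comparing $\dim \Lie(G)^e$ with the centraliser dimensions available inside each candidate proper subsystem) combined with weighted Dynkin-diagram data. The corresponding unipotent statements follow via the recognition theorems announced in the abstract, with each eminent unipotent representative written as a product of root group elements whose Jordan action on a faithful representation matches that of the corresponding nilpotent one.

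The main obstacle is the bookkeeping in bad characteristic for the exceptional groups: when $p \in \{2,3,5\}$ one has to accommodate extra distinguished classes, enlarged families of maximal subsystem subgroups, and occasional splittings or coincidences of orbits under isogenies. I would address these by leaning on the established tabulations of unipotent and nilpotent classes in exceptional groups together with the recognition theorems, verifying for each asserted eminent representative that its centraliser structure is incompatible with membership in any proper subsystem subalgebra, and exhibiting an explicit embedding in every non-eminent case.
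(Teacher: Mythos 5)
Your overall architecture does match the paper's: reduce to distinguished classes, handle the classical groups via the orthogonal decomposition of the natural module, and handle the exceptional groups by fusing the distinguished classes of the maximal (generalised) subsystem subalgebras into $G$ and identifying them via recognition data. However, two of your key steps would fail as stated. First, in the classical case your claim that ``eminence translates into an indecomposability condition on the Jordan data'' is false in type $D_n$, in both directions. The nilpotent element with $V\downarrow k[x]=W_n(n)$ (the regular one, $p=2$) is orthogonally indecomposable yet \emph{not} eminent, because it lies in the generalised subsystem subalgebra of type $B_{n-1}$; conversely, the eminent unipotent classes of $D_n$ for $p=2$ have $V\downarrow k[u]=V(2n-2l+2)+V(2l-2)$, which is orthogonally \emph{decomposable} into two non-degenerate summands --- the point being that a single block $V(2m)$ does not meet $\SO_{2m}$ when $p=2$, so the element does not lie in the subsystem subgroup of type $D_{n_1}D_{n_2}$ even though it stabilises the decomposition. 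Relatedly, the $D_n$-generalisation of ``subsystem'' has nothing to do with the graph automorphism: its purpose is to adjoin the subgroups $B_mB_{n-m-1}$ (in particular $B_{n-1}$, the stabiliser of a non-singular vector), which are maximal connected overgroups not normalised by a maximal torus; without them every regular element of $D_n$ would be vacuously ``eminent''. Your argument needs the explicit list of maximal generalised subsystem subgroups (the paper's Lemma \ref{l:maxgensubsys}) and a case analysis against each, not an indecomposability criterion.

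Second, for the exceptional groups in bad characteristic the tools you name --- weighted Dynkin diagrams and centraliser-dimension comparisons --- are respectively unavailable and insufficient. Weighted Dynkin diagrams do not parametrise nilpotent orbits when $p$ is bad (this is exactly the regime where the extra classes such as $(A_7)_3$ and $(D_7)_2$ appear), and centraliser dimensions, and even the full Jordan block data on both $\Vm$ and $\Lie(G)$, coincide for genuinely distinct orbits: this is precisely why the paper must prove Theorem \ref{p:nilprec} and compile the auxiliary tables of derived-series, derivation-algebra and nilradical invariants of $C_{\Lie(G)}(x)$. Your closing appeal to ``the recognition theorems announced in the abstract'' is the correct move, but it concedes that the substantive content of the exceptional case lies in establishing those recognition theorems and in computing, for every distinguished orbit of every maximal subsystem subalgebra, enough invariants to pin down its $G$-class; the dimension-plus-Dynkin-diagram bookkeeping you propose would not discharge this.
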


We note that the only time a regular element is not eminent is when $G$ is of type $D_n$. Indeed, such elements are contained in generalised subsystem subgroups or subalgebras of type $B_{n-1}$.

We make a few remarks on the tables. An explanation of the notation used is given in Section \ref{s:notation}; we observe Bourbaki doctrine when exhibiting representatives. When $G=\SL(V)$, $\SO(V)$ or $\Sp(V)$ is a classical group, we also supply the action of the elements on the natural $G$-module $V$, which characterises them up to conjugacy in $G$. Lastly, since we use the same representatives as \cite[Tables~13.3, 14.1]{LS12}, one can invoke \cite[Thms~19.1,19.2]{LS12} to generate representatives for the classes in Table \ref{t:exunip} from those in Table \ref{t:exnil} as follows: a representative of the unipotent class with label $X$ is $\prod_{j \in J} x_{\alpha_j}(1)$, where $\sum_{j \in J} e_{\alpha_j}$ is the representative of the nilpotent element with label $X$. 
\setlength{\parskip}{\oldparskip}
\begin{center}\begin{longtable}{|c|c|c|c|}
\hline 
Type & Char. & Representation & Representative\\
\hline
$A_n$ & any & $V(n+1)$ & $e_{\alpha_1}+\dots+e_{\alpha_n}$\\
\hline
$B_n$; $n \geq 2$ & $p \neq 2$ & $V(2n+1)$ & $e_{\alpha_1}+\dots+e_{\alpha_n}$\\ 
& $p=2$ & $D(n+1)$ & $e_{\alpha_1}+\dots+e_{\alpha_n}$\\ 
\hline
$C_n$; $n\geq 2$ & any & $V(2n)$ & $e_{\alpha_1}+\dots+e_{\alpha_n}$\\
 & $p=2$ & $W_l(n)$; $0<l<\frac{n}{2}$  & $\left(\sum_{i=1}^{n-1} e_{\alpha_i}\right)+  e_{2 \alpha_l + \cdots + 2 \alpha_{n-1} + \alpha_n}$\\
\hline
$D_n$; $n\geq 4$ & $p=2$ & $W_l(n)$; $\frac{n+1}{2}<l< n$ & $\sum_{i=1}^{n-1}e_{\alpha_i}+e_{\beta_l}$, where $\beta_l = \alpha_{n} + \sum_{i=2l-n-1}^{n-2} \alpha_{i}$\\
\hline
\caption{Eminent nilpotent representatives in classical types}\label{t:classnil}\end{longtable}\end{center}

\let\oldarraystretch\arraystretch
\let\oldarraycolsep\arraycolsep

\begin{center}\begin{longtable}{|c|c|c|c|}\hline Type & Char. & Label & Representative\\\hline
$G_2$ & any & $G_2$ &  \def\arraystretch{0.5} \arraycolsep=0pt $e_{\tiny\begin{array}{c c c c c c c}1&0\end{array}}+ e_{\tiny\begin{array}{c c c c c c c}0&1\end{array}}$ \\
& $p=3$ & $(\tilde{A}_1)_3$ & \def\arraystretch{0.5} \arraycolsep=0pt $e_{\tiny\begin{array}{c c c c c c c}2&1\end{array}}+
e_{\tiny\begin{array}{c c c c c c c}3&2\end{array}}$ \\ \hline 
$F_4$ & any & $F_4$ & \def\arraystretch{0.5} \arraycolsep=0pt $e_{\tiny\begin{array}{c c c c c c c}1&0&0&0\end{array}}+
e_{\tiny\begin{array}{c c c c c c c}0&1&0&0\end{array}}+
e_{\tiny\begin{array}{c c c c c c c}0&0&1&0\end{array}}+
e_{\tiny\begin{array}{c c c c c c c}0&0&0&1\end{array}}$ \\
& $p=2$ & $F_4(a_2)$ & \def\arraystretch{0.5} \arraycolsep=0pt $e_{\tiny\begin{array}{c c c c c c c}0&0&0&1\end{array}}+
e_{\tiny\begin{array}{c c c c c c c}0&0&1&1\end{array}}+
e_{\tiny\begin{array}{c c c c c c c}1&1&0&0\end{array}}+
e_{\tiny\begin{array}{c c c c c c c}0&1&2&0\end{array}}$ \\
& $p=2$ & $(C_3)_2$ & \def\arraystretch{0.5} \arraycolsep=0pt $e_{\tiny\begin{array}{c c c c c c c}0&0&0&1\end{array}}+
e_{\tiny\begin{array}{c c c c c c c}1&1&1&0\end{array}}+
e_{\tiny\begin{array}{c c c c c c c}0&1&2&0\end{array}}+
e_{\tiny\begin{array}{c c c c c c c}1&2&2&2\end{array}}$ \\
& $p=2$ & $(\tilde{A}_2 A_1)_2$ & \def\arraystretch{0.5} \arraycolsep=0pt $e_{\tiny\begin{array}{c c c c c c c}0&1&1&1\end{array}}+
e_{\tiny\begin{array}{c c c c c c c}1&1&2&1\end{array}}+
e_{\tiny\begin{array}{c c c c c c c}0&1&2&2\end{array}}+
e_{\tiny\begin{array}{c c c c c c c}1&2&2&0\end{array}}$ \\
\hline

$E_6$ & any & $E_6$ & \def\arraystretch{0.5} \arraycolsep=0pt $e_{\tiny\begin{array}{c c c c c c c}1&0&0&0&0\\&&0\end{array}}+
e_{\tiny\begin{array}{c c c c c c c}0&0&0&0&0\\&&1\end{array}}+
e_{\tiny\begin{array}{c c c c c c c}0&1&0&0&0\\&&0\end{array}}+
e_{\tiny\begin{array}{c c c c c c c}0&0&1&0&0\\&&0\end{array}}+
e_{\tiny\begin{array}{c c c c c c c}0&0&0&1&0\\&&0\end{array}}+
e_{\tiny\begin{array}{c c c c c c c}0&0&0&0&1\\&&0\end{array}}$ \\
& any & $E_6(a_1)$ & \def\arraystretch{0.5} \arraycolsep=0pt $e_{\tiny\begin{array}{c c c c c c c}1&0&0&0&0\\&&0\end{array}}+
e_{\tiny\begin{array}{c c c c c c c}0&1&0&0&0\\&&0\end{array}}+
e_{\tiny\begin{array}{c c c c c c c}0&0&0&1&0\\&&0\end{array}}+
e_{\tiny\begin{array}{c c c c c c c}0&0&0&0&1\\&&0\end{array}}+
e_{\tiny\begin{array}{c c c c c c c}0&0&1&0&0\\&&1\end{array}}+
e_{\tiny\begin{array}{c c c c c c c}0&1&1&0&0\\&&0\end{array}}$\\
& $p=2$ & $E_6(a_3)$ & \def\arraystretch{0.5} \arraycolsep=0pt $e_{\tiny\begin{array}{c c c c c c c}1&0&0&0&0\\&&0\end{array}}+
e_{\tiny\begin{array}{c c c c c c c}0&0&1&0&0\\&&1\end{array}}+
e_{\tiny\begin{array}{c c c c c c c}0&1&1&0&0\\&&0\end{array}}+
e_{\tiny\begin{array}{c c c c c c c}0&0&0&1&1\\&&0\end{array}}+
e_{\tiny\begin{array}{c c c c c c c}0&0&1&1&0\\&&1\end{array}}+
e_{\tiny\begin{array}{c c c c c c c}0&1&1&1&0\\&&1\end{array}}$\\

\hline
$E_7$ & any & $E_7$ & \def\arraystretch{0.5} \arraycolsep=0pt $e_{\tiny\begin{array}{c c c c c c c}1&0&0&0&0&0\\&&0\end{array}}+
e_{\tiny\begin{array}{c c c c c c c}0&0&0&0&0&0\\&&1\end{array}}+
e_{\tiny\begin{array}{c c c c c c c}0&1&0&0&0&0\\&&0\end{array}}+
e_{\tiny\begin{array}{c c c c c c c}0&0&1&0&0&0\\&&0\end{array}}+
e_{\tiny\begin{array}{c c c c c c c}0&0&0&1&0&0\\&&0\end{array}}+
e_{\tiny\begin{array}{c c c c c c c}0&0&0&0&1&0\\&&0\end{array}}+
e_{\tiny\begin{array}{c c c c c c c}0&0&0&0&0&1\\&&0\end{array}}$ \\
& any & $E_7(a_1)$ & \def\arraystretch{0.5} \arraycolsep=0pt $e_{\tiny\begin{array}{c c c c c c c}1&0&0&0&0&0\\&&0\end{array}}+
e_{\tiny\begin{array}{c c c c c c c}0&1&0&0&0&0\\&&0\end{array}}+
e_{\tiny\begin{array}{c c c c c c c}0&0&0&1&0&0\\&&0\end{array}}+
e_{\tiny\begin{array}{c c c c c c c}0&0&0&0&1&0\\&&0\end{array}}+
e_{\tiny\begin{array}{c c c c c c c}0&0&0&0&0&1\\&&0\end{array}}+
e_{\tiny\begin{array}{c c c c c c c}0&0&1&0&0&0\\&&1\end{array}}+
e_{\tiny\begin{array}{c c c c c c c}0&1&1&0&0&0\\&&0\end{array}}$ \\
& any & $E_7(a_2)$ & \def\arraystretch{0.5} \arraycolsep=0pt $e_{\tiny\begin{array}{c c c c c c c}1&0&0&0&0&0\\&&0\end{array}}+
e_{\tiny\begin{array}{c c c c c c c}0&0&0&0&0&0\\&&1\end{array}}+
e_{\tiny\begin{array}{c c c c c c c}0&1&0&0&0&0\\&&0\end{array}}+
e_{\tiny\begin{array}{c c c c c c c}0&0&1&0&0&0\\&&1\end{array}}+
e_{\tiny\begin{array}{c c c c c c c}0&0&1&1&0&0\\&&0\end{array}}+
e_{\tiny\begin{array}{c c c c c c c}0&0&0&1&1&0\\&&0\end{array}}+
e_{\tiny\begin{array}{c c c c c c c}0&0&0&0&1&1\\&&0\end{array}}$ \\
& $p=2$ & $E_7(a_3)$ & \def\arraystretch{0.5} \arraycolsep=0pt $e_{\tiny\begin{array}{c c c c c c c}1&0&0&0&0&0\\&&0\end{array}}+
e_{\tiny\begin{array}{c c c c c c c}0&0&0&0&0&1\\&&0\end{array}}+
e_{\tiny\begin{array}{c c c c c c c}0&0&1&0&0&0\\&&1\end{array}}+
e_{\tiny\begin{array}{c c c c c c c}0&1&1&0&0&0\\&&0\end{array}}+
e_{\tiny\begin{array}{c c c c c c c}0&0&0&1&1&0\\&&0\end{array}}+
e_{\tiny\begin{array}{c c c c c c c}0&0&1&1&0&0\\&&1\end{array}}+
e_{\tiny\begin{array}{c c c c c c c}0&1&1&1&0&0\\&&1\end{array}}$ \\
& $p=2$ & $(A_6)_2$ & \def\arraystretch{0.5} \arraycolsep=0pt $e_{\tiny\begin{array}{c c c c c c c}0&0&0&1&1&0\\&&0\end{array}}+
e_{\tiny\begin{array}{c c c c c c c}0&0&0&0&1&1\\&&0\end{array}}+
e_{\tiny\begin{array}{c c c c c c c}1&1&1&0&0&0\\&&0\end{array}}+
e_{\tiny\begin{array}{c c c c c c c}0&1&1&0&0&0\\&&1\end{array}}+
e_{\tiny\begin{array}{c c c c c c c}0&0&1&1&0&0\\&&1\end{array}}+
e_{\tiny\begin{array}{c c c c c c c}0&1&1&1&0&0\\&&0\end{array}}+
e_{\tiny\begin{array}{c c c c c c c}1&2&2&1&0&0\\&&1\end{array}}$ \\
\hline
$E_8$ & any & $E_8$ & \def\arraystretch{0.5} \arraycolsep=0pt $e_{\tiny\begin{array}{c c c c c c c c}1&0&0&0&0&0&0\\&&0\end{array}}+
e_{\tiny\begin{array}{c c c c c c c c}0&0&0&0&0&0&0\\&&1\end{array}}+
e_{\tiny\begin{array}{c c c c c c c c}0&1&0&0&0&0&0\\&&0\end{array}}+
e_{\tiny\begin{array}{c c c c c c c c}0&0&1&0&0&0&0\\&&0\end{array}}+
e_{\tiny\begin{array}{c c c c c c c c}0&0&0&1&0&0&0\\&&0\end{array}}+
e_{\tiny\begin{array}{c c c c c c c c}0&0&0&0&1&0&0\\&&0\end{array}}+
e_{\tiny\begin{array}{c c c c c c c c}0&0&0&0&0&1&0\\&&0\end{array}}+
e_{\tiny\begin{array}{c c c c c c c c}0&0&0&0&0&0&1\\&&0\end{array}}$ \\
& any & $E_8(a_1)$ & \def\arraystretch{0.5} \arraycolsep=0pt $e_{\tiny\begin{array}{c c c c c c c c}1&0&0&0&0&0&0\\&&0\end{array}}+
e_{\tiny\begin{array}{c c c c c c c c}0&0&0&0&0&0&0\\&&1\end{array}}+
e_{\tiny\begin{array}{c c c c c c c c}0&0&0&1&0&0&0\\&&0\end{array}}+
e_{\tiny\begin{array}{c c c c c c c c}0&0&0&0&1&0&0\\&&0\end{array}}+
e_{\tiny\begin{array}{c c c c c c c c}0&0&0&0&0&1&0\\&&0\end{array}}+
e_{\tiny\begin{array}{c c c c c c c c}0&0&0&0&0&0&1\\&&0\end{array}}+
e_{\tiny\begin{array}{c c c c c c c c}0&0&1&0&0&0&0\\&&1\end{array}}+
e_{\tiny\begin{array}{c c c c c c c c}0&1&1&0&0&0&0\\&&0\end{array}}$ \\
& any & $E_8(a_2)$ & \def\arraystretch{0.5} \arraycolsep=0pt $e_{\tiny\begin{array}{c c c c c c c c}1&0&0&0&0&0&0\\&&0\end{array}}+
e_{\tiny\begin{array}{c c c c c c c c}0&0&0&0&0&0&0\\&&1\end{array}}+
e_{\tiny\begin{array}{c c c c c c c c}0&1&0&0&0&0&0\\&&0\end{array}}+
e_{\tiny\begin{array}{c c c c c c c c}0&0&0&0&0&0&1\\&&0\end{array}}+
e_{\tiny\begin{array}{c c c c c c c c}0&0&1&0&0&0&0\\&&1\end{array}}+
e_{\tiny\begin{array}{c c c c c c c c}0&0&1&1&0&0&0\\&&0\end{array}}+
e_{\tiny\begin{array}{c c c c c c c c}0&0&0&1&1&0&0\\&&0\end{array}}+
e_{\tiny\begin{array}{c c c c c c c c}0&0&0&0&1&1&0\\&&0\end{array}}$ \\
& $p=2$ & $E_8(a_3)$ & \def\arraystretch{0.5} \arraycolsep=0pt $e_{\tiny\begin{array}{c c c c c c c c}0&0&0&0&0&1&0\\&&0\end{array}}+
e_{\tiny\begin{array}{c c c c c c c c}0&0&0&0&0&0&1\\&&0\end{array}}+
e_{\tiny\begin{array}{c c c c c c c c}1&1&0&0&0&0&0\\&&0\end{array}}+
e_{\tiny\begin{array}{c c c c c c c c}0&0&1&0&0&0&0\\&&1\end{array}}+
e_{\tiny\begin{array}{c c c c c c c c}0&1&1&0&0&0&0\\&&0\end{array}}+
e_{\tiny\begin{array}{c c c c c c c c}0&0&1&1&0&0&0\\&&0\end{array}}+
e_{\tiny\begin{array}{c c c c c c c c}0&0&0&1&1&0&0\\&&0\end{array}}+
e_{\tiny\begin{array}{c c c c c c c c}0&1&1&1&0&0&0\\&&0\end{array}}$ \\
& $p=2$ & $E_8(a_4)$ & \def\arraystretch{0.5} \arraycolsep=0pt $e_{\tiny\begin{array}{c c c c c c c c}1&1&0&0&0&0&0\\&&0\end{array}}+
e_{\tiny\begin{array}{c c c c c c c c}0&0&1&0&0&0&0\\&&1\end{array}}+
e_{\tiny\begin{array}{c c c c c c c c}0&1&1&0&0&0&0\\&&0\end{array}}+
e_{\tiny\begin{array}{c c c c c c c c}0&0&1&1&0&0&0\\&&0\end{array}}+
e_{\tiny\begin{array}{c c c c c c c c}0&0&0&1&1&0&0\\&&0\end{array}}+
e_{\tiny\begin{array}{c c c c c c c c}0&0&0&0&1&1&0\\&&0\end{array}}+
e_{\tiny\begin{array}{c c c c c c c c}0&0&0&0&0&1&1\\&&0\end{array}}+
e_{\tiny\begin{array}{c c c c c c c c}0&1&1&1&0&0&0\\&&0\end{array}}$ \\
& $p=2$ & $E_8(a_5)$ & \def\arraystretch{0.5} \arraycolsep=0pt $e_{\tiny\begin{array}{c c c c c c c c}1&1&0&0&0&0&0\\&&0\end{array}}+
e_{\tiny\begin{array}{c c c c c c c c}0&0&0&0&1&1&0\\&&0\end{array}}+
e_{\tiny\begin{array}{c c c c c c c c}0&0&0&0&0&1&1\\&&0\end{array}}+
e_{\tiny\begin{array}{c c c c c c c c}0&1&1&0&0&0&0\\&&1\end{array}}+
e_{\tiny\begin{array}{c c c c c c c c}0&0&1&1&0&0&0\\&&1\end{array}}+
e_{\tiny\begin{array}{c c c c c c c c}0&1&1&1&0&0&0\\&&0\end{array}}+
e_{\tiny\begin{array}{c c c c c c c c}0&0&1&1&1&0&0\\&&0\end{array}}+
e_{\tiny\begin{array}{c c c c c c c c}0&0&1&1&1&0&0\\&&1\end{array}}$\\
& $p=2$ & $(D_7(a_1))_2$ & \def\arraystretch{0.5} \arraycolsep=0pt $e_{\tiny\begin{array}{c c c c c c c c}0&0&0&1&0&0&0\\&&0\end{array}}+
e_{\tiny\begin{array}{c c c c c c c c}0&0&0&0&0&0&1\\&&0\end{array}}+
e_{\tiny\begin{array}{c c c c c c c c}1&1&0&0&0&0&0\\&&0\end{array}}+
e_{\tiny\begin{array}{c c c c c c c c}0&0&1&1&0&0&0\\&&0\end{array}}+
e_{\tiny\begin{array}{c c c c c c c c}0&0&0&0&0&1&1\\&&0\end{array}}+
e_{\tiny\begin{array}{c c c c c c c c}0&0&1&1&1&0&0\\&&1\end{array}}+
e_{\tiny\begin{array}{c c c c c c c c}0&1&1&1&1&0&0\\&&0\end{array}}+
e_{\tiny\begin{array}{c c c c c c c c}0&1&2&1&1&1&0\\&&1\end{array}}$ \\
& $p=2$ & $(D_7)_2$ & \def\arraystretch{0.5} \arraycolsep=0pt $e_{\tiny\begin{array}{c c c c c c c c}1&0&0&0&0&0&0\\&&0\end{array}}+
e_{\tiny\begin{array}{c c c c c c c c}0&1&1&0&0&0&0\\&&1\end{array}}+
e_{\tiny\begin{array}{c c c c c c c c}0&0&1&1&0&0&0\\&&1\end{array}}+
e_{\tiny\begin{array}{c c c c c c c c}0&1&1&1&0&0&0\\&&0\end{array}}+
e_{\tiny\begin{array}{c c c c c c c c}0&0&1&1&1&0&0\\&&0\end{array}}+
e_{\tiny\begin{array}{c c c c c c c c}0&0&0&1&1&1&0\\&&0\end{array}}+
e_{\tiny\begin{array}{c c c c c c c c}0&0&0&0&1&1&1\\&&0\end{array}}+
e_{\tiny\begin{array}{c c c c c c c c}1&1&1&1&1&1&1\\&&1\end{array}}$ \\
& $p=2$ & $(D_5 A_2)_2$ & \def\arraystretch{0.5} \arraycolsep=0pt $e_{\tiny\begin{array}{c c c c c c c c}0&0&0&0&0&1&1\\&&0\end{array}}+
e_{\tiny\begin{array}{c c c c c c c c}0&0&0&0&1&1&1\\&&0\end{array}}+
e_{\tiny\begin{array}{c c c c c c c c}1&1&1&1&0&0&0\\&&1\end{array}}+
e_{\tiny\begin{array}{c c c c c c c c}1&1&1&1&1&0&0\\&&0\end{array}}+
e_{\tiny\begin{array}{c c c c c c c c}0&1&2&1&0&0&0\\&&1\end{array}}+
e_{\tiny\begin{array}{c c c c c c c c}0&1&1&1&1&0&0\\&&1\end{array}}+
e_{\tiny\begin{array}{c c c c c c c c}0&0&1&1&1&1&0\\&&1\end{array}}+
e_{\tiny\begin{array}{c c c c c c c c}0&1&1&1&1&1&0\\&&0\end{array}}$ \\
& $p=3$ & $(A_7)_3$ & \def\arraystretch{0.5} \arraycolsep=0pt $e_{\tiny\begin{array}{c c c c c c c c}0&0&0&1&1&1&0\\&&0\end{array}}+
e_{\tiny\begin{array}{c c c c c c c c}0&0&0&0&1&1&1\\&&0\end{array}}+
e_{\tiny\begin{array}{c c c c c c c c}1&1&1&0&0&0&0\\&&1\end{array}}+
e_{\tiny\begin{array}{c c c c c c c c}1&1&1&1&0&0&0\\&&0\end{array}}+
e_{\tiny\begin{array}{c c c c c c c c}0&0&1&1&1&0&0\\&&1\end{array}}+
e_{\tiny\begin{array}{c c c c c c c c}0&1&1&1&1&0&0\\&&0\end{array}}+
e_{\tiny\begin{array}{c c c c c c c c}0&1&2&1&0&0&0\\&&1\end{array}}+
e_{\tiny\begin{array}{c c c c c c c c}0&0&1&1&1&1&1\\&&0\end{array}}$\\
\hline
\caption{Eminent nilpotent representatives in exceptional types}\label{t:exnil}\end{longtable}\end{center}
\def\arraystretch{\oldarraystretch} \arraycolsep=\oldarraycolsep

\begin{center}\begin{longtable}{|c|c|c|c|}\hline Type & Char. & Representation & Representative\\\hline
$A_n$ & any & $V(n+1)$ & $x_{\alpha_1}(1) \cdots x_{\alpha_n}(1)$\\
\hline

$B_n$; $n\geq 2$ & $p \neq 2$ & $V(2n+1)$ & $x_{\alpha_1}(1) \cdots x_{\alpha_n}(1)$\\
 & $p = 2$ & $V(2n) + R$ & $x_{\alpha_1}(1) \cdots x_{\alpha_n}(1)$\\

\hline
$C_n$; $n\geq 2$ & any & $V(2n)$ & $x_{\alpha_1}(1) \cdots x_{\alpha_n}(1)$\\
\hline
$D_n$; $n\geq 4$ & $p=2$ & $V(2n-2l+2) + V(2l-2)$ & $\left(\prod_{i=1}^{n-1}x_{\alpha_i}(1)\right) x_{\beta_l}(1)$, where $\beta_l = \alpha_{n} + \sum_{i=2l-n-1}^{n-2} \alpha_{i}$\\
& & $\frac{n+1}{2}<l < n$ & \\
\hline
\caption{Eminent unipotent representatives in classical types}\label{t:classunip}\end{longtable}\end{center}

\begin{center}\begin{table}[h!]\begin{tabular}[t]{|c|c|c|}\hline Type & Char. & Label \\\hline
$G_2$ & any & $G_2$  \\
& $p=3$ & $(\tilde{A}_1)_3$  \\ \hline 
$F_4$ & any & $F_4$ \\
& $p=2$ & $F_4(a_2)$  \\
& $p=2$ & $(\tilde{A}_2 A_1)_2$ \\
\hline
$E_6$ & any & $E_6$ \\
& any & $E_6(a_1)$  \\
& $p=2$ & $E_6(a_3)$  \\
\hline
$E_7$ & any & $E_7$  \\
& any & $E_7(a_1)$  \\
& any & $E_7(a_2)$  \\
& $p=2$ & $E_7(a_3)$ \\\hline \end{tabular}
\begin{tabular}[t]{|c|c|c|}\hline Type & Char. & Label \\\hline
$E_8$ & any & $E_8$ \\
& any & $E_8(a_1)$  \\
& any & $E_8(a_2)$  \\
& $p=2$ & $E_8(a_3)$  \\
& $p=2$ & $E_8(a_4)$  \\
& $p=2$ & $E_8(a_5)$  \\
& $p=2$ & $(D_5 A_2)_2$ \\
& $p=3$ & $(A_7)_3$ \\
\hline\end{tabular}
\caption{Eminent unipotent representatives in exceptional types}\label{t:exunip}
\end{table}\end{center}
It is often useful to have a means of recognising unipotent and nilpotent elements in exceptional types from easily computable invariants. For unipotent classes, this was accomplished by Lawther {\cite[Section 3]{L09}}. Let $C = C_{\Lie(G)}(x)$ for $x$ a unipotent or nilpotent element. We use the following abbreviations.

\hspace{1cm} JBS := List of Jordan block sizes

\hspace{1cm} DS := Dimensions of the terms in the derived series of $C$ 

\hspace{1cm} LS := Dimensions of the terms in the lower central series of $C$

\hspace{1cm} ALG := Dimension of the Lie algebra of derivations of $C$

\hspace{1cm} ALG' := Dimension of the derived subalgebra of the Lie algebra of derivations of $C$

\hspace{1cm} NIL := Dimension of the Nilradical of $C$

\hspace{1cm} NDS := Dimensions of the Lie normaliser of each term in the derived series of $C$

\begin{theorem}\label{p:nilprec} Let $G$ be an exceptional $k$-group. Then Table \ref{t:nilprec} gives a list of data which is sufficient to identify the class of any nilpotent or unipotent element of $G$.
\end{theorem}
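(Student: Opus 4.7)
The plan is to proceed by direct computation over each pair $(G,p)$ with $G$ a simple exceptional group and $p$ a characteristic. For each such pair we take a representative of every nilpotent orbit on $\Lie(G)$ and every unipotent class of $G$, using Tables \ref{t:exnil} and \ref{t:exunip} for the eminent classes and extending to the remaining classes via the reduction to proper subsystem subgroups of Section \ref{s:allclasses}. For each representative $x$ we compute $C = C_{\Lie(G)}(x)$ from the structure constants in a Chevalley basis, realising it as the kernel of $\ad(x)$ in the nilpotent case and of $\Ad(u)-1$ in the unipotent case. All of the invariants in the theorem then reduce to linear algebra on $C$: the Jordan block sizes by iterating $\ad(x)$ on the minimal and adjoint modules; DS and LS by repeated commutation; the nilradical and the derivation algebra $\Der(C)$ by standard radical computations and a linear system inside $\gl(C)$; and NDS by additional linear solves inside $\Lie(G)$. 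These computations are carried out in a computer algebra system.

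With the data tabulated, the theorem reduces to a finite separation check: within each $(G,p)$ block of Table \ref{t:nilprec}, no two distinct classes produce the same tuple of invariants. One verifies this mechanically by sorting rows lexicographically and inspecting for coincidences. The coarser invariants JBS, DS and LS suffice for the vast majority of classes; the refined invariants ALG, ALG$'$, NIL, and especially NDS are needed precisely for those collapsed Bala--Carter strata in bad characteristic (principally $p=2$ for $E_7$ and $E_8$, and $p=3$ for $E_8$) in which several classes -- for instance those with labels of the form $X$ and $X_p$ -- share the same adjoint Jordan type and hence the same JBS.

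The main obstacle is ensuring both correctness and completeness of this case analysis. For correctness we cross-check against data already in the literature: the centralizer dimensions $\dim C_G(x) = \dim C_{\Lie(G)}(x)$ in good characteristic from Liebeck--Seitz \cite{LS12}, and the Jordan block data on the minimal and adjoint modules from Lawther \cite{L09}. For completeness one must confirm, for every pair of distinct classes within a block, that some listed invariant actually differs; this is genuinely delicate in bad characteristic, and the precise choice of invariants appearing in Table \ref{t:nilprec} is dictated by exactly these stubborn pairs -- indeed NDS is a relatively expensive invariant that one only wants to invoke when strictly necessary. Once separation is verified in each $(G,p)$ block, the theorem follows.
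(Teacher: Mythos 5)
Your overall strategy --- compute the listed invariants for a representative of every class and verify, block by block, that no coincidences occur --- is exactly the paper's (the computations are carried out in GAP4 and \textsc{Magma}, and for unipotent classes the paper simply cites \cite[Section 3]{L09}). However, there are two genuine gaps in how you set the computation up. First, your source of representatives is circular: you propose to take them from Tables \ref{t:exnil} and \ref{t:exunip} together with the subsystem reduction of Section \ref{s:allclasses}, but the correctness of those tables is the content of Theorem \ref{nilreps}, whose proof in exceptional type \emph{uses} Theorem \ref{p:nilprec} to identify the classes of candidate elements; likewise, identifying the $G$-class of a distinguished element of a subsystem subalgebra is precisely the task Theorem \ref{p:nilprec} is designed for. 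The paper avoids this by taking the complete, independently verified list of labelled representatives from \cite{Ste16} (which rests on \cite{LS12}), not from its own tables.

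Second, you do not address the dependence of the invariants on the isogeny type of $G$. The theorem is stated for an arbitrary exceptional $k$-group, and for $E_6$ and $E_7$ the centraliser $C_{\Lie(G)}(x)$, and hence ALG, DS, and so on, genuinely differ between the simply connected and adjoint forms (see for instance the $E_6$, $p=3$ auxiliary table, where ALG is $31$ in the simply connected case and $24$ in the adjoint case, and the two separate tables for $E_7$, $p=2$). The paper handles this by computing in both forms and by proving Lemma \ref{lemma:exadsc}, which shows that the Jordan block sizes on $\Lie(G_{sc})$ and $\Lie(G_{ad})$ coincide (via $\Lie(G_{ad}) \cong \Lie(G_{sc})^*$ as $G_{sc}$-modules), so that JBS at least is isogeny-independent. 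Your proposal would need an analogous discussion to cover all isogeny types. Neither gap is fatal --- both are repaired by citing \cite{Ste16} for the representatives and adding the isogeny argument --- but as written the proof does not close.
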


\begin{center}\begin{table}\begin{tabular}[t]{|c|c|l|}
\hline
Type & Char. & Data for nilpotent orbits \\\hline
$G_2$ & $p \neq 3$ & JBS on $\Vm$\\
& $p=3$ & JBS on $\Lie(G)$\\
$F_4$ & $p=2$ & JBS on $\Vm$ and $\Lie(G)$; DS \\
& $p=3$ & JBS on $\Vm$ and $\Lie(G)$ \\
& $p \geq 5$ & JBS on $\Vm$ \\
$E_6$ & $p =2$ & JBS on $\Lie(G)$; DS \\
& $p=3$ & JBS on $\Lie(G)$; ALG \\
& $p \geq 5$ & JBS on $\Vm$\\
$E_7$ & $p=2$ & JBS on $\Lie(G)$; DS; ALG \\
& $p=3$ & JBS on $\Vm$ and $\Lie(G)$; DS \\
& $p \geq 5$ & JBS on $\Vm$\\
$E_8$ & $p=2$ & JBS on $\Lie(G)$; DS; ALG; ALG' \\
& $p=3$ & JBS on $\Lie(G)$; NDS; NIL \\ 
& $p=5$ & JBS on $\Lie(G)$; DS \\
& $p \geq 5$ & JBS on $\Lie(G)$ \\
\hline\end{tabular}
\begin{tabular}[t]{|c|c|l|}
\hline
Type & Char. & Data for unipotent classes \\\hline
$G_2$ & $p \neq 3$ & JBS on $\Vm$\\
& $p = 3$ & JBS on $\Lie(G)$ \\
$F_4$ & $p \leq 3$ & JBS on $\Vm$ and $\Lie(G)$\\
& $p \geq 5$ & JBS on $\Vm$\\
$E_6$ & $p \neq 3$ & JBS on $\Vm$\\
& $p = 3$ & JBS on $\Lie(G)$ \\
 $E_7$ & $p=2$ & JBS on $\Lie(G)$; LS \\
 & $p = 3$ & JBS on $\Vm$ and $\Lie(G)$ \\
 & $p \geq 5$ & JBS on $\Vm$\\
$E_8$ & $p=2$ & JBS on $\Lie(G)$; LS \\
& $p \geq 3$ & JBS on $\Lie(G)$ \\
 \hline\end{tabular}
\caption{Recognition data for nilpotent orbits and unipotent classes.}\label{t:nilprec}\end{table}\end{center}

\begin{remarks}(i) One important application of Theorem \ref{p:nilprec} is its use in the proof of Theorem \ref{nilreps}(i) for $G$ of exceptional type. To find the eminent nilpotent elements in $\Lie(G)$  the strategy is to list all maximal subsystem subalgebras up to conjugacy; for each $\m$ in that list, construct representatives of all distinguished orbits; and then use Theorem \ref{p:nilprec} to identify their class in $G$.

(ii) We supply auxiliary tables in Section \ref{s:auxtabs}; whenever at least two nilpotent orbits have the same Jordan block structure on $\Vm$ (when it exists) and $\text{Lie}(G)$ we list them in a table along with the data required to distinguish them, using the same abbreviations given before Proposition \ref{p:nilprec}.  
\end{remarks}

\section{Preliminaries} \label{s:prelims}
\subsection{Notation}\label{s:notation} Throughout $G$ is a reductive $k$-group, and most often $G$ is simple. Fix a Borel subgroup $B\leq G$ containing a maximal torus $T$. This defines a root system $\Phi$ with base $\Delta=\{\alpha_1,\dots,\alpha_l\}$ of simple roots generating the positive roots $\Phi^+$ as positive integral sums. We use the Bourbaki ordering for a Dynkin diagram a corresponding roots and follow \cite{Bourb82} in denoting roots by a string of numbers in the form of a Dynkin diagram corresponding to coefficients of a root expressed in terms of the simple roots. For example, the highest short root of an $F_4$ system is denoted $1232$.

The adjoint action of $G$ on $\Lie(G)$ defines a Cartan decomposition $\Lie(G)\cong \Lie(T)\oplus\bigoplus_{\alpha\in\Phi}ke_\alpha$. Corresponding to the root spaces $ke_\alpha$ one has the one-parameter subgroups $U_\alpha<G$, where each $U_\alpha\cong \mathbb{G}_a$ is a root groups.   In case $G$ is simple and simply connected, we may insist that the $e_\alpha$ form a Chevalley basis; usually, this is only recognised as canonical up to a choice of the sign of a coefficient $\lambda$ for certain triples of roots $(\alpha,\beta,\gamma)$ such that $[e_\alpha,e_\beta]=\lambda e_\gamma$. These choices determine the structure constants for $\Lie(G)$ \cite[Proposition 4.2.2]{Car89}. Through exponentiation one can fix an isomorphism $ke_\alpha\to U_\alpha$ in which $ce_\alpha\mapsto x_\alpha(c)$ for all $c\in k$.

The computations done in Section \ref{s:proofrecs} were performed with GAP4 \cite{GAP} in the simply connected case, using the representatives for the nilpotent orbits given in \cite{Ste16} (see the latest arXiv version for a correction in type $F_4$). In the adjoint case we used \textsc{Magma} \cite{Magma}. Although \textsc{Magma} uses a different choice of signs in the structure constants compared to GAP4, the same representatives can still be used and this is justified as follows.

All representatives listed in \cite{Ste16} are of the form $e = \sum_{i = 1}^t e_{\beta_i}$, where $\beta_1, \ldots, \beta_t \in \Phi^+$ are $\Z$-independent roots. Thus for any choice of signs $c_1, \ldots, c_t \in \{1,-1\}$ there exists a semisimple element in $G$ that conjugates $e$ to the nilpotent element $\sum_{i = 1}^t c_i e_{\beta_i}$; see for example \cite[Lemma 16.2C]{HumphreysGroupBook}. In particular, the representatives in \cite{Ste16} do not depend on the signs of structure constants in the Chevalley basis of $\Lie(G)$.

\subsection{Eminent elements and generalised subsystem subgroups and subalgebras} 
Recall that a subgroup $M$ of $G$ is a \emph{subsystem subgroup} if it is semisimple (the definition of which we assume to include connected) and normalised by a maximal torus.\footnote{Unless $G$ has a factor with root system $\Phi$ such that the pair $(\Phi,p)$ is $(B_n,2)$, $(C_n,2)$, $(F_4,2)$ or $(G_2,3)$, this implies it has a symmetric root system which is closed under sums. In any case, the relevant root systems can be obtained from the Dynkin diagram using a version of the Borel--de Siebenthal algorithm; see \cite[Prop.~13.15]{MT}.} Note that the derived subgroup of any Levi subgroup is a subsystem subgroup.  Define also a \emph{subsystem subalgebra} to be a subalgebra $\m\leq \Lie(G)$ with $\m=\Lie(M)$ for some subsystem subgroup $M$. 

Define a \emph{generalised subsystem subgroup} as follows. If $M$ is a subsystem subgroup of $G$ containing some simple factors $M_1,\dots,M_t$ of type $D_{r_1},\dots,D_{r_t}$ where $t\geq 0$, then there is a semisimple subgroup $M_{i,l}$ of $M_i$ of type $B_{l}B_{r_i-l-1}$ with $0\leq l\leq r_i-1$ (see the proof of Lemma \ref{l:maxgensubsys}). Then the generalised subsystem subgroups are obtained by replacing some subset of the $M_i$ with the subgroups $M_{i,l}$. Accordingly, we define a \emph{generalised subsystem subalgebra} $\m$ to be $\Lie(M)$ for a generalised subsystem subgroup $M$.

Recall that a unipotent or nilpotent element $x$ is \emph{distinguished} if it is contained in no proper Levi of $G$. The following definition is more restrictive.

\begin{defn}
A unipotent element $u \in G$ is called {\em eminent} if it is not contained in any proper generalised subsystem subgroup of $G$. A nilpotent element $e \in \Lie(G)$ is called {\em eminent} if it is not contained in any proper generalised subsystem subalgebra of $\Lie(G)$. 
\end{defn}

For use in the proof of Theorem \ref{nilreps} we record the conjugacy classes of maximal generalised subsystem subgroups which are not Levi subgroups. The maximal generalised subsystem subalgebras follow immediately.  

\begin{lemma} \label{l:maxgensubsys}
Suppose that $G$ is a classical group. Let $M$ be a generalised subsystem subgroup which is maximal amongst proper ones, such that $M$ is not a Levi subgroup. Then $M$ is conjugate to precisely one subgroup with root system as given in the following table. Moreover, every subgroup in the table is a maximal generalised subsystem subgroup.  
\end{lemma}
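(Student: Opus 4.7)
The plan is to carry out a case-by-case analysis for each classical type, building the list from Borel--de Siebenthal and then separately accounting for the extra subgroups arising from the $D_r \to B_l B_{r-l-1}$ generalisation procedure.

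First, for each simple type I would enumerate the $G$-conjugacy classes of maximal proper connected subsystem subgroups $M$ by applying Borel--de Siebenthal to the extended Dynkin diagram of $G$: removing a single node produces the candidates, and the resulting root system together with how it sits in $\Phi$ is read off directly. I would then discard those conjugate to Levi subgroups (the Levi subsystems being obtained by deleting a node of the ordinary Dynkin diagram). For $A_n$ every maximal subsystem subgroup is Levi and $A_n$ has no $D$-factor, so no entries arise. For $B_n$ the non-Levi candidates have the shape $D_k B_{n-k}$; for $C_n$ they take the form $C_k C_{n-k}$; and for $D_n$ they are the subgroups of type $D_k D_{n-k}$, the remaining Borel--de Siebenthal candidate $A_{n-1}$ being a Levi.

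Next I would treat the generalised subgroups obtained by $D$-factor replacement. If a non-Levi maximal subsystem subgroup $M$ already contains a $D$-factor (for example $D_k B_{n-k}$ in $B_n$, or $D_k D_{n-k}$ in $D_n$), any replacement yields a proper subgroup $M' \subsetneq M \subsetneq G$, so $M'$ is not maximal and contributes nothing new. The only genuinely new phenomenon occurs for $G = D_n$: taking $M = G$ and replacing its simple factor produces the semisimple subgroups of type $B_l B_{n-l-1}$ for $0 \leq l \leq n-1$, and these must be added to the table.

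The main obstacle is confirming that each $B_l B_{n-l-1} \leq D_n$ is genuinely maximal among proper generalised subsystem subgroups and that the listed subgroups are pairwise non-conjugate. For maximality I would argue on the natural $2n$-dimensional orthogonal module $V$: the subgroup $B_l B_{n-l-1}$ acts with exactly two irreducible non-degenerate orthogonal summands, of odd dimensions $2l+1$ and $2n-2l-1$, so it preserves no even-dimensional non-degenerate orthogonal decomposition and is therefore not contained in any $D_k D_{n-k}$; the Levi $A_{n-1}$ is ruled out because it stabilises a pair of maximal totally isotropic subspaces and hence cannot contain a subgroup acting irreducibly on an odd-dimensional orthogonal summand; a direct rank comparison then excludes containment in any other generalised subsystem subgroup of rank $n-1$. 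Non-conjugacy of the entries in each table is essentially bookkeeping: for the $B_l B_{n-l-1}$ family the parameters $l$ and $n-l-1$ yield the same subgroup of $D_n$, so the entries are indexed by $0 \leq l \leq \lfloor (n-1)/2 \rfloor$.
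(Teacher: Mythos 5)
There is a genuine gap: your enumeration of the maximal subsystem subgroups ignores the characteristic-dependence that is the whole point of several rows of Table \ref{t:maxgensubsys}. The paper's notion of subsystem subgroup is ``semisimple and normalised by a maximal torus'', and when $(\Phi,p)$ is $(B_n,2)$ or $(C_n,2)$ this class is strictly larger than what the classical Borel--de Siebenthal algorithm on closed subsystems produces: one must also include the $p$-closed, non-closed subsystems of \cite[Prop.~13.15(i),(ii)]{MT}. Concretely, for $p=2$ the groups $B_mB_{n-m}\leq B_n$ and $D_n\leq C_n$ are subsystem subgroups; your list omits them, yet they appear in the table, and their presence is also why the row $B_mD_{n-m}\leq B_n$ is asserted only for $p\neq 2$ (for $p=2$ it sits inside $B_mB_{n-m}$ and so is not maximal). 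Your case analysis ``$D_kB_{n-k}$ for $B_n$, $C_kC_{n-k}$ for $C_n$'' therefore does not reproduce the statement being proved.

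The same oversight breaks your treatment of $D_n$. You claim that all the subgroups $B_lB_{n-l-1}\leq D_n$ with $0\leq l\leq\lfloor(n-1)/2\rfloor$ must be added as maximal generalised subsystem subgroups, but for $p=2$ only $B_{n-1}$ survives: when $p=2$ the groups $B_lB_{n-l-1}$ with $l\geq 1$ are subsystem subgroups of $B_{n-1}$ and hence not maximal, which is exactly why the table lists $B_mB_{n-m-1}$ only for $p\neq 2$ and lists $B_{n-1}$ separately for all $p$. Your maximality argument for $B_lB_{n-l-1}$ is also only valid in odd characteristic (there these groups are connected stabilisers of decompositions of $V$ into non-degenerate odd-dimensional subspaces, which is essentially the paper's argument and gives maximality among all connected proper subgroups); the concluding ``direct rank comparison'' does not address containment in $B_{n-1}$, which is precisely the containment that occurs when $p=2$. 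The skeleton of your approach (Borel--de Siebenthal, discard Levis, then observe that the $D$-replacement only produces something new when applied to $G=D_n$ itself) matches the paper, but without the characteristic-$2$ analysis the proof does not establish the lemma as stated.
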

\pagebreak
\begin{center}\begin{longtable}{|c|c|c|c|}\hline $G$ & Char. & $M$ \\\hline
$A_n$ & & \\
\hline
$B_n$; $n\geq 2$ & $p \neq 2$ & $B_m D_{n-m}$; $1 \leq m \leq n$ \\
& $p=2$ & $B_m B_{n-m}$; $1 \leq m \leq \frac{n}{2}$ \\
& any & $D_n$ \\
\hline
$C_n$; $n\geq 2$ & any & $C_m C_{n-m}$; $1 \leq m \leq \frac{n}{2}$ \\
& $p = 2$ & $D_n$ \\
\hline
$D_n$; $n\geq 4$ & any & $D_m D_{n-m}$; $2 \leq m \leq \frac{n}{2}$ \\
& any & $B_{n-1}$ \\
 & $p \neq 2$ & $B_m B_{n-m-1}$; $1 \leq m \leq \frac{n}{2}$ \\
 \hline
\caption{Maximal generalised subsystem subgroups}\label{t:maxgensubsys}\end{longtable}\end{center}

\begin{proof}
For the subsystem subgroups this is a routine use of the Borel--de Siebenthal algorithm for closed subsystems \cite[Prop.~13.12]{MT} and \cite[Prop.~13.15(i),(ii)]{MT} for the $2$-closed non-closed subsystems. When $G$ has type $D_n$ further consideration is required. The definition of generalised subsystem subgroups immediately implies that they will not be maximal unless we replace $D_n$ itself with $B_{m}B_{n-m-1}$ with $0\leq m \leq n-1$. When $p=2$, the subgroups of type $B_{m}B_{n-m-1}$ with $1\leq m \leq n-1$ are contained in $B_{n-1}$ (indeed they are subsystem subgroups of $B_{n-1}$ since $p=2$) and so they are not maximal. We need to check that these subgroups are unique up to conjugacy and maximal amongst proper generalised subsystem subgroups. When $p \neq 2$, the subgroups of type $B_{m}B_{n-m-1}$ are the connected stabiliser of a decomposition of $V$ into non-degenerate subspaces of dimensions $2m+1$ and $2n-2m-1$ and thus are unique up to conjugacy since $G$ acts transitively on non-degenerate subspaces of dimension $k$. Moreover, this implies they are maximal amongst all connected proper subgroups of $G$ and thus amongst all proper generalised subsystem subgroups. When $p=2$, a subgroup of type $B_{n-1}$ is the stabiliser of a non-singular vector and is again unique up to conjugacy and maximal amongst all connected proper subgroups of $G$. 
\end{proof}

We observe that eminence is invariant under isogenies.

\begin{lemma} \label{l:isogeny}
Let $\phi:G\to H$ be an isogeny of reductive $k$-groups. 
\begin{enumerate}[\normalfont (i)]
\item The map $\phi$ induces a bijection between the unipotent varieties of $G$ and $H$, and the set of generalised subsystem subgroups of $G$ and $H$. 
\item If $\phi$ is a central isogeny then the map $d\phi$ induces a bijection between the nilpotent cones of $G$ and $H$, and the set of generalised subsystem subalgebras of $G$ and $H$.
\end{enumerate}
\end{lemma}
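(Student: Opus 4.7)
The plan is to leverage the standard fact that an isogeny $\phi: G \to H$ carries a maximal torus $T \leq G$ to a maximal torus $\phi(T) \leq H$ with kernel contained in $T$, and induces a compatible bijection between the root systems $\Phi(G,T) \to \Phi(H,\phi(T))$ together with isomorphisms $\phi|_{U_\alpha}: U_\alpha \to U_{\alpha'}$ of the corresponding root subgroups. Both (i) and (ii) should then reduce to this combinatorial picture combined with an analysis of how $\ker\phi$ and, for (ii), $\ker(d\phi)$ interact with unipotent or nilpotent elements.

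For (i), I would first handle the unipotent variety. Surjectivity of $\phi: \mathcal{U}_G \to \mathcal{U}_H$ follows by Jordan decomposition: for $v \in \mathcal{U}_H$ and any $g \in \phi^{-1}(v)$, the unipotent part $g_u$ satisfies $\phi(g_u) = v$. Injectivity uses that $\ker\phi$ is a finite subgroup scheme of $T$, hence consists of semisimple elements and contains no nontrivial unipotent; if $\phi(u_1) = \phi(u_2)$ with $u_1, u_2$ unipotent, then $u_1 u_2^{-1} \in \ker\phi$ is central and semisimple, and writing $u_1 = (u_1 u_2^{-1}) u_2$ as a product of commuting semisimple and unipotent factors, uniqueness of Jordan decomposition forces $u_1 u_2^{-1} = 1$. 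For the bijection on generalised subsystem subgroups, the point is that a subsystem subgroup $M \leq G$ normalised by $T$ is determined by (and generated by the root subgroups $U_\alpha$ for) a closed, or $2$-closed in the cases listed in the footnote, subsystem $\Psi \subseteq \Phi(G,T)$, and the $D_r$-to-$B_l B_{r-l-1}$ replacement in the definition is likewise a purely combinatorial datum. Hence $\phi(M)$ is the generalised subsystem subgroup of $H$ attached to $\phi(\Psi)$ with the same replacements; since the root-system identification is a bijection, so is $M \mapsto \phi(M)$ on generalised subsystem subgroups normalised by $T$ (respectively $\phi(T)$), and conjugacy of maximal tori yields the full bijection.

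For (ii), the centrality hypothesis gives $\ker(d\phi) \subseteq \Lie(Z(G)) \subseteq \Lie(T)$, a toral subspace containing no nonzero nilpotent elements; combined with the Jordan decomposition argument above, this yields injectivity of $d\phi$ on the nilpotent cone. For surjectivity, every nilpotent $f \in \Lie(H)$ lies in $\Lie(U')$ for some maximal unipotent $U' \leq H$; choose a maximal unipotent $U \leq G$ with $\phi(U) = U'$, and note that $\ker\phi \cap U = 1$ (semisimple meets unipotent), so $\phi|_U: U \to U'$ is an isomorphism of unipotent groups and $d\phi|_{\Lie(U)}$ is a linear isomorphism, producing the required preimage. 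The bijection on generalised subsystem subalgebras is then immediate from (i): a central isogeny restricts to a central isogeny $M \to \phi(M)$ on any generalised subsystem subgroup, so $d\phi$ carries $\Lie(M)$ onto $\Lie(\phi(M))$.

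The most delicate point is the interaction between $\ker(d\phi)$ and the nilpotent cone in bad characteristic, where $d\phi$ on all of $\Lie(G)$ may fail to be injective (for instance for $\mathrm{SL}_p \to \mathrm{PGL}_p$ in characteristic $p$); this is precisely where centrality is used to force $\ker(d\phi)$ to be toral, and it is the reason (ii) is not claimed for arbitrary isogenies. Everything else reduces to standard facts about root data under isogenies and about Jordan decomposition, so no substantial further difficulty is anticipated.
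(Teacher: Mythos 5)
Your proposal is correct, but it is substantially more self-contained than the paper's proof, which disposes of the bijections on unipotent varieties and nilpotent cones by citing \cite[Proposition~5.1.1]{Car93} and \cite[2.7]{Jan04}, and then handles the (generalised) subsystem subgroups in three lines: the kernel of $\phi$ is central, hence lies in every maximal torus, so $\phi$ is a bijection on maximal tori, so $M$ is a subsystem subgroup if and only if $\phi(M)$ is, and the generalised case follows from the definition. You instead reprove the cited facts directly --- Jordan decomposition for surjectivity and injectivity on the unipotent variety, torality of $\ker(d\phi)$ for injectivity on the nilpotent cone, and the isomorphism $\phi|_U\colon U\to U'$ on maximal unipotent subgroups for surjectivity --- and your treatment of the subsystem subgroups via the induced correspondence of root subgroups is a slightly more explicit version of the paper's maximal-torus argument. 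What your route buys is independence from the references and an explicit explanation of where centrality enters in (ii) (your $\mathrm{SL}_p\to\mathrm{PGL}_p$ example is exactly the right one); what it costs is length and one small imprecision: for a general isogeny in part (i) (e.g.\ a Frobenius or special isogeny) the scheme-theoretic kernel need not be a subgroup scheme of $T$, so your opening claim should be read at the level of $k$-points, where the kernel of an isogeny of a connected group is a finite normal, hence central, subgroup consisting of semisimple elements --- which is all your injectivity argument actually uses. With that reading, every step goes through.
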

\begin{proof}
The bijections of unipotent varieties and nilpotent cones follow from \cite[Proposition~5.1.1]{Car93} and \cite[2.7]{Jan04}. Since the kernel of $\phi$ is central, it is contained in any maximal torus; as $\phi$ is surjective with finite kernel it follows that $\phi$ is a bijection on maximal tori. Thus $M$ is a subsystem subgroup if and only if $\phi(M)$ is. If $M$ is a generalised subsystem subgroup, the result follows from the definition.
\end{proof}

\subsection{Unipotent classes and nilpotent orbits in classical groups} \label{s:classical}
Let $G$ be simple of classical type. In light of Lemma \ref{l:isogeny}, for the purposes of this paper it does no harm for us to assume that $G = \SL(V)$, $\Sp(V)$, or $\SO(V)$. 

We will choose forms preserved by $G$ on $V$ consistent with \cite{Jan73}. Let $V$ have $k$-basis $\{v_1, v_2, \ldots, v_m\}$. 
For $G=\Sp(V)$ and $m = 2n$, we choose a non-degenerate alternating bilinear form on $V$ defined by $(v_i, v_{n+i}) = 1 = -(v_{n+i}, v_i)$ and $(v_i, v_j) = 0$ for all other $i,j$. For $G=\SO(V)$ with $V$ of odd dimension $m = 2n+1$, choose a non-degenerate symmetric bilinear form on $V$ defined by $(v_i, v_{n+i}) = 1 = (v_{n+i}, v_i)$, $(v_{2n+1},v_{2n+1}) = 2$ and $(v_i, v_j) = 0$ for all other $i, j$. When $p=2$, we also define a quadratic form on $V$ by $Q(v_i) = 0$ for $i \leq 2n$, $Q(v_{2n+1})=1$, such that $Q$ has polarization $(-,-)$ on $V$. Finally, for $\SO(V)$ with $V$ of even dimension $m = 2n$ choose a non-degenerate symmetric bilinear form on $V$ defined by $(v_i, v_{n+i}) = 1 = (v_{n+i}, v_i)$ and $(v_i, v_j) = 0$ for all other $i, j$. And in this case for $p=2$, define a quadratic form on $V$ by $Q(v_i) = 0$ for all $i$, with polarization $(-,-)$ on $V$.

If $G = \SL(V)$, the class of a unipotent or nilpotent element $x \in G$ is determined by its Jordan block sizes on $V$. In what follows, we will describe the unipotent classes and nilpotent orbits for $G = \Sp(V)$ and $G = \SO(V)$. In this case for $x$ a unipotent element or a nilpotent element, we have a decomposition $V \downarrow k[x] = V_1 \perp \cdots \perp V_t$, where the $V_i$ are \emph{orthogonally indecomposable} $k[x]$-modules. Here orthogonally indecomposable means that if $V_i = W \perp W'$ as $k[x]$-modules, then $W = 0$ or $W' = 0$. Hesselink \cite[3.5]{Hesselink1979} determined the orthogonally indecomposable $k[x]$-modules that can occur, and thus classifed the unipotent classes and nilpotent orbits for $\Sp(V)$ and $\mathrm{O}(V)$. The possibilities for orthogonally indecomposable $k[x]$-modules are as follows:

\begin{itemize}
\item The $k[x]$-module $V(m)$ is non-degenerate of dimension $m$ and $x$ acts on $V(m)$ with a single Jordan block of size $m$. 
\item The $k[x]$-module $W(m)$ has dimension $2m$, with $W(m) = W_1 \oplus W_2$ where $W_i$ are totally singular of dimension $m$ and $x$ acts on each $W_i$ with a single Jordan block of size $m$. 
\item The $k[x]$-module $W_l(m)$ only occurs when $p=2$ and $x$ is a nilpotent element. It is non-degenerate of dimension $2m$ and $x$ acts with two Jordan blocks of size $m$. If $G = \Sp(V)$ then $0 < l < m/2$ and $l = \text{min}\{n : (x^{n+1}(v),x^{n}(v)) = 0 \text{ for all } v \in W_l(m)\}$. If $G = \SO(V)$ then $(m+1)/2 < l \leq m$ and $l = \text{min}\{n : Q(x^{n}(v)) = 0 \text{ for all } v \in W_l(m)\}$. 
\item The $k[x]$-module $D(m)$ only occurs when $G = \SO(V)$ with $\dim(V)$ odd, $p=2$ and $x$ is a nilpotent element. It is degenerate of dimension $2m - 1$ and $x$ acts with two Jordan blocks of sizes $m$ and $m-1$. 
\item The $k[x]$-module $R$ only occurs when $G = \SO(V)$ with $\dim(V)$ odd, $p=2$ and $x$ is a unipotent element. It is the $1$-dimensional radical of $V$ and so $x$ acts trivially on it. 
\end{itemize}

It is clear that the decomposition of $V \downarrow k[x]$ into orthogonally indecomposable summands determines the class of $x$. There are various ways of writing down the decomposition $V \downarrow k[x]$, we will use the \emph{distinguished normal form} introduced in \cite{LS12}. Since we are interested in the simple group $\SO(V)$, we state when a class in $\mathrm{O}(V)$ meets the subgroup $\SO(V)$ and if so, whether it splits into two $\SO(V)$-classes; see \cite[Lemma~3.11, Propositions~5.25, 6.22]{LS12}. The following lemma is a combination of results from \cite[Chs.~5.2, 5.3, 5.6, 6.2, 6.8]{LS12}.

\begin{lemma} \label{l:decomps}
Let $G = \Sp(V)$ or $\mathrm{O}(V)$ and let $x$ be unipotent or nilpotent element of $G$ or $\Lie(G)$, respectively. The following list states which orthogonal decompositions $V\downarrow k[x]$ can occur. Conversely, the multiset of orthogonal factors determines $x$ uniquely up to conjugacy. 

Furthermore, $x$ is distinguished if and only if $r=0$.
\begin{enumerate}[\normalfont (i)]
\item If $p \neq 2$, then
$$V \downarrow k[x] = \sum_{i=1}^r W(m_i)^{a_i} \oplus \sum_{j=1}^s V(n_j),$$ where the $n_j$ are distinct, and all are even (resp. odd) when $G = \Sp(V)$ (resp. $\mathrm{O}(V)$). For $G = \mathrm{O}(V)$ and $x$ unipotent, the class meets $\SO(V)$. For $G = \mathrm{O}(V)$, a class splits into two $\SO(V)$-classes if and only if $s=0$ and $a_i = 0$ for all odd $m_i$. 

\item If $x$ is unipotent, $p =2$ and $V$ is of even dimension, then 

$$V \downarrow k[x] = \sum_{i=1}^r W(m_i)^{a_i} \oplus \sum_{j=1}^t V(2n_j)^{b_j}$$ where the $n_j$ are distinct and $b_j \leq 2$ for all $j$. Such a class meets $\SO(V)$ if $\sum_j b_j$ is even and it splits if and only if $t=0$. 

\item If $x$ is unipotent, $p =2$ and $V$ is of odd dimension, then 
$$V \downarrow k[x] = \sum_{i=1}^r W(m_i)^{a_i} \oplus \sum_{j=1}^t V(2n_j)^{b_j} \oplus R$$ where the $n_j$ are distinct and $b_j \leq 2$.

\item If $x$ is nilpotent, $p =2$ and $G = \Sp(V)$, then 

 $$V \downarrow k[x] = \sum_{i=1}^r W(m_i)^{a_i} \oplus \sum_{j=1}^s W_{l_j}(n_j) \oplus \sum_{k=1}^{t} V(2q_k)^{b_k}$$ where the sequences $(n_j)$, $(l_j)$ and $(n_j-l_j)$ are strictly decreasing, $b_k \leq 2$ and for all $j,k$ either $q_k > n_j - l_j$ or $q_k < l_j$.

\item If $x$ is nilpotent, $p =2$ and $G = \mathrm{O}(V)$ with $V$ of even dimension, then  

$$V \downarrow k[x] = \sum_{i=1}^r W(m_i)^{a_i} \oplus \sum_{j=1}^s W_{l_j}(n_j)$$ where the sequences $(n_j)$, $(l_j)$ and $(n_j-l_j)$ are strictly decreasing. Each class splits into two $\SO(V)$-classes if and only if $s=0$.

\item If $x$ is nilpotent, $p=2$ and $V$ is of odd dimension, then

$$V \downarrow k[x] = \sum_{i=1}^r W(m_i)^{a_i} \oplus \sum_{j=1}^s W_{l_j}(n_j) \oplus D(m)$$ where the sequences $(n_j)$, $(l_j)$ and $(n_j-l_j)$ are strictly decreasing and $m < l_s$. 

\end{enumerate}

\end{lemma}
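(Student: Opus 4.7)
The plan is to deduce the six decomposition statements from Hesselink's classification \cite{Hesselink1979} of orthogonally indecomposable $k[x]$-modules carrying the bilinear (and, in characteristic~$2$, quadratic) form preserved by $G$, combined with a case-by-case analysis to pin down which multisets of indecomposables can occur and to decide when two multisets yield $G$-conjugate elements. First I would apply the general structure theory for modules with form to decompose $V\downarrow k[x] = V_1 \perp \cdots \perp V_t$ into orthogonally indecomposable summands, and then invoke Hesselink's theorem to classify each $V_i$ as isometric to one of $V(m), W(m), W_l(m), D(m), R$.

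Next, I would enumerate the combinatorial constraints appearing in (i)--(vi). Most of these record that certain direct sums of Hesselink-indecomposables are themselves isometric to other patterns and therefore do not produce new orbits; these constraints single out a canonical representative from each equivalence class of multisets. In characteristic~$2$, isometric identifications involving repeated $V(2n_j)$-summands force the multiplicity bounds $b_j \leq 2$, and identifications among $W_l(n)$-modules with repeated or nested parameters force the strict monotonicity of the sequences $(n_j), (l_j), (n_j-l_j)$ as well as the bound $m<l_s$ appearing in (vi). The individual identifications are carried out in the indicated chapters of \cite{LS12}, and my approach is to follow those arguments in each case. Uniqueness of the multiset (and hence of the $G$-conjugacy class) then follows by reading the data off from $x$-invariants: the Jordan block sizes on $V$ recover the parameters of the summands, and the $l$-parameters are recovered from the $x$-stable filtrations $\{v : (x^{n+1}v, x^n v) = 0\}$ (symplectic case) and $\{v : Q(x^n v) = 0\}$ (orthogonal case). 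The statements about when an $\mathrm{O}(V)$-class meets $\SO(V)$ and when it splits into two then follow from \cite[Lem.~3.11, Prop.~5.25, Prop.~6.22]{LS12}.

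For the distinguished characterisation, I would use that a proper Levi subgroup of $G = \Sp(V)$ or $\mathrm{O}(V)$ is of the form $\GL_{m_1} \times \cdots \times \GL_{m_s} \times G'$, where each $\GL_{m_i}$-factor acts as a pair of dual modules on complementary totally singular subspaces $W_{2i-1}, W_{2i}$ of dimension $m_i$. If $x$ lies in such a Levi, then restricting $x$ to $W_{2i-1} \oplus W_{2i}$ yields a sum of $W(m')$-modules corresponding to the Jordan blocks of $x$ on $W_{2i-1}$, giving $r > 0$. Conversely, a $W(m)$-summand in $V\downarrow k[x]$ exhibits a pair of complementary totally singular $x$-stable $m$-dimensional subspaces, placing $x$ in the corresponding $\GL_m \times G'$-Levi. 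The main obstacle will be the case analysis in characteristic~$2$, where the extra indecomposable types $W_l(m)$ and $D(m)$ produce many near-coincidences of isometric modules that must be sifted carefully; this bookkeeping is already performed in \cite{LS12}, so the present argument is principally one of collation and verification.
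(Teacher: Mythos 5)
Your proposal is correct and follows essentially the same route as the paper, which simply assembles the lemma from Hesselink's classification of orthogonally indecomposable $k[x]$-modules and the case analysis in \cite[Chs.~5.2, 5.3, 5.6, 6.2, 6.8]{LS12}, with the $\SO(V)$ splitting criteria taken from \cite[Lemma~3.11, Propositions~5.25, 6.22]{LS12}. Your added sketch of the ``distinguished if and only if $r=0$'' equivalence via the correspondence between $W(m)$-summands and $\GL_m$-factors of Levi subgroups is sound and makes explicit a point the paper leaves to the citation.
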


\begin{remark}\label{regularelts}Each simple $k$-group of classical type has a unique class of nilpotent and unipotent elements of highest dimension called \emph{regular}. A representative of the regular class is well-known to be $\prod_i x_{\alpha_i}(1)$ for unipotent elements \cite[3.2]{Steinberg65} and $\sum_i e_{\alpha_i}$ for nilpotent elements by \cite[5.9]{Springer66} and \cite{Keny87}, where the product and sum are supported on all simple roots. From \cite[3.3.6 and pp. 60--61]{LS12} we see that the regular elements are usually the only ones which have a Jordan block of largest possible size. In type $A$, this means that $x$ acts with a single Jordan block. For the other classical cases from the lemma, with $m = \dim(V)$:

\begin{enumerate}\item $V \downarrow k[x] = V(m-1) + V(1)$ if $G = \SO(V)$ and $m$ is even, otherwise $V \downarrow k[x] = V(m)$;
\item $V \downarrow k[x] = V(m)$ if $G = \Sp(V)$, and $V \downarrow k[x] = V(m-2) + V(2)$ if $G = \SO(V)$;
\item $V \downarrow k[x] = V(m-1) + R$;
\item $V \downarrow k[x] = V(m)$;
\item $V \downarrow k[x] = W_{m/2}(m/2)$;
\item $V \downarrow k[x] = D((m+1)/2)$. \end{enumerate}
\end{remark}

\subsection{Recovering all classes from the eminent ones} \label{s:allclasses}

We present a brief discussion of how one can find a full set of representatives of the unipotent classes and nilpotent orbits iteratively from a set of representatives of the eminent classes and orbits. (If $G$ is of exceptional type, one can simply look at the full list in \cite{Ste16}.) Since an element $x$ is eminent in some generalised subsystem it suffices to give a set of Chevalley generators for maximal generalised subsystem subgroups. The Borel-de Siebenthal algorithm solves the problem in the case of subsystems. Hence the following lemma completes the picture.

\begin{lemma}\label{lemma:typeDoddsystem}
Let $G$ be a simple $k$-group of type $D_n$ with $p\neq 2$. Let $\beta = \alpha_m + \alpha_{m+1} + \ldots + \alpha_{n-2}$ and define: 
$$ H_1 = \langle U_{\pm\alpha_1}, \ldots, U_{\pm\alpha_{m-1}}, x_{\beta+\alpha_{n-1}}(t)x_{\beta+\alpha_{n}}(-t), x_{-\beta-\alpha_{n-1}}(t)x_{-\beta-\alpha_{n}}(-t) \mid t \in k \rangle,$$ 
$$ H_2 = \langle U_{\pm\alpha_{m+1}}, \ldots, U_{\pm\alpha_{n-2}}, x_{\alpha_{n-1}}(t)x_{\alpha_{n}}(t), x_{-\alpha_{n-1}}(t)x_{-\alpha_{n}}(t) \mid t \in k \rangle.$$
Then the subgroups $H_1$ and $H_2$ commute, $H_1$ has type $B_m$ and $H_2$ has type $B_{n-m-1}$. 
\end{lemma}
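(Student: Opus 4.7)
My plan is to work concretely with $G = \SO(V)$, which is permissible by Lemma~\ref{l:isogeny}, using the hyperbolic basis $\{v_i,w_i\}_{i=1}^n$ from Section~\ref{s:classical} and the Bourbaki realisation of the $D_n$-roots in the standard coordinates, so that $\alpha_i = e_i - e_{i+1}$ for $i < n$ and $\alpha_n = e_{n-1} + e_n$. Telescoping, $\beta = e_m - e_{n-1}$, hence $\beta + \alpha_{n-1} = e_m - e_n$ and $\beta + \alpha_n = e_m + e_n$. The geometric picture driving the argument is the orthogonal decomposition $V = V_1 \perp V_2$, where
\[
V_1 = \la v_1,\dots,v_m,w_1,\dots,w_m, v_n - w_n \ra, \quad V_2 = \la v_{m+1},\dots,v_{n-1},w_{m+1},\dots,w_{n-1}, v_n + w_n\ra.
\]
Since $p \neq 2$, the vectors $v_n \pm w_n$ are anisotropic, so the summands are non-degenerate of dimensions $2m+1$ and $2n-2m-1$ respectively, and $\SO(V_1)$ has type $B_m$ while $\SO(V_2)$ has type $B_{n-m-1}$.

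The central step is a direct check on the natural module that each listed generator of $H_1$ fixes $V_2$ pointwise, and symmetrically each generator of $H_2$ fixes $V_1$ pointwise. For the non-diagonal generators $U_{\pm\alpha_i}$ this is immediate by inspection of the indices appearing in the corresponding root (the $\alpha_i$ for $i < m$ only move $v_1,\dots,v_m,w_1,\dots,w_m$, and the $\alpha_j$ for $m+1 \leq j \leq n-2$ only move $v_{m+1},\dots,v_{n-1},w_{m+1},\dots,w_{n-1}$). For the diagonal elements it reduces to two key cancellations: $x_{e_m-e_n}(t)$ sends $v_n \mapsto v_n + tv_m$ while $x_{e_m+e_n}(-t)$ sends $w_n \mapsto w_n - tv_m$, so the product fixes $v_n + w_n \in V_2$; symmetrically, $x_{\alpha_{n-1}}(s) x_{\alpha_n}(s)$ fixes $v_n - w_n \in V_1$. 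The particular sign choices $-t$ in $H_1$ versus $+t$ in $H_2$ are precisely what make these cancellations go through. This yields $H_1 \leq \SO(V_1)$ and $H_2 \leq \SO(V_2)$, whence $[H_1, H_2] = 1$ since $\SO(V_1)$ and $\SO(V_2)$ commute as pointwise stabilisers of complementary orthogonal summands.

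To upgrade the inclusion $H_1 \leq \SO(V_1)$ to equality, I would verify that the restriction of the generators of $H_1$ to $V_1$ furnishes a Chevalley generating set for $\SO(V_1)$: the $U_{\pm\alpha_i}$ ($1 \leq i \leq m-1$) give the long-root subgroups of an $A_{m-1}$ Levi on $\la v_1,\dots,v_m\ra \oplus \la w_1,\dots,w_m\ra$, corresponding to the simple roots $\alpha_1,\dots,\alpha_{m-1}$ of $B_m$, while the diagonal subgroup acts on $V_1$ as a short-root subgroup $U_{e_m}$ (one computes that it sends $v_n - w_n \mapsto v_n - w_n + 2tv_m$ with a compensating quadratic adjustment on $w_m$, the standard short-root action). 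Since $B_m$ is generated by its simple root subgroups and their opposites, $H_1 = \SO(V_1)$ has type $B_m$; the argument for $H_2$ is entirely symmetric, giving $H_2 = \SO(V_2)$ of type $B_{n-m-1}$.

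The main technical obstacle is the bookkeeping of signs: the two different sign patterns ($-t$ in $H_1$ vs.\ $+t$ in $H_2$) are non-obvious until one unpacks how the Chevalley commutator formulas interact with the specific choice of form on $V$ from Section~\ref{s:classical}, and one must be careful to use the matrix entries for each root group action consistently to see that the required $v_m$- and $v_{n-1}$-coefficients actually cancel.
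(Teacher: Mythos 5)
Your argument is correct, and it takes a genuinely different route from the paper: the paper's proof of this lemma is a one-line citation to Testerman \cite[pp.67--68]{Tes95}, ``with the generators amended to reflect our choice of structure constants''. You instead verify everything directly on the natural module, identifying $H_1$ and $H_2$ as subgroups of the pointwise stabilisers of complementary non-degenerate summands $V_1\perp V_2$ of dimensions $2m+1$ and $2n-2m-1$, and then checking that the restricted generators form a full set of $B$-type Chevalley generators (the $U_{\pm\alpha_i}$ giving the long simple roots $\varepsilon_i-\varepsilon_{i+1}$ and the diagonal one-parameter subgroup giving $U_{\pm\varepsilon_m}$, resp.\ $U_{\pm\varepsilon_{n-1}}$, complete with the quadratic term $w_m\mapsto w_m - t(v_n\pm w_n)-t^2v_m$ characteristic of a short root group in $\SO_{2m+1}$). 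This buys transparency --- one sees exactly \emph{why} the opposite sign patterns ($-t$ versus $+t$) are forced, namely so that the anisotropic line moved by $H_1$ is the one fixed by $H_2$ and vice versa --- at the cost of committing to an explicit matrix realisation; the citation buys brevity and defers the sign bookkeeping to \cite{Tes95}.

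One small caveat, which falls squarely under the sign-dependence you already flag: with the Chevalley basis the paper actually fixes later (Table \ref{chevbasson}, where $e_{\varepsilon_i+\varepsilon_j}=E_{j,n+i}-E_{i,n+j}$), one computes $x_{\varepsilon_m+\varepsilon_n}(-t)\colon w_n\mapsto w_n+tv_m$, so the $H_1$-generator fixes $v_n-w_n$ and moves $v_n+w_n$, while the $H_2$-generator does the opposite. Thus the roles of $v_n\pm w_n$ in your $V_1$ and $V_2$ are interchanged relative to that convention (your assignment corresponds to the opposite sign choice for $e_{\varepsilon_m+\varepsilon_n}$, which is an equally valid Chevalley basis). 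This swap does not affect any conclusion of the lemma; it only relabels which anisotropic vector completes which summand.
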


\begin{proof}
This is proved in \cite[pp.67--68]{Tes95}, with the generators amended to reflect our choice of stucture constants.
\end{proof}

If $p\neq 2$, then Theorem \ref{nilreps} implies that the unique eminent class of $x$ in type $B_m$ is regular. Then in view of the lemma, an eminent unipotent element in the subsystem subgroup $H:=H_1H_2$ can be represented by  $u = u_1u_2$, where \[u_1 = x_{\alpha_1}(1) \cdots x_{\alpha_{m-1}}(1) x_{\beta+\alpha_{n-1}}(1)x_{\beta+\alpha_{n}}(-1)\text{\quad and\quad}u_2 = x_{\alpha_{m+1}}(1) \cdots x_{\alpha_{n-2}}(1) x_{\alpha_{n-1}}(1)x_{\alpha_{n}}(1).\] Similarly, if $e$ is a regular nilpotent element of $\Lie(H)$, then $e$ is conjugate to $e_{\beta+\alpha_{n-1}}-e_{\beta+\alpha_{n}} + \sum_{i=1,i\neq m}^{n}( e_{\alpha_i})$. For even more representatives that work regardless of the choice of signs of the structure constants, see Remark \ref{remark:Duniprep}.

\subsection{Representatives for classical groups in small dimensions}

To illustrate our main theorem, Tables \ref{t:sp4reps}--\ref{t:so10reps} provide representatives for the unipotent classes and nilpotent orbits in classical algebraic groups of small dimension, as considered in \cite[Section 8]{LS12}.

Type $A_n$ is easily described. The class of $x$ corresponds to a partition $d_1 + \cdots + d_t = n+1$. Viewed as an element of $\SL_{n+1}$ and put in Jordan normal form, the elements of the superdiagonal that $x$ is supported on correspond to the simple roots that it is supported on. More specifically, a unipotent representative is $\prod_{k = 1}^n x_{\alpha_k}(\varepsilon_k)$, and a nilpotent representative is $\sum_{k = 1}^n \varepsilon_ke_{\alpha_k}$, where $\varepsilon_k = 1$ if $\sum_{j = 1}^{i-1} d_j < k < \sum_{j = 1}^i d_j$ for some $1 \leq i \leq t$, and $\varepsilon_k = 0$ otherwise. 

In the remaining explicit examples we give, there is an injective map from the set of unipotent classes to the set of nilpotent orbits and we have chosen representatives in such a way that a representative for a unipotent class is given by $\prod_{i = 1}^t x_{\beta_i}(1)$, where $\sum_{i = 1}^t e_{\beta_i}$ is the corresponding representative for the corresponding nilpotent orbit. Therefore we only list representatives for nilpotent orbits, apart from $\text{SO}_7(k)$ where the injection is not as natural. When $p \neq 2$, the characteristic is good for $G$, and so there is a Springer map, hence \emph{a fortiori} a bijection from the set of unipotent classes to the set of nilpotent orbits. When $p = 2$ and $G=\text{Sp}_{2n}$, a unipotent class is mapped to a nilpotent orbit such that the decompositions in Lemma \ref{l:decomps} (ii) and (iv) coincide. For a unipotent element of $\text{SO}_{2n}(k)$, the decomposition of the corresponding nilpotent element is described in \cite[6.3]{LS12}. In the tables for the orthogonal groups, some representatives have two rows of decompositions. The first row corresponds to the case where $p \neq 2$, and the second row corresponds to the case where $p=2$. 

\pagebreak
\begin{center}\begin{longtable}{|l|l|l|}
\hline
Representative $x$ & Char. & $V \downarrow k[x]$ \\
\hline 
$0$ & any & $W(1)^2$   \\
\def\arraystretch{0.5}\arraycolsep=0pt$e_{\tiny\begin{array}{c c}1&0\end{array}}$ & any & $W(2)$ \\
\def\arraystretch{0.5}\arraycolsep=0pt$e_{\tiny\begin{array}{c c}0&1\end{array}}$ & any & $W(1) + V(2)$ \\
\def\arraystretch{0.5}\arraycolsep=0pt$e_{\tiny\begin{array}{c c}0&1\end{array}} + e_{\tiny\begin{array}{c c}2&1\end{array}}$ & $p=2$ & $V(2)^2$ \\ 
\def\arraystretch{0.5}\arraycolsep=0pt$e_{\tiny\begin{array}{c c}1&0\end{array}} + e_{\tiny\begin{array}{c c}0&1\end{array}}$ & any & $V(4)$ \\
 \hline
 \caption{Representatives for nilpotent orbits and unipotent classes of $\text{Sp}_4(k)$. \label{t:sp4reps}}
\end{longtable}\end{center}


\begin{center}\begin{longtable}{|l|l|l|}
\hline
 Representative $x$ & Char. & $V \downarrow k[x]$ \\
\hline 
$0$ & any & $W(1)^3$ \\
\def\arraystretch{0.5}\arraycolsep=0pt$e_{\tiny\begin{array}{c c c}1&0&0\end{array}} + e_{\tiny\begin{array}{c c c}0&1&0\end{array}}$ & any & $W(3)$  \\
\def\arraystretch{0.5}\arraycolsep=0pt$e_{\tiny\begin{array}{c c c}1&0&0\end{array}}$ & any & $W(2) + W(1)$  \\
\def\arraystretch{0.5}\arraycolsep=0pt$e_{\tiny\begin{array}{c c c}1&0&0\end{array}} + e_{\tiny\begin{array}{c c c}0&0&1\end{array}}$ & any & $W(2) + V(2)$ \\
\def\arraystretch{0.5}\arraycolsep=0pt$e_{\tiny\begin{array}{c c c}0&1&0\end{array}} + e_{\tiny\begin{array}{c c c}0&0&1\end{array}}$ & any & $W(1) + V(4)$ \\
\def\arraystretch{0.5}\arraycolsep=0pt$e_{\tiny\begin{array}{c c c}0&0&1\end{array}} + e_{\tiny\begin{array}{c c c}2&2&1\end{array}}$ & $p=2$ & $W(1) + V(2)^2$ \\
\def\arraystretch{0.5}\arraycolsep=0pt$e_{\tiny\begin{array}{c c c}0&0&1\end{array}}$ & any & $W(1)^2 + V(2)$ \\
\def\arraystretch{0.5}\arraycolsep=0pt$e_{\tiny\begin{array}{c c c}0&1&0\end{array}} + e_{\tiny\begin{array}{c c c}0&0&1\end{array}} + e_{\tiny\begin{array}{c c c}2&2&1\end{array}}$ & any & $V(4) + V(2)$ \\
\def\arraystretch{0.5}\arraycolsep=0pt$e_{\tiny\begin{array}{c c c}1&0&0\end{array}} + e_{\tiny\begin{array}{c c c}0&1&0\end{array}} + e_{\tiny\begin{array}{c c c}0&0&1\end{array}}$ & any & $V(6)$  \\
\hline
\def\arraystretch{0.5}\arraycolsep=0pt$e_{\tiny\begin{array}{c c c}1&0&0\end{array}} + e_{\tiny\begin{array}{c c c}0&1&0\end{array}} + e_{\tiny\begin{array}{c c c}2&2&1\end{array}}$ (nilp. only) & $p=2$ & $W_1(3)$  \\ 
\hline
\caption{Representatives for nilpotent orbits and unipotent classes of $\text{Sp}_6(k)$. \label{t:sp6reps}}
\end{longtable}\end{center}


\begin{center}\begin{longtable}{|l|l|l|}
\hline
Representative $x$ & Char. & $V \downarrow k[x]$ \\
\hline 
 $0$ & any & $W(1)^4$ \\
\def\arraystretch{0.5}\arraycolsep=0pt$e_{\tiny\begin{array}{c c c c}1&0&0&0\end{array}} + e_{\tiny\begin{array}{c c c c}0&1&0&0\end{array}} + e_{\tiny\begin{array}{c c c c}0&0&1&0\end{array}}$
 & any & $W(4)$  \\

\def\arraystretch{0.5}\arraycolsep=0pt$e_{\tiny\begin{array}{c c c c}1&0&0&0\end{array}} + e_{\tiny\begin{array}{c c c c}0&1&0&0\end{array}}$ & any & $W(3) + W(1)$ \\ 

\def\arraystretch{0.5}\arraycolsep=0pt$e_{\tiny\begin{array}{c c c c}1&0&0&0\end{array}} + e_{\tiny\begin{array}{c c c c}0&1&0&0\end{array}} + e_{\tiny\begin{array}{c c c c}0&0&0&1\end{array}}$ & any & $W(3) + V(2)$ \\ 

\def\arraystretch{0.5}\arraycolsep=0pt$e_{\tiny\begin{array}{c c c c}1&0&0&0\end{array}} + e_{\tiny\begin{array}{c c c c}0&0&1&0\end{array}}$ & any & $W(2)^2$ \\

\def\arraystretch{0.5}\arraycolsep=0pt$e_{\tiny\begin{array}{c c c c}1&0&0&0\end{array}}$ & any & $W(2) + W(1)^2$ \\

\def\arraystretch{0.5}\arraycolsep=0pt$e_{\tiny\begin{array}{c c c c}1&0&0&0\end{array}} + e_{\tiny\begin{array}{c c c c}0&0&0&1\end{array}}$ & any & $W(2) + W(1) + V(2)$ \\

\def\arraystretch{0.5}\arraycolsep=0pt$e_{\tiny\begin{array}{c c c c}0&1&0&0\end{array}} + e_{\tiny\begin{array}{c c c c}0&0&0&1\end{array}} + e_{\tiny\begin{array}{c c c c}2&2&2&1\end{array}}$ & $p=2$ & $W(2) + V(2)^2$ \\
\endfirsthead
\def\arraystretch{0.5}\arraycolsep=0pt$e_{\tiny\begin{array}{c c c c}1&0&0&0\end{array}} + e_{\tiny\begin{array}{c c c c}0&0&1&0\end{array}} + e_{\tiny\begin{array}{c c c c}0&0&0&1\end{array}}$ & any & $W(2) + V(4)$ \\

\def\arraystretch{0.5}\arraycolsep=0pt$e_{\tiny\begin{array}{c c c c}0&0&0&1\end{array}}$ & any &
$W(1)^3 + V(2)$ \\

\def\arraystretch{0.5}\arraycolsep=0pt$e_{\tiny\begin{array}{c c c c}0&0&0&1\end{array}} + e_{\tiny\begin{array}{c c c c}2&2&2&1\end{array}}$ & $p=2$ & $W(1)^2 + V(2)^2$ \\

\def\arraystretch{0.5}\arraycolsep=0pt$e_{\tiny\begin{array}{c c c c}0&0&1&0\end{array}} + e_{\tiny\begin{array}{c c c c}0&0&0&1\end{array}}$ & any & $W(1)^2 + V(4)$ \\

\def\arraystretch{0.5}\arraycolsep=0pt$e_{\tiny\begin{array}{c c c c}0&0&1&0\end{array}} + e_{\tiny\begin{array}{c c c c}0&0&0&1\end{array}} + e_{\tiny\begin{array}{c c c c}2&2&2&1\end{array}}$ & any & $W(1) + V(4) + V(2)$ \\

\def\arraystretch{0.5}\arraycolsep=0pt$e_{\tiny\begin{array}{c c c c}0&1&0&0\end{array}} + e_{\tiny\begin{array}{c c c c}0&0&1&0\end{array}} + e_{\tiny\begin{array}{c c c c}0&0&0&1\end{array}}$ & any & $W(1) + V(6)$ \\

\def\arraystretch{0.5}\arraycolsep=0pt$e_{\tiny\begin{array}{c c c c}0&0&1&0\end{array}} + e_{\tiny\begin{array}{c c c c}0&0&0&1\end{array}} + e_{\tiny\begin{array}{c c c c}0&2&2&1\end{array}} + e_{\tiny\begin{array}{c c c c}2&2&2&1\end{array}}$ & $p=2$ & $V(4) + V(2)^2$ \\ 

\def\arraystretch{0.5}\arraycolsep=0pt$e_{\tiny\begin{array}{c c c c}1&0&0&0\end{array}} + e_{\tiny\begin{array}{c c c c}0&0&1&0\end{array}} + e_{\tiny\begin{array}{c c c c}0&0&0&1\end{array}} + e_{\tiny\begin{array}{c c c c}0&2&2&1\end{array}}$ & $p=2$ & $V(4)^2$ \\

\def\arraystretch{0.5}\arraycolsep=0pt$e_{\tiny\begin{array}{c c c c}0&1&0&0\end{array}} + e_{\tiny\begin{array}{c c c c}0&0&1&0\end{array}} + e_{\tiny\begin{array}{c c c c}0&0&0&1\end{array}} + e_{\tiny\begin{array}{c c c c}2&2&2&1\end{array}}$ & any & $V(6) + V(2)$ \\

\def\arraystretch{0.5}\arraycolsep=0pt$e_{\tiny\begin{array}{c c c c}1&0&0&0\end{array}} + e_{\tiny\begin{array}{c c c c}0&1&0&0\end{array}} + e_{\tiny\begin{array}{c c c c}0&0&1&0\end{array}} + e_{\tiny\begin{array}{c c c c}0&0&0&1\end{array}}$ & any & $V(8)$ \\
\hline
\def\arraystretch{0.5}\arraycolsep=0pt$e_{\tiny\begin{array}{c c c c}1&0&0&0\end{array}} + e_{\tiny\begin{array}{c c c c}0&1&0&0\end{array}} + e_{\tiny\begin{array}{c c c c}0&0&1&0\end{array}} + e_{\tiny\begin{array}{c c c c}2&2&2&1\end{array}}$ (nilp. only) & $p=2$ & $W_1(4)$ \\

\def\arraystretch{0.5}\arraycolsep=0pt$e_{\tiny\begin{array}{c c c c}0&1&0&0\end{array}} + e_{\tiny\begin{array}{c c c c}0&0&1&0\end{array}} + e_{\tiny\begin{array}{c c c c}0&2&2&1\end{array}}$ (nilp. only)  & $p=2$ & $W_1(3) + W(1)$ \\
 \hline
 \caption{Representatives for nilpotent orbits and unipotent classes of $\text{Sp}_8(k)$. \label{t:sp8reps}}
\end{longtable}\end{center}

\begin{center}\begin{longtable}{|l|l|l|}
\hline
 Representative $x$ & Char. & $V \downarrow k[u]$ \\
\hline 
$1$ & any & $W(1)^3 + V(1)$ \\&& $W(1)^3 + R$  \\
\def\arraystretch{0.5}\arraycolsep=0pt$x_{\tiny\begin{array}{c c c}1&0&0\end{array}}(1) x_{\tiny\begin{array}{c c c}0&1&0\end{array}}(1)$ & any & $W(3)+V(1)$ \\&& $W(3) + R$  \\
\def\arraystretch{0.5}\arraycolsep=0pt$x_{\tiny\begin{array}{c c c}1&0&0\end{array}}(1)$ & any & $W(2) + W(1)+V(1)$  \\&& $W(2) + W(1) + R$  \\
\def\arraystretch{0.5}\arraycolsep=0pt$x_{\tiny\begin{array}{c c c}1&0&0\end{array}}(1) x_{\tiny\begin{array}{c c c}0&0&1\end{array}}(1)$ & any & $W(2) + V(3)$ \\&& $W(2) + V(2) + R$ \\
\def\arraystretch{0.5}\arraycolsep=0pt$x_{\tiny\begin{array}{c c c}0&0&1\end{array}}(1)$ & any & $W(1)^2 + V(3)$  \\&& $W(1)^2 + V(2) + R$  \\
\def\arraystretch{0.5}\arraycolsep=0pt$x_{\tiny\begin{array}{c c c}0&1&0\end{array}}(1)  x_{\tiny\begin{array}{c c c}0&0&1\end{array}}(1)$ & any & $W(1) + V(5)$  \\&& $W(1) + V(4) + R$  \\
\def\arraystretch{0.5}\arraycolsep=0pt$x_{\tiny\begin{array}{c c c}1&0&0\end{array}}(1)  x_{\tiny\begin{array}{c c c}1&2&2\end{array}}(1)$ & $p=2$ & $W(1) + V(2)^2 + R$ \\
\def\arraystretch{0.5}\arraycolsep=0pt$x_{\tiny\begin{array}{c c c}1&0&0\end{array}}(1)  x_{\tiny\begin{array}{c c c}0&1&0\end{array}}(1)  x_{\tiny\begin{array}{c c c}0&1&2\end{array}}(1)$ & $p=2$ & $V(4) + V(2) + R$ \\
\def\arraystretch{0.5}\arraycolsep=0pt$x_{\tiny\begin{array}{c c c}1&0&0\end{array}}(1)  x_{\tiny\begin{array}{c c c}0&1&0\end{array}}(1)  x_{\tiny\begin{array}{c c c}0&0&1\end{array}}(1)$ & any & $V(7)$ \\&& $V(6) + R$  \\
\hline
\caption{Representatives for unipotent classes of $\text{SO}_7(k)$. \label{t:so7repsunip}}
\end{longtable}\end{center}

\begin{center}\begin{longtable}{|l|l|l|}
\hline
 Representative $x$ & Char. & $V \downarrow k[e]$ \\
\hline 
$0$ & any & $W(1)^3 + V(1)$  \\&& $W(1)^3 + D(1)$  \\
\def\arraystretch{0.5}\arraycolsep=0pt$e_{\tiny\begin{array}{c c c}1&0&0\end{array}} + e_{\tiny\begin{array}{c c c}0&1&0\end{array}}$ & any & $W(3)+V(1)$  \\&& $W(3) + D(1)$   \\
\def\arraystretch{0.5}\arraycolsep=0pt$e_{\tiny\begin{array}{c c c}1&0&0\end{array}}$ & any & $W(2) + W(1)+V(1)$  \\&& $W(2) + W(1) + D(1)$  \\
\def\arraystretch{0.5}\arraycolsep=0pt$e_{\tiny\begin{array}{c c c}1&0&0\end{array}} + e_{\tiny\begin{array}{c c c}0&0&1\end{array}}$ & any & $W(2) + V(3)$  \\&& $W(2) + D(2)$  \\
\def\arraystretch{0.5}\arraycolsep=0pt$e_{\tiny\begin{array}{c c c}0&0&1\end{array}}$ & any & $W(1)^2 + V(3)$ \\&& $W(1)^2 + D(2)$  \\
\def\arraystretch{0.5}\arraycolsep=0pt$e_{\tiny\begin{array}{c c c}0&1&0\end{array}} + e_{\tiny\begin{array}{c c c}0&0&1\end{array}}$ & any & $W(1) + V(5)$  \\&& $W(1) + D(3)$ \\
\def\arraystretch{0.5}\arraycolsep=0pt$e_{\tiny\begin{array}{c c c}1&0&0\end{array}} + e_{\tiny\begin{array}{c c c}1&2&2\end{array}}$ & $p=2$ & $W_2(2) + W(1) + D(1)$ \\

\def\arraystretch{0.5}\arraycolsep=0pt$e_{\tiny\begin{array}{c c c}1&0&0\end{array}} + e_{\tiny\begin{array}{c c c}0&1&0\end{array}} + e_{\tiny\begin{array}{c c c}0&1&2\end{array}}$ & $p=2$ & $W_3(3) + D(1)$ \\
\def\arraystretch{0.5}\arraycolsep=0pt$e_{\tiny\begin{array}{c c c}1&0&0\end{array}} + e_{\tiny\begin{array}{c c c}0&1&0\end{array}} + e_{\tiny\begin{array}{c c c}0&0&1\end{array}}$ & any & $V(7)$  \\&& $D(4)$   \\
\hline
\caption{Representatives for nilpotent orbits of $\text{SO}_7(k)$. \label{t:so7repsnil}}
\endfirsthead
\end{longtable}\end{center}

\begin{center}\begin{longtable}{|l|l|l|l|}
\hline
Representative $x$ & $V \downarrow k[e]$ & $V \downarrow k[u]$ \\
\hline 
 $0$ & $W(1)^4$ & $W(1)^4$ \\
\def\arraystretch{0.5}\arraycolsep=0pt$e_{\tiny\begin{array}{c c c }\multirow{2}{*}{1} & \multirow{2}{*}{0} & 0 \\ & & 0\end{array}} + e_{\tiny\begin{array}{c c c }\multirow{2}{*}{0} & \multirow{2}{*}{1} & 0 \\ & & 0\end{array}} + e_{\tiny\begin{array}{c c c }\multirow{2}{*}{0} & \multirow{2}{*}{0} & 1 \\ & & 0\end{array}}$
 &  $W(4)$ & $W(4)$ \\

\def\arraystretch{0.5}\arraycolsep=0pt$e_{\tiny\begin{array}{c c c }\multirow{2}{*}{1} & \multirow{2}{*}{0} & 0 \\ & & 0\end{array}} + e_{\tiny\begin{array}{c c c }\multirow{2}{*}{0} & \multirow{2}{*}{1} & 0 \\ & & 0\end{array}} + e_{\tiny\begin{array}{c c c }\multirow{2}{*}{0} & \multirow{2}{*}{0} & 0 \\ & & 1\end{array}}$
 &  $W(4)$ & $W(4)$ \\

 \def\arraystretch{0.5}\arraycolsep=0pt$e_{\tiny\begin{array}{c c c }\multirow{2}{*}{1} & \multirow{2}{*}{0} & 0 \\ & & 0\end{array}} + e_{\tiny\begin{array}{c c c }\multirow{2}{*}{0} & \multirow{2}{*}{1} & 0 \\ & & 0\end{array}}$ &  $W(3) + W(1)$ & $W(3) + W(1)$ \\ 

\def\arraystretch{0.5}\arraycolsep=0pt$e_{\tiny\begin{array}{c c c }\multirow{2}{*}{1} & \multirow{2}{*}{0} & 0 \\ & & 0\end{array}} + e_{\tiny\begin{array}{c c c }\multirow{2}{*}{0} & \multirow{2}{*}{0} & 1 \\ & & 0\end{array}}$ &  $W(2)^2$ & $W(2)^2$ \\

\def\arraystretch{0.5}\arraycolsep=0pt$e_{\tiny\begin{array}{c c c }\multirow{2}{*}{1} & \multirow{2}{*}{0} & 0 \\ & & 0\end{array}} + e_{\tiny\begin{array}{c c c }\multirow{2}{*}{0} & \multirow{2}{*}{0} & 0 \\ & & 1\end{array}}$ &  $W(2)^2$ & $W(2)^2$ \\

\def\arraystretch{0.5}\arraycolsep=0pt$e_{\tiny\begin{array}{c c c }\multirow{2}{*}{1} & \multirow{2}{*}{0} & 0 \\ & & 0\end{array}}$ & $W(2) + W(1)^2$ & $W(2) + W(1)^2$ \\

\def\arraystretch{0.5}\arraycolsep=0pt$e_{\tiny\begin{array}{c c c }\multirow{2}{*}{1} & \multirow{2}{*}{0} & 0 \\ & & 0\end{array}} + e_{\tiny\begin{array}{c c c }\multirow{2}{*}{0} & \multirow{2}{*}{0} & 1 \\ & & 0\end{array}} + e_{\tiny\begin{array}{c c c }\multirow{2}{*}{0} & \multirow{2}{*}{0} & 0 \\ & & 1\end{array}}$ & $W(2) + V(3) + V(1)$  & $W(2) + V(3) + V(1)$  \\&$W(2) + W_2(2)$ & $W(2) + V(2)^2$   \\

\def\arraystretch{0.5}\arraycolsep=0pt$e_{\tiny\begin{array}{c c c }\multirow{2}{*}{0} & \multirow{2}{*}{0} & 1 \\ & & 0\end{array}} + e_{\tiny\begin{array}{c c c }\multirow{2}{*}{0} & \multirow{2}{*}{0} & 0 \\ & & 1\end{array}}$ & 
$W(1)^2 + V(3) + V(1)$  & $W(1)^2 + V(3) + V(1)$  \\&$W(1)^2 + W_2(2)$ & $W(1)^2 + V(2)^2$  \\

\def\arraystretch{0.5}\arraycolsep=0pt$e_{\tiny\begin{array}{c c c }\multirow{2}{*}{0} & \multirow{2}{*}{1} & 0 \\ & & 0\end{array}} + e_{\tiny\begin{array}{c c c }\multirow{2}{*}{0} & \multirow{2}{*}{0} & 1 \\ & & 0\end{array}} + e_{\tiny\begin{array}{c c c }\multirow{2}{*}{0} & \multirow{2}{*}{0} & 0 \\ & & 1\end{array}}$ & $W(1) + V(5) + V(1)$  & $W(1) + V(5) + V(1)$  \\&$W(1) + W_3(3)$ & $W(1) + V(4) + V(2)$  \\

\def\arraystretch{0.5}\arraycolsep=0pt$e_{\tiny\begin{array}{c c c }\multirow{2}{*}{1} & \multirow{2}{*}{0} & 0 \\ & & 0\end{array}} + e_{\tiny\begin{array}{c c c }\multirow{2}{*}{0} & \multirow{2}{*}{1} & 0 \\ & & 0\end{array}} + e_{\tiny\begin{array}{c c c }\multirow{2}{*}{0} & \multirow{2}{*}{0} & 1 \\ & & 0\end{array}} + e_{\tiny\begin{array}{c c c }\multirow{2}{*}{1} & \multirow{2}{*}{1} & 0 \\ & & 1\end{array}}$ & $V(5)+V(3)$  & $V(5)+V(3)$  \\& $W_3(4)$ & $V(4)^2$  \\

\def\arraystretch{0.5}\arraycolsep=0pt$e_{\tiny\begin{array}{c c c }\multirow{2}{*}{1} & \multirow{2}{*}{0} & 0 \\ & & 0\end{array}} + e_{\tiny\begin{array}{c c c }\multirow{2}{*}{0} & \multirow{2}{*}{1} & 0 \\ & & 0\end{array}} + e_{\tiny\begin{array}{c c c }\multirow{2}{*}{0} & \multirow{2}{*}{0} & 1 \\ & & 0\end{array}} + e_{\tiny\begin{array}{c c c }\multirow{2}{*}{0} & \multirow{2}{*}{0} & 0 \\ & & 1\end{array}}$ &  $V(7)+V(1)$  & $V(7)+V(1)$  \\&$W_4(4)$ & $V(6) + V(2)$  \\

\hline
\caption{Representatives for nilpotent orbits and unipotent classes of $\text{SO}_{8}(k)$. \label{t:so8reps}}
\end{longtable}\end{center}

\begin{center}\begin{longtable}{|l|l|l|}
\hline
Representative $x$ & $V \downarrow k[e]$ & $V \downarrow k[u]$ \\
\hline 
 $0$ & $W(1)^5$ & $W(1)^5$ \\

\def\arraystretch{0.5}\arraycolsep=0pt$e_{\tiny\begin{array}{c c c c }\multirow{2}{*}{1} & \multirow{2}{*}{0} & \multirow{2}{*}{0} & 0 \\ & & & 0\end{array}} + e_{\tiny\begin{array}{c c c c }\multirow{2}{*}{0} & \multirow{2}{*}{1} & \multirow{2}{*}{0} & 0 \\ & & & 0\end{array}} + e_{\tiny\begin{array}{c c c c }\multirow{2}{*}{0} & \multirow{2}{*}{0} & \multirow{2}{*}{1} & 0 \\ & & & 0\end{array}} + e_{\tiny\begin{array}{c c c c }\multirow{2}{*}{0} & \multirow{2}{*}{0} & \multirow{2}{*}{0} & 1 \\ & & & 0\end{array}}$
 & $W(5)$ & $W(5)$ \\

\def\arraystretch{0.5}\arraycolsep=0pt$e_{\tiny\begin{array}{c c c c }\multirow{2}{*}{1} & \multirow{2}{*}{0} & \multirow{2}{*}{0} & 0 \\ & & & 0\end{array}} + e_{\tiny\begin{array}{c c c c }\multirow{2}{*}{0} & \multirow{2}{*}{1} & \multirow{2}{*}{0} & 0 \\ & & & 0\end{array}} + e_{\tiny\begin{array}{c c c c }\multirow{2}{*}{0} & \multirow{2}{*}{0} & \multirow{2}{*}{1} & 0 \\ & & & 0\end{array}}$
 & $W(4) + W(1)$ & $W(4) + W(1)$ \\

\def\arraystretch{0.5}\arraycolsep=0pt$e_{\tiny\begin{array}{c c c c }\multirow{2}{*}{1} & \multirow{2}{*}{0} & \multirow{2}{*}{0} & 0 \\ & & & 0\end{array}} + e_{\tiny\begin{array}{c c c c }\multirow{2}{*}{0} & \multirow{2}{*}{1} & \multirow{2}{*}{0} & 0 \\ & & & 0\end{array}} + e_{\tiny\begin{array}{c c c c }\multirow{2}{*}{0} & \multirow{2}{*}{0} & \multirow{2}{*}{0} & 1 \\ & & & 0\end{array}}$ & $W(3) + W(2)$ & $W(3) + W(2)$ \\ 

\def\arraystretch{0.5}\arraycolsep=0pt$e_{\tiny\begin{array}{c c c c }\multirow{2}{*}{1} & \multirow{2}{*}{0} & \multirow{2}{*}{0} & 0 \\ & & & 0\end{array}} + e_{\tiny\begin{array}{c c c c }\multirow{2}{*}{0} & \multirow{2}{*}{1} & \multirow{2}{*}{0} & 0 \\ & & & 0\end{array}}$ & $W(3) + W(1)^2$ & $W(3) + W(1)^2$ \\ 

\def\arraystretch{0.5}\arraycolsep=0pt$e_{\tiny\begin{array}{c c c c }\multirow{2}{*}{1} & \multirow{2}{*}{0} & \multirow{2}{*}{0} & 0 \\ & & & 0\end{array}} + e_{\tiny\begin{array}{c c c c }\multirow{2}{*}{0} & \multirow{2}{*}{0} & \multirow{2}{*}{1} & 0 \\ & & & 0\end{array}}$  & $W(2)^2 + W(1)$ & $W(2)^2 + W(1)$ \\

\def\arraystretch{0.5}\arraycolsep=0pt$e_{\tiny\begin{array}{c c c c }\multirow{2}{*}{1} & \multirow{2}{*}{0} & \multirow{2}{*}{0} & 0 \\ & & & 0\end{array}}$  & $W(2) + W(1)^3$ & $W(2) + W(1)^3$ \\

\def\arraystretch{0.5}\arraycolsep=0pt$e_{\tiny\begin{array}{c c c c }\multirow{2}{*}{1} & \multirow{2}{*}{0} & \multirow{2}{*}{0} & 0 \\ & & & 0\end{array}} + e_{\tiny\begin{array}{c c c c }\multirow{2}{*}{0} & \multirow{2}{*}{1} & \multirow{2}{*}{0} & 0 \\ & & & 0\end{array}} + e_{\tiny\begin{array}{c c c c }\multirow{2}{*}{0} & \multirow{2}{*}{0} & \multirow{2}{*}{0} & 1 \\ & & & 0\end{array}} + e_{\tiny\begin{array}{c c c c }\multirow{2}{*}{0} & \multirow{2}{*}{0} & \multirow{2}{*}{0} & 0 \\ & & & 1\end{array}}$ & $W(3) + V(3) + V(1)$  & $W(3) + V(3) + V(1)$  \\&$W(3) + W_2(2)$ & $W(3) + V(2)^2$   \\

\def\arraystretch{0.5}\arraycolsep=0pt$e_{\tiny\begin{array}{c c c c }\multirow{2}{*}{1} & \multirow{2}{*}{0} & \multirow{2}{*}{0} & 0 \\ & & & 0\end{array}} + e_{\tiny\begin{array}{c c c c }\multirow{2}{*}{0} & \multirow{2}{*}{0} & \multirow{2}{*}{0} & 1 \\ & & & 0\end{array}} + e_{\tiny\begin{array}{c c c c }\multirow{2}{*}{0} & \multirow{2}{*}{0} & \multirow{2}{*}{0} & 0 \\ & & & 1\end{array}}$  & $W(2) + W(1) + V(3) + V(1)$  & $W(2) + W(1) + V(3) + V(1)$  \\&$W(2) + W(1) + W_2(2)$ & $W(2) + W(1) + V(2)^2$  \\

\def\arraystretch{0.5}\arraycolsep=0pt$e_{\tiny\begin{array}{c c c c }\multirow{2}{*}{0} & \multirow{2}{*}{0} & \multirow{2}{*}{0} & 1 \\ & & & 0\end{array}} + e_{\tiny\begin{array}{c c c c }\multirow{2}{*}{0} & \multirow{2}{*}{0} & \multirow{2}{*}{0} & 0 \\ & & & 1\end{array}}$ & $W(1)^3 + V(3) + V(1)$  & $W(1)^3 + V(3) + V(1)$  \\&$W(1)^3 + W_2(2)$ & $W(1)^3 + V(2)^2$  \\

\def\arraystretch{0.5}\arraycolsep=0pt$e_{\tiny\begin{array}{c c c c }\multirow{2}{*}{1} & \multirow{2}{*}{0} & \multirow{2}{*}{0} & 0 \\ & & & 0\end{array}} + e_{\tiny\begin{array}{c c c c }\multirow{2}{*}{0} & \multirow{2}{*}{0} & \multirow{2}{*}{1} & 0 \\ & & & 0\end{array}} + e_{\tiny\begin{array}{c c c c }\multirow{2}{*}{0} & \multirow{2}{*}{0} & \multirow{2}{*}{0} & 1 \\ & & & 0\end{array}} + e_{\tiny\begin{array}{c c c c }\multirow{2}{*}{0} & \multirow{2}{*}{0} & \multirow{2}{*}{0} & 0 \\ & & & 1\end{array}}$ & $W(2) + V(5) + V(1)$  & $W(2) + V(5) + V(1)$  \\&$W(2) + W_3(3)$ & $W(2) + V(4) + V(2)$  \\

\def\arraystretch{0.5}\arraycolsep=0pt$e_{\tiny\begin{array}{c c c c }\multirow{2}{*}{0} & \multirow{2}{*}{0} & \multirow{2}{*}{1} & 0 \\ & & & 0\end{array}} + e_{\tiny\begin{array}{c c c c }\multirow{2}{*}{0} & \multirow{2}{*}{0} & \multirow{2}{*}{0} & 1 \\ & & & 0\end{array}} + e_{\tiny\begin{array}{c c c c }\multirow{2}{*}{0} & \multirow{2}{*}{0} & \multirow{2}{*}{0} & 0 \\ & & & 1\end{array}}$ & $W(1)^2 + V(5) + V(1)$  & $W(1)^2 + V(5) + V(1)$  \\&$W(1)^2 + W_3(3)$ & $W(1)^2 + V(4) + V(2)$  \\

\def\arraystretch{0.5}\arraycolsep=0pt$e_{\tiny\begin{array}{c c c c }\multirow{2}{*}{0} & \multirow{2}{*}{1} & \multirow{2}{*}{0} & 0 \\ & & & 0\end{array}} + e_{\tiny\begin{array}{c c c c }\multirow{2}{*}{0} & \multirow{2}{*}{0} & \multirow{2}{*}{1} & 0 \\ & & & 0\end{array}} + e_{\tiny\begin{array}{c c c c }\multirow{2}{*}{0} & \multirow{2}{*}{0} & \multirow{2}{*}{0} & 1 \\ & & & 0\end{array}} + e_{\tiny\begin{array}{c c c c }\multirow{2}{*}{0} & \multirow{2}{*}{1} & \multirow{2}{*}{1} & 0 \\ & & & 1\end{array}}$ & $W(1) + V(5)+V(3)$  & $W(1) + V(5)+V(3)$  \\&$W(1) + W_3(4)$ & $W(1) + V(4)^2$  \\

\def\arraystretch{0.5}\arraycolsep=0pt$e_{\tiny\begin{array}{c c c c }\multirow{2}{*}{0} & \multirow{2}{*}{1} & \multirow{2}{*}{0} & 0 \\ & & & 0\end{array}} + e_{\tiny\begin{array}{c c c c }\multirow{2}{*}{0} & \multirow{2}{*}{0} & \multirow{2}{*}{1} & 0 \\ & & & 0\end{array}} + e_{\tiny\begin{array}{c c c c }\multirow{2}{*}{0} & \multirow{2}{*}{0} & \multirow{2}{*}{0} & 1 \\ & & & 0\end{array}} + e_{\tiny\begin{array}{c c c c }\multirow{2}{*}{0} & \multirow{2}{*}{0} & \multirow{2}{*}{0} & 0 \\ & & & 1\end{array}}$ & $W(1) + V(7)+V(1)$  & $W(1) + V(7)+V(1)$  \\&$W(1) + W_4(4)$ & $W(1) + V(6) + V(2)$  \\

\def\arraystretch{0.5}\arraycolsep=0pt$e_{\tiny\begin{array}{c c c c }\multirow{2}{*}{1} & \multirow{2}{*}{0} & \multirow{2}{*}{0} & 0 \\ & & & 0\end{array}} + e_{\tiny\begin{array}{c c c c }\multirow{2}{*}{0} & \multirow{2}{*}{1} & \multirow{2}{*}{0} & 0 \\ & & & 0\end{array}} + e_{\tiny\begin{array}{c c c c }\multirow{2}{*}{0} & \multirow{2}{*}{0} & \multirow{2}{*}{1} & 0 \\ & & & 0\end{array}} + e_{\tiny\begin{array}{c c c c }\multirow{2}{*}{0} & \multirow{2}{*}{0} & \multirow{2}{*}{0} & 1 \\ & & & 0\end{array}} + e_{\tiny\begin{array}{c c c c }\multirow{2}{*}{0} & \multirow{2}{*}{1} & \multirow{2}{*}{1} & 0 \\ & & & 1\end{array}}$ & $V(7)+V(3)$  & $V(7)+V(3)$  \\&$W_4(5)$ & $W(1) + V(6) + V(4)$  \\

\def\arraystretch{0.5}\arraycolsep=0pt$e_{\tiny\begin{array}{c c c c }\multirow{2}{*}{1} & \multirow{2}{*}{0} & \multirow{2}{*}{0} & 0 \\ & & & 0\end{array}} + e_{\tiny\begin{array}{c c c c }\multirow{2}{*}{0} & \multirow{2}{*}{1} & \multirow{2}{*}{0} & 0 \\ & & & 0\end{array}} + e_{\tiny\begin{array}{c c c c }\multirow{2}{*}{0} & \multirow{2}{*}{0} & \multirow{2}{*}{1} & 0 \\ & & & 0\end{array}} + e_{\tiny\begin{array}{c c c c }\multirow{2}{*}{0} & \multirow{2}{*}{0} & \multirow{2}{*}{0} & 1 \\ & & & 0\end{array}} + e_{\tiny\begin{array}{c c c c }\multirow{2}{*}{0} & \multirow{2}{*}{0} & \multirow{2}{*}{0} & 0 \\ & & & 1\end{array}}$ & $W(1) + V(9)+V(1)$  & $W(1) + V(9)+V(1)$  \\&$W_5(5)$ & $V(8) + V(2)$  \\

 \hline
 \caption{Representatives for nilpotent orbits and unipotent classes of $\text{SO}_{10}(k)$. \label{t:so10reps}}
\end{longtable}\end{center}

\section{Proof of Theorem \ref{p:nilprec}} \label{s:proofrecs}

For unipotent elements, the required result is given in \cite[Section 3]{L09}. We describe the calculations in the nilpotent case. First we need the following lemma, which shows that for the action of nilpotent elements on $\Lie(G)$, the coincidences of Jordan block sizes for nilpotent elements on different orbits are the same in the adjoint case and the simply connected case.

\begin{lemma}\label{lemma:exadsc}
Let $G_{sc}$ be a simply connected simple algebraic group over $k$ of exceptional type, and let $G_{ad}$ be the corresponding simple algebraic group of adjoint type. Furthermore, let $x$ be either a unipotent element of $G_{sc}$ or a nilpotent element of $\Lie(G_{sc})$. Then the Jordan block sizes of $x$ on $\Lie(G_{sc})$ and $\Lie(G_{ad})$ are the same.
\end{lemma}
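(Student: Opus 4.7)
For $G$ of type $G_2$, $F_4$, or $E_8$ the centre of $G_{sc}$ is trivial, so $G_{sc} = G_{ad}$ and there is nothing to prove. The plan is thus to reduce to the types $E_6$ and $E_7$. Let $\phi\colon G_{sc} \to G_{ad}$ denote the central isogeny, whose scheme-theoretic kernel $Z$ is $\mu_3$ or $\mu_2$ respectively. When $p \nmid |Z|$, the kernel $Z$ is étale and $d\phi$ is an isomorphism of the adjoint $G_{sc}$-modules, so the Jordan block sizes of $\ad(x)$ (or of $\mathrm{Ad}(x)$, in the unipotent case) on the two sides agree trivially. This leaves only the bad cases $(G,p) = (E_6,3)$ and $(E_7,2)$.

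In each of these cases $\Lie(Z)$ is a $1$-dimensional subspace contained in the centre of $\Lie(G_{sc})$, and by a dimension count $d\phi$ also has $1$-dimensional cokernel. Since $G_{sc}$ is simply connected simple, it has no non-trivial characters, so both $\ker d\phi$ and $\mathrm{coker}\,d\phi$ are trivial $G_{sc}$-modules; in particular $\ad(x)$ kills both. Writing $\m = \im(d\phi)$, one obtains short exact sequences of $k[\ad(x)]$-modules
\[ 0 \to \Lie(Z) \to \Lie(G_{sc}) \to \m \to 0 \quad\text{and}\quad 0 \to \m \to \Lie(G_{ad}) \to k \to 0. \]
Each of the Jordan partitions of $\ad(x)$ on $\Lie(G_{sc})$ and on $\Lie(G_{ad})$ is then obtained from the Jordan partition of $\ad(x)$ on $\m$ either by adjoining a new part of size $1$ or by enlarging one existing part by $1$, according as the corresponding extension does or does not split as a $k[\ad(x)]$-module.

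The plan is then to check, orbit by orbit in the two remaining bad cases, that these two extensions produce the same partition. I would carry this out computationally using the Chevalley-basis representatives of \cite{Ste16}, in GAP4 for $G_{sc}$ and in \textsc{Magma} for $G_{ad}$. The observation noted in Section \ref{s:notation} that those representatives are insensitive to the choice of signs in the structure constants makes it legitimate to feed the same symbolic data into both computer-algebra systems.

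The main difficulty I anticipate is that there is no obvious structural reason why the two non-split extensions must be ``parallel''; the equality of Jordan partitions has to be confirmed by direct calculation rather than by a uniform argument. The saving grace is that the number of nilpotent orbits (and unipotent classes) in $(E_6,3)$ and $(E_7,2)$ is small, so an exhaustive verification is a routine, if slightly lengthy, computer-algebra exercise.
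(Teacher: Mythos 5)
Your reduction to the cases $(E_6,p=3)$ and $(E_7,p=2)$ is correct, the two short exact sequences you write down are exactly the right objects, and your plan would succeed: an orbit-by-orbit computer check in the two remaining bad cases is entirely in the spirit of this paper and is feasible. However, the paper supplies precisely the structural reason you say is missing for why the two non-split extensions are ``parallel'', and thereby avoids any case-by-case verification. Namely, by \cite{Hog82} there are nonsplit sequences $0\to k\to\Lie(G_{sc})\to W\to 0$ and $0\to W\to\Lie(G_{ad})\to k\to 0$ with $W$ irreducible and self-dual; dualising the first gives a nonsplit sequence $0\to W\to\Lie(G_{sc})^*\to k\to 0$, and since $\Lie(G_{sc})$ is a Weyl module one has $\Ext^1_{G_{sc}}(k,W)\cong\Ext^1_{G_{sc}}(W,k)\cong k$, so the nonsplit extension of $k$ by $W$ is unique up to isomorphism of $G_{sc}$-modules. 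Hence $\Lie(G_{ad})\cong\Lie(G_{sc})^*$, and since any group or Lie algebra element has the same Jordan block sizes on a module and on its dual, the lemma follows uniformly for all classes and orbits (and also in good characteristic, where $\Lie(G_{sc})$ is irreducible and self-dual). One small imprecision in your sketch is worth noting: in the non-split case your dichotomy only tells you that \emph{some} part of the partition on $\m$ grows by one, not which part, so the dichotomy by itself cannot replace the computation you propose --- whereas the duality argument settles the question without ever comparing partitions.
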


\begin{proof}For types $G_2$, $F_4$ and $E_8$ we have $G_{sc} = G_{ad}$, so it suffices to consider the case where $G_{sc}$ is of type $E_6$ or $E_7$. In this case, we show that $\Lie(G_{ad}) \cong \Lie(G_{sc})^*$ as $G_{sc}$-modules, from which the lemma follows. If $\Lie(G_{sc})$ is simple, then $\Lie(G_{sc})$ is an irreducible Weyl module and thus $\Lie(G_{sc})^* \cong \Lie(G_{sc})$ and $\Lie(G_{ad}) \cong \Lie(G_{sc})$ by \cite[Lemma II.2.13 b)]{Jan03}. 

If $\Lie(G_{sc})$ is not simple ($p = 3$ for $E_6$ and $p = 2$ for $E_7$), then by \cite{Hog82} there is a nonsplit short exact sequence $$0 \rightarrow k \rightarrow \Lie(G_{sc}) \rightarrow W \rightarrow 0$$ of $G_{sc}$-modules, where $W$ is irreducible. Since $W$ is self-dual, it follows that we have a nonsplit short exact sequence $$0 \rightarrow W \rightarrow \Lie(G_{sc})^* \rightarrow k \rightarrow 0.$$ Because $\Lie(G_{sc})$ is a Weyl module with highest weight the highest root, by \cite[II.2.12 (4), Proposition II.2.14]{Jan03} we have $\Ext_{G_{sc}}^1(k, W) \cong \Ext_{G_{sc}}^1(W, k) \cong k$. Furthermore, the group $\Ext_{G_{sc}}^1(k, W)$ is a $k$-vector space, and equivalence classes of extensions which are scalar multiples of each other correspond to isomorphic $G_{sc}$-modules. It follows then from $\Ext_{G_{sc}}^1(k, W) \cong k$ that a nonsplit extension of $k$ by $W$ is unique up to isomorphism of $G_{sc}$-modules, and so must be isomorphic to $\Lie(G_{sc})^*$. 

By \cite{Hog82} there is a nonsplit short exact sequence $$0 \rightarrow W \rightarrow \Lie(G_{ad}) \rightarrow k \rightarrow 0$$ of $G_{sc}$-modules, so we conclude that $\Lie(G_{ad}) \cong \Lie(G_{sc})^*$ as $G_{sc}$-modules.\end{proof}

The representatives and tables of Jordan blocks on the minimal and adjoint modules is given in \cite{Ste16}. The remaining data is supplied via in-built functions in  GAP4 \cite{GAP} and \textsc{Magma} such as \texttt{LieDerivedSeries}. It is then straightforward to confirm there are no coincidences.

\section{Proof of Theorem \ref{nilreps}}

We first treat the case where $G$ is classical, making heavy use of Lemma \ref{l:decomps}. Throughout the section $x$ and $x'$ are unipotent elements of $G$ or nilpotent elements of $\text{Lie}(G)$. In light of Lemma \ref{l:isogeny}, we take $G$ to be $\text{SL}(V)$, $\text{Sp}(V)$, or $\text{SO}(V)$ accordingly with natural module $V$. The proof is completed in the following two lemmas. 

\begin{lemma}
Suppose that $x$ is not conjugate to an element in Table \ref{t:classnil} or Table \ref{t:classunip}. Then $x$ is not eminent. 
\end{lemma}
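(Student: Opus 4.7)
The plan is to take an element $x$ which is unipotent in $G$ or nilpotent in $\Lie(G)$ and not listed in Table~\ref{t:classnil} / Table~\ref{t:classunip}, and exhibit an explicit proper generalised subsystem subgroup (or subalgebra) of $G$ containing $x$. The main tool is Lemma~\ref{l:decomps}, which gives the orthogonal decomposition $V\downarrow k[x] = V_1 \perp \cdots \perp V_t$; the stabiliser in $G$ of such a decomposition is typically a proper subsystem subgroup. The tables of maximal (generalised) subsystem subgroups in Lemma~\ref{l:maxgensubsys} tell us which decompositions we need to rule out for eminence.

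First I would dispatch the non-distinguished case: if $x$ is not distinguished, then by the last clause of Lemma~\ref{l:decomps} there is at least one $W(m)$ summand in $V\downarrow k[x]$, so $x$ lies in the derived group of a proper Levi, which is a proper (generalised) subsystem subgroup. So we may assume $x$ is distinguished. Type $A_n$ is immediate: distinguished means a single Jordan block of size $n+1$, which is the regular element and appears in the table.

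Next, I would rule out distinguished $x$ whose decomposition has more than one orthogonal summand (or a summand of the wrong type) in each remaining classical type, by matching with Lemma~\ref{l:maxgensubsys}.
\begin{itemize}
\item For $G=\Sp(V)$ in odd characteristic with $V\downarrow k[x] = \bigoplus_j V(n_j)$ and $s\geq 2$: splitting off $V(n_1)$ gives a $C_{n_1/2}C_{(2n-n_1)/2}$ subsystem subgroup containing $x$; hence eminent forces $s=1$, giving the $V(2n)$ entry.
\item For $G=\Sp(V)$ in characteristic $2$, the same argument handles $V(2n_j)^{b_j}$ summands; additionally $\Sp(V)$ contains a $D_n$ subsystem subgroup (the kernel of the quadratic form restricted to the standard form), and any nilpotent $x$ all of whose orthogonal summands are of type $V(2q_k)^{b_k}$ or $W(m_i)$ lies in it. Hence eminent nilpotent $x$ must carry a $W_l(n)$ summand; the decomposition constraints in Lemma~\ref{l:decomps}(iv) then force a single $W_l(n)$ summand together with a fixed pattern of $V(2q_k)$'s, and one checks these are exactly the representatives in the $C_n$ rows of Table~\ref{t:classnil}.
\item For $G=\SO(V)$ of either parity, any $V(n_j)$ (or $V(2n_j)^{b_j}$, $D(m)$, $R$) splitting produces a $B_m D_{n-m}$, $D_mD_{n-m}$ or $B_mB_{n-m-1}$ subsystem subgroup containing $x$, all of which are proper generalised subsystem subgroups by Lemma~\ref{l:maxgensubsys}. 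This forces the remaining distinguished $x$ to have at most one orthogonal summand, and regularity considerations (Remark~\ref{regularelts}) take care of the usual regular class.
\end{itemize}

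Finally, for $G$ of type $D_n$ I need the extra generalised subsystem subgroup $B_{n-1}$. If $p\neq 2$, any distinguished $x$ in $\SO(V)$ (even-dimensional) lies in the stabiliser of a non-degenerate hyperplane, which is $B_{n-1}$, so no element is eminent---this matches the empty $D_n$ line in Table~\ref{t:classnil} for $p\neq 2$. If $p=2$, I would use the criterion that $x$ fails to be contained in $B_{n-1}$ if and only if some orthogonally indecomposable summand forces the quadratic form to take the value $1$ outside a prescribed locus, and check this against the $W_l(n)$ summand structure allowed by Lemma~\ref{l:decomps}(v): the eminent nilpotent cases reduce exactly to the single-summand $W_l(n)$ entries with $\tfrac{n+1}{2}<l<n$ appearing in the table, and the corresponding unipotent calculation (via Lemma~\ref{l:decomps}(ii) and \cite[6.3]{LS12}) yields the $V(2n-2l+2)+V(2l-2)$ entries of Table~\ref{t:classunip}.

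The main obstacle I expect is the $p=2$, type $D_n$ analysis: one has to verify for each surviving distinguished decomposition whether or not the element actually lies in some $B_mB_{n-m-1}$ (equivalently $B_{n-1}$, since for $p=2$ subgroups $B_mB_{n-m-1}$ are subsystem subgroups of $B_{n-1}$ by the proof of Lemma~\ref{l:maxgensubsys}). This will require working explicitly with the quadratic form on the orthogonally indecomposable summands $W(m)$, $W_l(n)$ and $V(2n_j)$ and using the classification of $D_n$-conjugacy in Lemma~\ref{l:decomps}(ii),(v) to pick out precisely the range $\tfrac{n+1}{2}<l<n$ listed in the tables.
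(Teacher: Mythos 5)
Your overall strategy coincides with the paper's: reduce to distinguished elements, read off the orthogonal decomposition $V\downarrow k[x]$ from Lemma \ref{l:decomps}, and place $x$ in the stabiliser of a coarser non-degenerate decomposition, i.e.\ in one of the subgroups of Lemma \ref{l:maxgensubsys}. The non-distinguished reduction, type $A_n$, and the $p\neq 2$ cases are fine. However, two of your steps contain genuine errors or gaps.

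First, your treatment of nilpotent elements of $\Sp(V)$ in characteristic $2$ goes wrong. The claim that any nilpotent $x$ all of whose summands are of type $V(2q_k)$ or $W(m_i)$ lies in the $D_n$ subsystem subalgebra is false: every nilpotent element of $\so_{2n}$ in characteristic $2$ acts on $V$ with Jordan blocks occurring in pairs (Lemma \ref{l:decomps}(v)), so for instance the regular nilpotent with $V\downarrow k[x]=V(2n)$ does not lie in $D_n$. Consequently the deduction that an eminent nilpotent element ``must carry a $W_l(n)$ summand'' fails (the regular class is eminent), and the claim that the constraints of Lemma \ref{l:decomps}(iv) force ``a single $W_l(n)$ summand together with a fixed pattern of $V(2q_k)$'s'' matching Table \ref{t:classnil} is also wrong: the table contains only the case where $W_l(n)$ is all of $V$, and decompositions such as $W_{l_1}(n_1)\oplus W_{l_2}(n_2)$ or $W_l(n_1)\oplus V(2q)$ are distinguished, not in the table, and not eminent. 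The correct (and simpler) argument is the paper's: any distinguished decomposition with at least two summands consists of non-degenerate pieces, so $x$ lies in a $C_mC_{n-m}$; the detour through $D_n$ is both unnecessary and unsound.

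Second, and more importantly, you never actually deal with the two cases for which the \emph{generalised} subsystem subgroup $B_{n-1}<D_n$ is indispensable, namely the regular unipotent class of $D_n$ in characteristic $2$ (decomposition $V(2n-2)+V(2)$, which has exactly two summands and is not in Table \ref{t:classunip}) and the nilpotent orbit with $V\downarrow k[x]=W_n(n)$ (a single summand, excluded from Table \ref{t:classnil} because $l=n$). You flag the $p=2$, type $D_n$ analysis as ``the main obstacle'' and propose a criterion involving values of the quadratic form, but do not carry it out; these are precisely the cases where splitting off a non-degenerate summand is unavailable and where the proof must identify $x$ with an explicit class of $B_{n-1}$ (the classes $V(2n-2)\oplus R$ and $D(n)$ respectively, using \cite[Sections 5.6, 6.8]{LS12}). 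As written, the proposal does not establish non-eminence for these elements, so the lemma is not proved.
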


\begin{proof}
If $x$ is not distinguished, then $x$ is contained in a Levi subgroup or a Levi subalgebra and thus is not eminent. Therefore in what follows, we will assume that $x$ is distinguished. This already completes the proof for $G = \SL(V)$, since the only distinguished elements are the regular ones. Suppose then that $G = \text{Sp}(V)$ or $G = \text{SO}(V)$. Comparing Tables \ref{t:classnil} and \ref{t:classunip} with Lemma \ref{l:decomps}, we can assume that $V \downarrow k[x]$ has at least two summands, and $r=0$ in the notation of Lemma \ref{l:decomps}. 

If $p \neq 2$, this means that $V \downarrow k[x] = \sum\limits_{j=1}^{s} V(n_j)$ with $s \geq 2$. It follows that $x$ is contained in the stabiliser of a non-trivial decomposition of $V$ into non-degenerate subspaces, and thus $x$ is contained in a proper generalised subsystem subgroup or subalgebra. 

For $p=2$, we must consider the unipotent and nilpotent cases separately. Suppose first that $x$ is unipotent. If $G$ is of type $C_n$, then the argument from the previous paragraph applies. 

If $G$ has type $D_n$, then by Lemma \ref{l:decomps}(ii) we have $V \downarrow k[x] = \sum\limits_{j=1}^{s} V(2n_j)$, where $s$ is even. When there are at least four summands, it follows that $x$ is contained in a maximal subsystem subgroup of type $D_m D_{n-m}$ (one may choose $m = n_1 + n_2$). For $s = 2$ there is one case remaining which is not in Table \ref{t:classunip}, namely $V \downarrow k[x] = V(2n-2) + V(2)$, corresponding to the regular unipotent class. It follows from the description of unipotent classes of $B_{n-1}$ in \cite[Section~6.8]{LS12} that $x$ is contained in the generalised subsystem subgroup of type $B_{n-1}$ (conjugate to the element $x'$ in $\SO(W)$ with $W \downarrow k[x'] = V(2n-2) \oplus R$). 

If $G$ has type $B_n$, then in the notation of Lemma \ref{l:decomps}(iii) we have $V \downarrow k[x] = V \downarrow k[x] = \sum\limits_{j=1}^{s} V(2n_j) + R$, where $s > 1$. Similarly to the previous cases, it follows that $x$ is contained in a subsystem subgroup of type $D_m B_{n-m-1}$ (again one may choose $m = n_1 + n_2$).

It remains to consider the case where $x$ is nilpotent. If $G$ has type $C_n$, then $V \downarrow k[x]$ has at least two summands, each of them isomorphic to $V(2q_k)$ or $W_{l_j}(n_j)$. Therefore $x$ is contained in the stabiliser of the non-trivial decomposition of $V$ into two non-degenerate subspaces, which is a subsystem subalgebra of type $C_m C_{n-m}$. If $G$ has type $D_n$, then $V \downarrow k[x]$ either has at least two summands isomorphic to $W_{l_j}(n_j)$, or is isomorphic to $W_n(n)$. In the first case, $x$ is contained in a subsystem subalgebra of type $D_{n_j} D_{n-n_j}$. In the second case, it follows from \cite[Section~5.6]{LS12} that $x$ is contained in the generalised subsystem subalgebra of type $B_{n-1}$ (conjugate to the element $x'$ in $\so(W)$ with $W \downarrow k[x'] = D(n)$). Finally if $G$ has type $B_n$, then $V \downarrow k[x]$ has at least one summand isomorphic to $W_{l_j}(n_j)$, and so $x$ is contained in a subsystem subalgebra of type $D_{n_j} B_{n-n_j}$.\end{proof}

\begin{lemma}
The elements in Tables \ref{t:classnil} and \ref{t:classunip} are eminent. 
\end{lemma}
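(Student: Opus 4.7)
My plan is to argue by cases according to the type of $G$, showing that for each representative $x$ in Tables~\ref{t:classnil} and~\ref{t:classunip} and each maximal generalised subsystem subgroup $M$ enumerated in Lemma~\ref{l:maxgensubsys}, the decomposition $V\downarrow k[x]$ forbids $x$ from belonging to $M$. The main ingredient is Lemma~\ref{l:decomps}: if $x\in M$, then the $M$-module $V$ restricts to an orthogonal sum respecting whichever decomposition $M$ preserves, and moreover each summand must itself appear in the corresponding list of Lemma~\ref{l:decomps} for the classical subgroup in question.

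Type $A_n$ is immediate: every proper maximal generalised subsystem subgroup is Levi, so the regular representative (being distinguished) is eminent. For the representatives whose $V\downarrow k[x]$ is orthogonally indecomposable---namely $V(2n+1)$ for $B_n$ with $p\neq2$, $D(n+1)$ for $B_n$ with $p=2$, $V(2n)$ for $C_n$, and $W_l(n)$ for $C_n$ or $D_n$ with $p=2$---the subgroups of type $B_mD_{n-m}$, $B_mB_{n-m}$, $C_mC_{n-m}$ and $D_mD_{n-m}$ are ruled out at once, since each such $M$ stabilises a nontrivial orthogonal decomposition $V=V_1\perp V_2$, forcing $V\downarrow k[x]$ to split accordingly. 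The ``non-splitting'' maximal generalised subsystem subgroups ($D_n<B_n$ in any characteristic, $D_n<C_n$ for $p=2$, and $B_{n-1}<D_n$) must be handled with the finer data of Lemma~\ref{l:decomps}: for $D_n<C_n$ with $p=2$, Lemma~\ref{l:decomps}(v) forbids a $V(2n)$-summand and constrains $W_l(n)$ to lie in the range $(n+1)/2<l\leq n$, which excludes $x$ with parameter $l<n/2$; similarly the classes $V(2n+1)$ and $D(n+1)$ do not meet $D_n<B_n$ by the orthogonal decomposition of the natural module of $D_n$ inside $V$ and the odd-dimensional radical obstruction.

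The genuinely orthogonally-reducible entries in the tables are the $D_n$, $p=2$ unipotent representatives with $V\downarrow k[x]=V(2n-2l+2)+V(2l-2)$ (with $(n+1)/2<l<n$); these require more care. Against $D_mD_{n-m}$, the dimensions of the two $k[x]$-summands match uniquely the dimensions of the $\SO$-natural modules of the factors only when $m=n-l+1$, but then on the factor $\SO(V_i)$ of dimension $2n-2l+2$ (respectively $2l-2$) the decomposition $V_i\downarrow k[x]=V(2n-2l+2)$ (resp.\ $V(2l-2)$) has $\sum b_j=1$, which is odd, so by Lemma~\ref{l:decomps}(ii) the resulting $\mathrm O$-class does not meet $\SO(V_i)$, contradicting $x\in D_m$. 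Against the generalised $B_{n-1}<D_n$ (the stabiliser of a non-singular vector), the argument is to note that for any $y\in B_{n-1}$ the module $V\downarrow k[y]$ is determined by the $k[y]$-structure on the natural $B_{n-1}$-module of dimension $2n-1$ via the prescription of \cite[Section~6.8]{LS12}, and the decomposition $V(2n-2l+2)+V(2l-2)$ never arises in this list when $l<n$.

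The main obstacle is the verification that $x\in B_{n-1}$ is impossible in the final case above: $B_{n-1}$ does not preserve an orthogonal decomposition of $V$, so one cannot simply invoke Lemma~\ref{l:decomps} for $\SO(V_1)\times\SO(V_2)$; instead one must translate between the $\SO_{2n-1}$-decomposition of the natural module for $B_{n-1}$ and the decomposition of $V$ as $k[x]$-module (along the lines of the proof of the preceding lemma, where the $W_n(n)\leftrightarrow D(n)$ correspondence was used). Once this translation is carried out, the constraint $l<n$ distinguishes the nilpotent $W_l(n)$ and its unipotent counterpart from any $B_{n-1}$-orbit, completing the proof.
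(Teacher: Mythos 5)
Your overall strategy is the same as the paper's: observe the representatives are distinguished, then run through the maximal generalised subsystem subgroups of Lemma~\ref{l:maxgensubsys} and use the orthogonal decompositions of Lemma~\ref{l:decomps} to exclude each one. Several of your individual steps are sound (the splitting subgroups, and your parity-of-$\sum b_j$ argument against $D_mD_{n-m}$ is a legitimate variant of the paper's ``at least four Jordan blocks'' count). But there is a genuine gap: the case you yourself flag as ``the main obstacle'' --- showing the type $D_n$, $p=2$ representatives do not lie in the generalised subsystem subgroup $B_{n-1}$ --- is never actually carried out; you only assert that ``once this translation is carried out'' the constraint $l<n$ finishes it. This is precisely the hardest point, and the paper closes it with two specific arguments you would need to supply: for unipotent $x$, \cite[Lemma~3.8]{MK18} shows the only unipotent elements of $B_{n-1}$ acting with two Jordan blocks on $V$ have $V\downarrow k[x']=V(2)+V(2n-2)$, i.e.\ are regular, hence not conjugate to your $V(2n-2l+2)+V(2l-2)$ with $l<n$; for nilpotent $x$, the distinguished nilpotent elements of $\so(W)$, $\dim W=2n-1$, decompose as $\sum_{j=1}^s W_{l_j}(n_j)\oplus D(m)$, and either $s=0$ (giving $D(n)$, which is the regular element $W_n(n)$ of $\so(V)$, excluded since $l<n$) or $s\geq 1$, in which case $x'$ already lies in a subalgebra of type $D_{n-m}B_{m-1}\leq D_{n-m}D_m$ and so has at least four Jordan blocks on $V$.

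A secondary imprecision: in ruling out $D_n<C_n$ for the $p=2$ nilpotent class $W_l(n)$ of $\Sp(V)$, you argue that Lemma~\ref{l:decomps}(v) ``constrains $W_l(n)$ to lie in the range $(n+1)/2<l\leq n$, which excludes $x$ with parameter $l<n/2$.'' This conflates two different invariants: the parameter $l$ for $W_l(m)$ in $\Sp(V)$ is defined via the alternating form, while for $\SO(V)$ it is defined via the quadratic form, so the ranges are not directly comparable. The correct resolution (as in the paper, citing \cite[p.~66]{LS12}) is that every orthogonal $W_l(n)$ with $(n+1)/2<l\leq n$ becomes the split module $W(n)$ when regarded as a nilpotent element of $\sp(V)$, and hence is not conjugate to the symplectic class $W_l(n)$ with $0<l<n/2$. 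Without that translation step, this part of your argument does not go through as written.
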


\begin{proof}
Let $x$ be such an element. It follows from Lemma \ref{l:decomps} that $x$ is distinguished. Therefore if $x$ is not eminent, then it must be contained one of the maximal generalised subsystem subgroups or subalgebras described in Lemma \ref{l:maxgensubsys}. 

Suppose first that $x$ acts on $V$ with a single Jordan block. Elements in the subgroups and subalgebras of Lemma \ref{l:maxgensubsys} act on $V$ with at least two Jordan blocks, so it follows that $x$ is eminent. The remaining cases occur when $p=2$ and in all of them $x$ acts with two Jordan blocks on $V$. We treat them in turn. 

Consider first the case where $G = \SO(V)$ is of type $B_n$, and let $R$ be the $1$-dimensional radical of $V$. We will show that elements from the subgroups and subalgebras of Lemma \ref{l:maxgensubsys} act on $V$ with at least three Jordan blocks, which proves that $x$ must be eminent. For $M < G$ of type $D_n$, we have $V \downarrow M = W \oplus R$, where $W$ is the natural module for $M = \SO(W)$. Unipotent elements of $M$ and nilpotent elements of $\text{Lie}(M)$ act on $W$ with an even number of Jordan blocks by Lemma \ref{l:decomps} (ii) and (v), so in particular they have at least three Jordan blocks on $V$. The other possibility in Lemma \ref{l:maxgensubsys} is $M < G$ of type $B_mB_{n-m}$ for $1 \leq m \leq n/2$. In this case $(V/R) \downarrow M = V_1 \oplus V_2$, where $V_1$ is $2m$-dimensional and $V_2$ is $2(n-m)$-dimensional. Thus unipotent elements of $M$ and nilpotent elements of $\text{Lie}(M)$ act on $V/R$ with at least three Jordan blocks by Lemma \ref{l:decomps} (iii) and (vi), and therefore they also act on $V$ with at least three Jordan blocks.

If $G = \Sp(V)$ of type $C_n$, then $x$ is nilpotent and $V \downarrow k[x] = W_l(n)$ for some $0 < l <\frac{n}{2}$. Such an element $x$ acts with two Jordan blocks of size $n$ on $V$. By considering each of the maximal subsystem subalgebras, we see that only those of type $C_{\frac{n}{2}}C_{\frac{n}{2}}$ (if $n$ is even) and $D_n$ contain elements that act with two Jordan blocks of size $n$. In type $C_{\frac{n}{2}}C_{\frac{n}{2}}$ the only element which does that acts as $V(n) + V(n)$ on $V$, which is already in canonical form and thus not conjugate to $x$. In the second case, the elements $x'$ which act with two Jordan blocks in $\SO(V')$ have $V' \downarrow k[x'] = W_l(n)$ for some $ \frac{n+1}{2} < l \leq n$. The definition of $W_l(n)$ for orthogonal groups in even dimension, given in \cite[p. 66]{LS12}, shows that all of these elements $x'$ are conjugate to a nilpotent element acting via $W(n)$ in $\Lie(G)$, and thus not conjugate to $x$.      

If $G = \SO(V)$ of type $D_n$, then there are both unipotent classes and nilpotent orbits to consider. In all of the cases the elements $x$ being considered act with two Jordan blocks on $V$. Using Lemma \ref{l:decomps} we see that every element contained in subsystem subgroups and subalgebras of type $D_m D_{n-m}$ ($1 \leq m \leq \frac{m}{2}$) will act with at least four Jordan blocks on $V$. Thus they are not conjugate to $x$. By Lemma \ref{l:maxgensubsys}, what remains is to show that $x$ is not conjugate to an element of the maximal generalised subsystem subgroup $M$ (or subalgebra $\m$) of type $B_{n-1}$. When $x$ is unipotent, this follows from \cite[Lemma~3.8]{MK18}, which shows that the only unipotent elements $x' \in M$ that act with two Jordan blocks on $V$ are those with $V \downarrow k[x'] = V(2) + V(2n-2)$. Such elements $x'$ are regular and not conjugate to $x$. Now let $x$ be nilpotent. We consider the distinguished nilpotent elements $x'$ in $\m = \so(W)$. By Lemma \ref{l:decomps} (vi), we have $W \downarrow k[x'] = \sum_{j=1}^s W_{l_j}(n_j) \oplus D(m)$ for some integers $l_j$, $n_j$ and $m$. If $s=0$, then $W \downarrow k[x'] = D(n)$ and $x'$ is a regular nilpotent element of $\Lie(G)$, and therefore is not conjugate to $x$. When $s \geq 1$, the elements $x'$ are contained in generalised subsystem subalgebras of type $D_{n-m} B_{m-1}$, which are contained in maximal subsystem subalgebras of type $D_{n-m} D_{m}$. We have already seen that any nilpotent element contained in such a subalgebra will act with at least four Jordan blocks on $V$, and thus cannot be conjugate to $x$. This completes the proof of the lemma.   \end{proof}

The remaining task for $G$ of classical type is to prove that the representatives of the eminent elements do indeed act with the claimed decompositions on $V$. When $x$ is regular, a representative is provided by Remark \ref{regularelts}. The non-regular eminent classes occur in characteristic $p = 2$ for types $C_n$ and $D_n$, the representatives for these classes are constructed in the following sections. 

\subsection{Nilpotent representatives in type $C_n$} \label{s:typeC}

Let $G=\text{Sp}(V)$ with $V$ a $k$-vector space of dimension $2n$. We need to prove that the nilpotent elements $$e_l = \left(\sum_{i=1}^{n-1} e_{\alpha_i}\right)+  e_{2 \alpha_l + \cdots + 2 \alpha_{n-1} + \alpha_n}$$ in Table \ref{t:classnil} act as $W_l(n)$ on the natural module $V$. We do this in the next lemma, but first present our choice of Chevalley basis for $\sp(V)$. 

Recall the form described at the beginning of Section \ref{s:classical}. For this choice of form, one checks that there is a Cartan subalgebra $\mathfrak{h}$ of diagonal matrices in $\mathfrak{sp}(V)\subset \gl(V)$ of the form $\diag(h_1, \ldots, h_n, -h_n, \ldots, -h_1)$. For $1 \leq i \leq n$, define maps $\varepsilon_i : \mathfrak{h} \rightarrow k$ by $\varepsilon_i(h) = h_i$. 

For all $i,j$ let $E_{i,j}$ be the linear endomorphism on $V$ such that $E_{i,j}(e_j) = e_i$ and $E_{i,j}(e_k) = 0$ for $k \neq j$. Then one checks that the endomorphisms $e_\lambda$ in Table \ref{chevbassp4} are elements of $\mathfrak{sp}(V)$, and are simultaneous eigenvectors for $\ad \h$.

\begin{table}[!htbp] \begin{align*}e_{\varepsilon_i - \varepsilon_j} &= E_{i,j} - E_{n+j,n+i} & \text{ for all } i \neq j, \\
e_{\varepsilon_i + \varepsilon_j} &= E_{j, n+i} + E_{i, n+j} & \text{ for all } i \neq j, \\
e_{-(\varepsilon_i + \varepsilon_j)} &= E_{n+j, i} + E_{n+i, j} & \text{ for all } i \neq j, \\
e_{2 \varepsilon_i} &= E_{i, n+i} & \text{ for all } i, \\
e_{-2 \varepsilon_i} &= E_{n+i, i} & \text{ for all } i. 
\end{align*}\caption{Chevalley basis for $\mathfrak{sp}_{2n}$\label{chevbassp4}}
\end{table}

From a dimension count, we deduce that concatenating the elements in Table \ref{chevbassp4} together with a basis of $\h$ gives a basis of $\sp_{2n}$. Moreover, $\Phi = \{ \pm(\varepsilon_i \pm \varepsilon_j) : 1 \leq i < j \leq n \} \cup \{ \pm 2\varepsilon_i : 1 \leq i \leq n \}$, and $\Phi^+ = \{ \varepsilon_i \pm \varepsilon_j : 1 \leq i < j \leq n \} \cup \{ 2 \varepsilon_i : 1 \leq i \leq n \}$ is a system of positive roots. We let $\Delta = \{\alpha_1, \ldots, \alpha_n\}$ be our base of $\Phi$ corresponding to $\Phi^+$, where $\alpha_i = \varepsilon_i - \varepsilon_{i+1}$ for $1 \leq i < n$ and $\alpha_n = 2 \varepsilon_n$. We give the expressions of roots $\alpha \in \Phi^+$ in terms of $\Delta$ in Table \ref{table:rootexpressionsC}. Lastly, checking commutator relations reveals the basis in Table \ref{chevbassp4} is in fact a Chevalley basis, with positive structure constants for extraspecial pairs.

\begin{table}[!htbp]\begin{tabular}{| c l | l |}
\hline
Root & & $\alpha = \sum_{k = 1}^n c_k \alpha_i$ \\
\hline
$\varepsilon_i - \varepsilon_j$, & $1 \leq i < j \leq n$ & $\sum_{i \leq k \leq j-1} \alpha_k$ \\
$\varepsilon_i + \varepsilon_j$, & $1 \leq i < j \leq n$ & $\sum_{i \leq k \leq j-1} \alpha_k + \sum_{j \leq k \leq n-1} 2 \alpha_k + \alpha_n$ \\
$2 \varepsilon_i$, & $1 \leq i \leq n$ & $\sum_{i \leq k \leq n-1} 2 \alpha_k + \alpha_n$ \\
\hline 

\end{tabular}
\caption{Type $C_n$, expressions for positive roots in terms of base $\Delta$.}\label{table:rootexpressionsC}
\end{table}

In particular, $e_{\alpha_i} = E_{i,i+1} - E_{n+i+1,n+i}$ for $1 \leq i \leq n-1$, $e_{\alpha_n} = E_{n,2n}$ and $e_{2 \alpha_l + \cdots + 2 \alpha_{n-1} + \alpha_n} = E_{l,n+l}$ for $l < n$.  

\begin{lemma} \label{l:Cnilprep}
For $0<l<n/2$, the element $e_l = \sum_{i=1}^{n-1} e_{\alpha_i} + e_{2 \alpha_l + \cdots + 2 \alpha_{n-1} + \alpha_n} \in \sp_{2n}(k)$ acts on $V$ as $V \downarrow k[e_l] = W_l(n)$. 
\end{lemma}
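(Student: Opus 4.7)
The plan is to work with the explicit matrix form of $e_l$ on the basis $v_1, \ldots, v_{2n}$. From the Chevalley basis formulas listed just before the lemma, $e_l$ acts by $v_{i+1} \mapsto v_i$ for $1 \le i \le n-1$, by $v_{n+j} \mapsto -v_{n+j+1}$ for $1 \le j \le n-1$ with $j \ne l$, by $v_{n+l} \mapsto v_l - v_{n+l+1}$, and kills $v_1$ and $v_{2n}$. Two Jordan chains appear at once: the chain $v_n, v_{n-1}, \ldots, v_1$ of length $n$, and a second chain starting at $v_{n+1}$ whose iterates are $\pm v_{n+1+k}$ until the index crosses $n+l$, after which an extra $v_{l-k+1}$-tail branches off. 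The hypothesis $l < n/2$ is exactly what ensures that this extra tail is annihilated (after reaching $v_1$) strictly before the main tail reaches $v_{2n}$, so the second chain also has length $n$. Hence $e_l$ acts on $V$ with Jordan type $[n,n]$.

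By Lemma~\ref{l:decomps}(iv) and dimensions, $V \downarrow k[e_l]$ is therefore either $W(n)$ or $W_{l'}(n)$ for a unique $0 < l' < n/2$. To identify which, I would study the forms $Q_r(v) := (e_l^{r+1}(v), e_l^r(v))$. The invariance $(e_l u, v) = -(u, e_l v)$ together with $\Char k = 2$ implies $Q_r$ is additive on $V$, so $Q_r \equiv 0$ iff $Q_r(v_i) = 0$ for each basis vector. The $W(n)$ case would force every $Q_r$ to vanish (both totally singular summands contribute $0$ and the cross-pairings cancel by the additivity just noted), whereas the direct calculation $Q_0(v_{n+l}) = (v_l - v_{n+l+1}, v_{n+l}) = 1 \ne 0$ rules this out; so $V \downarrow k[e_l] \cong W_{l'}(n)$ for some $l'$, and $l'$ is the smallest $r$ for which $Q_r \equiv 0$.

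To pin down $l' = l$ I would establish by induction the formula $e_l^k(v_{n+l}) = v_{l-k+1} + (-1)^k v_{n+l+k}$ for $k \ge 1$ (with out-of-range basis vectors read as $0$), deduce that $e_l^k(v_{n+l-j}) = (-1)^j e_l^{k-j}(v_{n+l})$ for $k \ge j$ and $0 \le j \le l-1$, and then verify directly that $Q_j(v_{n+l-j}) = 1$ for each $0 \le j \le l-1$, forcing $l' \ge l$. The reverse inequality requires a short case check that $Q_l(v_i) = 0$ for every basis vector $v_i$; the decisive point is that any pairing $(v_a, v_b)$ that could contribute nontrivially would need $a = b - n$, and the indices coming from the two ``tails'' of $e_l^k(v_{n+l})$ turn out to be misaligned under $l < n/2$.

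The main obstacle is the careful sign and index bookkeeping needed for these chain and pairing calculations. Conceptually nothing is subtle; one simply has to track how the $v_l$-summand introduced by $e_l(v_{n+l})$ cascades through further applications of $e_l$, and to use $l < n/2$ (equivalently $n + 2l + 1 \le 2n$) both to guarantee that all relevant basis vectors remain in range and to control precisely when each of the two tails in the second Jordan chain terminates.
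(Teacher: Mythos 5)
Your proposal is correct and is essentially the paper's own argument: write $e_l$ explicitly on the basis $v_1,\dots,v_{2n}$, check that it has kernel of dimension $2$ and satisfies $e_l^n=0$ (hence two Jordan blocks of size $n$), and then compute $\min\{m : (e_l^{m+1}(v),e_l^{m}(v))=0 \text{ for all } v\in V\}=l$ by reducing to basis vectors via the additivity of $v\mapsto (e_l^{m+1}(v),e_l^{m}(v))$ in characteristic $2$, with exactly the same witnesses $v_{n+l-j}$ for the lower bound and the same index-misalignment check for the upper bound. The one slip is in your intermediate case list: by Lemma \ref{l:decomps}(iv) a nilpotent element of $\sp(V)$ acting with two Jordan blocks of size $n$ could also have $V\downarrow k[x]=V(n)^2$ when $n$ is even, not only $W(n)$ or $W_{l'}(n)$; this is harmless, since that case has form invariant $n/2$, so your computation of the invariant as $l<n/2$ still forces $V\downarrow k[e_l]=W_l(n)$.
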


\begin{proof}
For ease of notation we set $e = e_l$. From the description of the $k[e]$-module $W_l(n)$ in Section \ref{s:classical}, it suffices to show that $e$ acts on the natural $2n$-dimensional module $V$ with two Jordan blocks of size $n$ and that $l = \min\{ m : (e^{m+1}(v), {e}^{m}(v)) = 0 \textrm{ for all } v \in V\}$. 

Using Tables \ref{chevbassp4} and \ref{table:rootexpressionsC}, we write
$$e = E_{l,n+l} + \sum_{i=1}^{n-1} (E_{i,i+1} + E_{n+i+1,n+i}),$$
and therefore the action of $e$ on $V$ is described as follows. We have 
\begin{align*}
e (v_1) = & \ 0 \\
e (v_i) = & \ v_{i-1}\qquad (2 \leq i \leq n) \\
e (v_{n+l}) = & \ v_{l} + v_{n+l+1} \\
e (v_j) = & \ v_{j+1}\qquad (n+1 \leq j < 2n, j \neq n+l) \\
e (v_{2n}) = & \ 0. 
\end{align*} 
We immediately see that $e^n(v_i) = 0$ for all $1 \leq i \leq 2n$. Furthermore, the relations above readily imply that the kernel of $e$ is the $2$-dimensional subspace of $V$ generated by $v_1$ and $v_{2n}$. Thus the Jordan normal form of $e$ has two Jordan blocks of size $n$. 

To show that $l = \min\{ m : (e^{m+1}(v), {e}^{m}(v)) = 0 \textrm{ for all } v \in V\}$ we start by observing 
$$(e^{j+1} (v_{n+l-j}), e^{j}(v_{n+l-j})) = (v_l + v_{n+l+1}, v_{n+l}) =  1 \text{ for all } j < l.$$

It remains to show that $({e}^{l+1}(v), {e}^{l} (v)) = 0$ for all $v \in V$. Since $({e}^{l+1}(v_i), {e}^{l} (v_j)) + ({e}^{l+1}(v_j), {e}^{l} (v_i))=0$ for all $i,j$, by definition of $\sp(V)$ preserving the alternating form, it suffices to show that $({e}^{l+1}(v_i), {e}^{l} (v_i)) = 0$ for all basis vectors $v_i$. 

If $1 \leq i \leq n$ or $n+l < i \leq 2n$ we immediately have that $({e}^{l+1}(v_i), {e}^{l} (v_i)) = 0$ since $e$ stabilises the totally isotropic subspaces $\langle v_1, \ldots, v_n \rangle$ and $\langle v_{n+l+1}, \ldots, v_{2n} \rangle$. Now suppose that $1 \leq j \leq l$. Then $e^{l}(v_{n+j}) = v_{l-j+1} + v_{n+l+j}$ and so $e^{l+1}(v_{n+j}) = v_{l-j} + v_{n+l+j+1}$ and so $({e}^{l+1}(v_{n+j}), {e}^{l} (v_{n+j})) = 0$, as required. 
\end{proof}

\subsection{Nilpotent and unipotent representatives in type $D_n$} \label{s:typeD}
Let $G=\text{SO}(V)$ with $V$ a $k$-vector space of dimension $2n$. We need to establish the correctness of the representatives in Tables \ref{t:classunip} and \ref{t:nilprec}. Recall we define the root $\beta_l = \alpha_{n} + \sum_{i=2l-n-1}^{n-2} \alpha_{i}$ for $(n+1)/2 < l \leq n$.

We mimic the process from the last section. Using the form from the beginning of Section \ref{s:classical} we check we may choose a  Cartan subalgebra  $\mathfrak{h}$  of the form $\diag(h_1, \ldots, h_n, -h_n, \ldots, -h_1)$ with $h_i\in k$. For $1 \leq i \leq n$, define maps $\varepsilon_i : \mathfrak{h} \rightarrow k$ by $\varepsilon_i(h) = h_i$. A Chevalley basis for $\mathfrak{so}(V)$ with positive structure constants for extraspecial pairs is given in Table \ref{chevbasson}.

\begin{table}[!htbp]\begin{align*} 
e_{\varepsilon_i - \varepsilon_j} &= E_{i,j} - E_{n+j,n+i} & \text{ for all } i \neq j, \\
e_{\varepsilon_i + \varepsilon_j} &= E_{j,n+i} - E_{i,n+j} & \text{ for all } i < j, \\
e_{-(\varepsilon_i + \varepsilon_j)} &= E_{n+i,j} - E_{n+j,i} & \text{ for all } i < j.
\end{align*} \caption{Chevalley basis of $\so(2n)$.}\label{chevbasson}
\end{table}

Now $\Phi = \{ \pm (\varepsilon_i \pm \varepsilon_j) : 1 \leq i < j \leq n\}$ is the root system of $\mathfrak{so}(V)$, and $\Phi^+ = \{ \varepsilon_i \pm \varepsilon_j : 1 \leq i < j \leq n \}$ is a system of positive roots. Here the base $\Delta$ of $\Phi$ corresponding to $\Phi^+$ is $\Delta = \{ \alpha_1, \ldots, \alpha_n \}$, where $\alpha_i = \varepsilon_i - \varepsilon_{i+1}$ for $1 \leq i < n$ and $\alpha_n = \varepsilon_{n-1} + \varepsilon_n$. We give the expressions of roots $\alpha \in \Phi^+$ in terms of $\Delta$ in Table \ref{table:rootexpressionsD}.

\begin{table}[!htbp]\begin{tabular}{| cl | l |}
\hline
Root & & $\alpha = \sum_{k = 1}^n c_k \alpha_i$ \\
\hline
$\varepsilon_i - \varepsilon_j$, & $1 \leq i < j \leq n$ & $\sum_{i \leq k \leq j-1} \alpha_k$ \\
$\varepsilon_i + \varepsilon_j$, & $1 \leq i < j \leq n-1$ & $\sum_{i \leq k \leq j-1} \alpha_k + \sum_{j \leq k \leq n-2} 2 \alpha_k + \alpha_{n-1} + \alpha_n$ \\
$\varepsilon_i + \varepsilon_n$, & $1 \leq i \leq n-1$ & $\sum_{i \leq k \leq n-2} \alpha_k + \alpha_n$ \\
\hline
\end{tabular}
\caption{Type $D_n$, expressions for positive roots in terms of base $\Delta$.}\label{table:rootexpressionsD}
\end{table}

\begin{lemma} \label{l:Duniprep}
The element $u_l = \prod_{i=1}^{n-1} x_{\alpha_i}(1) \cdot x_{\beta_l}(1)$ acts on $V$ as $V\downarrow k[u_l]=V(2l-2) \oplus V(2n-2l+2)$.
\end{lemma}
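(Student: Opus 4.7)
The plan is to compute the matrix of $N := u_l - I$ in the basis $\{v_1,\dots,v_{2n}\}$ and read off the Jordan structure directly. First, by telescoping the expression for $\beta_l$ one finds $\beta_l = \varepsilon_{2l-n-1}+\varepsilon_n$, and Table \ref{chevbasson} then gives
\[
e_{\alpha_i} = E_{i,i+1} - E_{n+i+1,n+i}\ \ (1 \le i \le n-1),\qquad e_{\beta_l} = E_{n,2l-1} - E_{2l-n-1,2n}.
\]
Each of these matrices has square zero, so $x_\alpha(1) = I + e_\alpha$; in characteristic $2$ the signs are immaterial, so $u_l = \prod_{i=1}^{n-1}(I+e_{\alpha_i})\cdot(I+e_{\beta_l})$.

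Next, I would compute $u_l(v_j)$ for each basis vector $v_j$ by applying the factors in the order $(I+e_{\beta_l}),(I+e_{\alpha_{n-1}}),\ldots,(I+e_{\alpha_1})$ from right to left. Careful bookkeeping yields
\begin{align*}
N(v_j) &= \textstyle\sum_{k=1}^{j-1} v_k \quad (1 \le j \le n),\\
N(v_j) &= v_{j+1} \quad (n < j < 2n,\ j \ne 2l-1),\\
N(v_{2l-1}) &= v_{2l} + \textstyle\sum_{k=1}^{n} v_k,\\
N(v_{2n}) &= \textstyle\sum_{k=1}^{2l-n-1} v_k.
\end{align*}
A direct linear algebra calculation then shows $\ker N = \langle v_1,\, v_{2l-n}+v_{2n}\rangle$, so $u_l$ has exactly two Jordan blocks on $V$, of sizes summing to $2n$.

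To pin the sizes down as $2l-2$ and $2n-2l+2$, I would trace the $N$-orbit of $v_{n+1}$: the iterates visit $v_{n+2}, v_{n+3}, \ldots, v_{2l-1}$ in the first $2l-n-2$ steps, then the coupling term $v_{2l}+\sum_{k=1}^n v_k$ at step $2l-n-1$, and by continuing one verifies $N^{2l-3}(v_{n+1}) \neq 0$ while $N^{2l-2}(v_{n+1}) = 0$. This gives a cyclic $k[u_l]$-submodule of dimension exactly $2l-2$, so the larger Jordan block has size at least $2l-2$. To bound it from above one checks $N^{2l-2}=0$ on $V$ by evaluating on each of the finitely many basis vectors using the formulas above (equivalently, by showing $\dim\ker N^{2l-2} = 2n$). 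Combined with the two-block count, this forces the sizes to be $2l-2$ and $2n-2l+2$.

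The principal obstacle is Step 2, the careful bookkeeping in computing $N(v_j)$. The nontrivial formula $N(v_{2l-1}) = v_{2l} + \sum_{k=1}^n v_k$ arises because the factor $(I+e_{\alpha_{k_0}})$ with $k_0 = 2l-n-1$ simultaneously acts on $v_{k_0+1}$ (sending it to $v_{k_0}$) and on $v_{n+k_0} = v_{2l-1}$ (sending it to $v_{2l}$); both contributions have to be tracked through all subsequent applications. Once these formulas are established, the kernel computation and the orbit analysis are routine combinatorics in characteristic $2$.
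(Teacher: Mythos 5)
Your computation of $N=u_l-I$ on the basis vectors agrees exactly with the paper's (the formulas for $N(v_j)$, the kernel $\langle v_1,\,v_{2l-n}+v_{2n}\rangle$, and the vanishing/non-vanishing of $N^{2l-2}$ and $N^{2l-3}$ are all as in the paper's proof), so the Jordan-block part of your argument is sound. But there is a genuine gap at the very end: you conflate "two Jordan blocks of sizes $2l-2$ and $2n-2l+2$" with the conclusion $V\downarrow k[u_l]=V(2l-2)\oplus V(2n-2l+2)$. In this paper $V(m)$ denotes a specific orthogonally indecomposable non-degenerate summand, and in characteristic $2$ the Jordan type of a unipotent element of $\SO(V)$ does \emph{not} determine its orthogonal decomposition. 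Concretely, when $n$ is even and $l=(n+2)/2$ (which lies in the admissible range $\frac{n+1}{2}<l<n$), both blocks have size $n$, and by Lemma \ref{l:decomps}(ii) there are two distinct classes with this Jordan type, namely $W(n)$ (regular in a $\GL_n$ Levi, hence not distinguished) and $V(n)\perp V(n)$. Your argument cannot tell these apart, so it does not prove the stated decomposition in that case. (When the two block sizes are distinct, Lemma \ref{l:decomps}(ii) does force the decomposition to be $V(2l-2)\oplus V(2n-2l+2)$, since a $W(m)$ summand contributes two equal blocks; but you should say this explicitly rather than treat it as automatic.)

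The paper closes this gap by bringing the form into play: it computes $\bigl((u-1)^{2l-3}(v_{n+1}),\,v_{n+1}\bigr)=1$ and invokes \cite[Lemma~6.9]{Mikko1}, which guarantees that $V(2l-2)$ then occurs as an orthogonal direct summand of $V\downarrow k[u]$; together with the two-block count this pins down the decomposition in all cases, including $l=(n+2)/2$. Your orbit trace of $v_{n+1}$ already produces the vector $(u-1)^{2l-3}(v_{n+1})$ needed for this pairing, so the fix is a one-line addition to your Step 3 — but as written the proof is incomplete precisely in the situation (equal block sizes in characteristic $2$) that this paper is most concerned with distinguishing.
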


\begin{proof}
From Tables \ref{chevbasson} and \ref{table:rootexpressionsD}, we have $x_{\alpha_i}(1) = I + E_{i,i+1} + E_{n+i+1,n+i}$ for all $1 \leq i \leq n-1$, where $I$ is the identity and $E_{i,j}$ is defined as before. Furthermore, we see that $x_{\beta_l}(1) = I + E_{2l-n-1,2n} + E_{n,2l-1}$. Now $u : = u_l$ acts on the basis elements of $V$ as follows: \begin{align*}
u (v_i) = & \ v_i + v_{i-1} + \cdots + v_1 \quad  (1 \leq i \leq n) & \\
u (v_j) = & \ v_j + v_{j+1} \quad (n+1 \leq j < 2n, \ j \neq 2l-1) & \\
u (v_{2l-1}) = & \ v_{2l-1} + v_{2l} + v_{n} + v_{n-1} + \cdots + v_1 & \\
u (v_{2n}) = & \ v_{2n} + v_{2l-n-1} + v_{2l-n-2} + \cdots + v_1. & 
\end{align*} 
A calculation shows that the fixed point space of $u$ has dimension $2$, and that it is spanned by $v_1$ and $v_{2l-n} + v_{2n}$. Therefore the Jordan normal form of $u$ has two Jordan blocks. To see that the Jordan block sizes are $2l-2$ and $2n-2l+2$, it suffices to show that $(u-1)^{2l-2} = 0$ and $(u-1)^{2l-3} \neq 0$, as then the largest Jordan block size in $u$ is $2l-2$. To this end, a calculation shows that $(u-1)^{2l-2}(v_i) = 0$ for all $i$ and $$(u-1)^{2l-3}(v_{n+1}) = \begin{cases} v_{1}, & \text{ if } 2l - 2 > n, \\ v_{2n} + v_2 + v_1, & \text{ if } 2l-2 = n . \end{cases}$$
It is now clear that $((u-1)^{2l-3}(v_{n+1}),v_{n+1}) =1$ and $(u-1)^{2l-2}(v_{n+1}) = 0$. Therefore $V(2l-2)$ occurs as summand of $V \downarrow k[u]$ by \cite[Lemma~6.9]{Mikko1}, and we must have $V \downarrow k[u] = V(2l-2) \oplus V(2n-2l+2)$, as required. 
\end{proof}

\begin{lemma} \label{l:Dnilprep}
The element $e_l = e_{\beta_l} + \sum_{i=1}^{n-1} e_{\alpha_i}$ acts on $V$ via $V\downarrow k[e_l]=W_l(n)$. 
\end{lemma}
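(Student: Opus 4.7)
My plan is to mirror the matrix-computation approach used in Lemmas \ref{l:Cnilprep} and \ref{l:Duniprep}. First I would use the Chevalley basis in Table \ref{chevbasson} together with the root expression $\beta_l = \varepsilon_{2l-n-1} + \varepsilon_n$ (read off from Table \ref{table:rootexpressionsD}) to write
\begin{equation*}
e_l \;=\; E_{n,\,2l-1} + E_{2l-n-1,\,2n} + \sum_{i=1}^{n-1}\bigl(E_{i,\,i+1} + E_{n+i+1,\,n+i}\bigr)
\end{equation*}
in which signs are irrelevant since $p=2$. Reading off the action on the basis gives $e_l(v_i)=v_{i-1}$ for $2\leq i\leq n$ and $e_l(v_1)=0$; $e_l(v_j)=v_{j+1}$ for $n+1\leq j\leq 2n-1$ with $j\neq 2l-1$; and the two twist terms contribute $e_l(v_{2l-1})=v_{2l}+v_n$ and $e_l(v_{2n})=v_{2l-n-1}$.

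Next I would read off the Jordan block decomposition as in the unipotent case. The subspace $W:=\langle v_1,\dots,v_n\rangle$ is $e_l$-invariant with $e_l|_W$ a single Jordan block of size $n$; in the quotient $V/W$ the $v_n$-summand of $e_l(v_{2l-1})$ disappears, so $\bar{e}_l$ sends $\bar{v}_j\mapsto \bar{v}_{j+1}$ for $n+1\leq j\leq 2n-1$ and $\bar{v}_{2n}\mapsto 0$, i.e.\ another Jordan block of size $n$. Hence $e_l$ has exactly two Jordan blocks of size $n$ on $V$, and in particular $e_l^n=0$.

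The heart of the argument is then to show $l=\min\{m:Q(e_l^m(v))=0 \text{ for all }v\in V\}$. For the lower bound, I would compute $e_l^{l-1}(v_{n+1})=v_{n+l}+v_l$ directly: starting from $v_{n+1}$, the orbit proceeds $v_{n+1}\mapsto v_{n+2}\mapsto\cdots\mapsto v_{2l-1}$ (that is $2l-n-2$ steps), then $e_l(v_{2l-1})=v_{2l}+v_n$, and a further $n-l$ steps raises the first index and lowers the second in tandem to $v_{n+l}+v_l$. Since $(v_l,v_{n+l})=1$, we get $Q(v_{n+l}+v_l)=1\neq 0$, witnessing $Q(e_l^{l-1}(\cdot))\not\equiv 0$. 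For the upper bound, the key observation is that $2l>n$, so $e_l^{2l}=0$. Expanding via the polarization,
\begin{equation*}
Q\!\left(e_l^l\!\left(\sum_i c_i v_i\right)\right) = \sum_i c_i^{2}\,Q(e_l^l(v_i)) + \sum_{i<j} c_i c_j \bigl(e_l^l(v_i),\,e_l^l(v_j)\bigr),
\end{equation*}
the cross terms collapse because $(e_l^l(v_i),e_l^l(v_j))=(v_i,e_l^{2l}(v_j))=0$ (using $e_l\in\so(V)$), and the diagonal terms $Q(e_l^l(v_i))$ are handled on a case split: for $v_i\in W$ we have $e_l^l(v_i)\in\{v_{i-l},0\}$, so $Q=0$; for $v_i=v_{n+j}$ a direct trace of the orbit either yields $v_{n+l+j}+v_{l-j}$ (and $Q=0$ since $j\neq 0$) or the orbit has already passed through $v_{2n}$ and collapsed by cancellation to $0$.

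Invoking Lemma \ref{l:decomps}(v), the fact that $e_l$ has two size-$n$ Jordan blocks on $V$ together with the computed value of the invariant $l$ uniquely identifies the class, giving $V\downarrow k[e_l]=W_l(n)$. The main obstacle is not conceptual but combinatorial: the case split required to trace $e_l^l(v_{n+j})$ depends on whether the orbit of $v_{n+j}$ reaches $v_{2n}$ before $l$ steps elapse, and one must verify that the two terms that appear then cancel via the $e_l(v_{2n})=v_{2l-n-1}$ twist, so that no spurious nonzero value of $Q$ survives.
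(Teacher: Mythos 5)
Your overall route is the same as the paper's: write $e_l$ explicitly as a matrix using the Chevalley basis of Table \ref{chevbasson}, determine the Jordan type on $V$, and then compute the invariant $\min\{m : Q(e_l^m(v))=0 \text{ for all } v\in V\}$. The second half of your argument is sound, and in the diagonal-term computation you are in fact more careful than the paper: when $l>(2n+1)/3$ the orbit of some $v_{n+j}$ really does pass through $v_{2n}$ before $l$ steps have elapsed, the naive formula $e_l^l(v_{n+j})=v_{n+l+j}+v_{l-j}$ breaks down (the index $n+l+j$ exceeds $2n$), and one needs exactly the cancellation through $e_l(v_{2n})=v_{2l-n-1}$ that you describe; the conclusion $Q(e_l^l(v_{n+j}))=0$ holds either way.

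The gap is in your determination of the Jordan type. From an $e_l$-invariant subspace $W$ on which $e_l$ acts as a single Jordan block of size $n$, together with a single block of size $n$ on the quotient $V/W$, you cannot conclude that $V$ decomposes as two blocks of size $n$: a single Jordan block of size $2n$ admits exactly such a filtration (take $W$ to be the image of the $n$-th power), and more generally so does any type consisting of blocks of sizes $n+k$ and $n-k$. So the inference ``hence $e_l$ has exactly two Jordan blocks of size $n$, and in particular $e_l^n=0$'' is invalid as stated; and since your vanishing of the cross terms rests on $e_l^{2l}=0$, which you deduce from $e_l^n=0$, the gap propagates into the second half of the proof. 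The fix is the direct computation the paper performs: from the explicit action one checks that $\ker e_l$ is $2$-dimensional (spanned by $v_1$ and $v_{2l-n}+v_{2n}$), so there are exactly two blocks, and that $e_l^n=0$, so both blocks have size at most $n$ and hence exactly $n$.
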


\begin{proof}
The tables above yield $e_{\alpha_i} = E_{i,i+1}  + E_{n+i+1,n+i}$ for $i < n$ and $e_{\beta_l} = E_{2l-n-1,n} + E_{n,2l-1}$, so that $$e := e_l = E_{2l-n-1,2n} + E_{n,2l-1} + \sum_{i=1}^{n-1} (E_{i,i+1}  + E_{n+i+1,n+i}).$$ We need to show that $e$ has two Jordan blocks of size $n$ and that $l = \text{min}\{m \mid Q(e^m(v)) = 0 \ \forall \ v \in V\}$. We start by calculating the action of $e$ on the basis vectors of $V$:  
\begin{align*}
e (v_1) = & \ 0  \\
e (v_i) = & \ v_{i-1} \  (2 \leq i \leq n)  \\
e (v_j) = & \ v_{j+1} \  (n+1 \leq j < 2n, j \neq 2l - 1)  \\
e (v_{2l-1}) = & \ v_{2l} + v_{n} \\
e (v_{2n}) = & \ v_{2l-n-1}. 
\end{align*} 
One sees that the kernel of $e$ has dimension $2$, and it is spanned by $v_1$ and $v_{2l-n} + v_{2n}$. Therefore the Jordan normal form of $e$ has two Jordan blocks and a routine calculation with the  basis $\{v_i\}$ shows that $e^n(v) = 0$ for all $v\in V$. 

Finally, we must show that $l = \text{min}\{m \mid Q(e^m(v)) = 0 \ \forall \ v \in V\}$. Since $e^{l-1} (v_{n+1}) = v_{l} + v_{n+l}$, it follows that $Q(e^{l-1} (v_{m+1})) = 1$ and so $Q(e^{j} (e^{l-1-j} (v_{m+1}))) = 1$ for all $j \leq l-1$. It remains to prove that $Q(e^l(v)) =0$ for all $v \in V$. Since $2l \geq n$, it follows that $(e^l(v),e^l(w)) = (v,e^{2l}(w)) = 0$. Therefore, it suffices to show that $Q(e^l(v_i)) =0$ for all basis vectors $v_i$. If $1 \leq i \leq n$ or $2l \leq i \leq 2n$ then $e^{l}(v_i) = v_j$ for some $j$ and so $Q(e^{l}(v_i))=0$. When $n+1 \leq i \leq 2l-1$, we have $e^{l}(v_i) = v_{l+i} + v_{n-i+l}$ and so again, $Q(e^{l}(v_i))=0$. 
\end{proof}

\begin{remark}\label{remark:Duniprep}
Interpreting the coefficients of the unipotent element $u_l$ of Lemma \ref{l:Duniprep} as integers, reduction mod $p$ for $p>2$ gives an element acting on the natural module with Jordan block sizes $(2l-1,2n-2l+1)$. Indeed, a calculation shows that the fixed point space is spanned by $v_1$ and $v_{2l-n} + v_{2n}$, and furthermore $$(u-1)^{2l-2}(v_j) = \begin{cases} (-1)^n 2v_1, & \text{ if } j = n+1 \\ 0, &\text{ if } j \neq n+1\end{cases}$$ so the largest Jordan block size of $u_l$ is $2l-1$.

Therefore $u_l$ is regular in the subsystem $B_m B_{n-m+1}$ of $D_n$ and is therefore another representative alongside that provided in Lemma \ref{lemma:typeDoddsystem}. Similarly one can calculate that the nilpotent element $e_l$ of Lemma \ref{l:Dnilprep} has Jordan block sizes $(2l-1,2n-2l+1)$ in characteristic $p \neq 2$. Since the roots involved in $u_l$ and $e_l$ are $\Z$-independent, by \cite[Lemma 16.2C]{HumphreysGroupBook} these representatives have the advantage that they work regardless of the choice of signs of the structure constants.\end{remark}

\subsection{Exceptional types}

In this section we prove Theorem \ref{nilreps} for $G$ of exceptional type. If $x$ is unipotent, this follows from work of Lawther in \cite{L09}. It is routine to refine his lists to remove those distinguished unipotent classes of $G$ which are not eminent. For each non-eminent unipotent class we list its maximal subsystem overgroups in Table \ref{t:unipinexcep}, for completeness. We note that a representative of a unipotent class with label $X$ is simply $\prod_{j \in J} x_{\alpha_j}(1)$, where $\sum_{j \in J} e_{\alpha_j}$ is the representative of the nilpotent element with label $X$ given in \cite{Ste16}. This follows from the proofs in \cite{LS12} (see the introduction of Sections 17 and 18 in ibid.) since the representatives in \cite{Ste16} are deduced from \cite{LS12}.  

In good characteristic, \cite[p.24]{LT11} provides enough information to find the non-eminent distinguished nilpotent orbits. As might be expected, these classes are in bijection with the non-eminent distinguished unipotent classes (sending a unipotent class to the nilpotent orbit of the same label). It remains to consider the nilpotent orbits of $\Lie(G)$ in bad characteristic. We follow Lawther's approach, which entails constructing representatives of each distinguished nilpotent orbit of $\Lie(M)$ for $M$ a maximal subsystem subgroup of $G$ and determining its $G$-class. To do this we make heavy use of use Theorem~\ref{p:nilprec}.

Carrying out this calculation provides a proof of Theorem \ref{nilreps}. Lastly, we mention Table \ref{t:nilpinexcep}. It provides a list of distinguished, non-eminent nilpotent orbits, giving  their maximal subsystem overalgebras, for all characteristics. Specialising to good characteristic, we recover the analogous result in \cite{L09}. 

\begin{center}\begin{longtable}{|c|c|c|c|}\hline Type & Char. & Label & Maximal subsystem overgroups \\\hline
$G_2$ & any & $G_2(a_1)$ & $A_2$, $A_1 \tilde{A}_1$ $(p \neq 2)$, $\tilde{A}_2$ $(p=3)$ \\
\hline
$F_4$ & any & $F_4(a_1)$ & $B_4$, $C_4$ $(p=2)$ \\
& $p \neq 2$ & $F_4(a_2)$ & $A_1 C_3$ \\
& any & $F_4(a_3)$ & $B_4$, $A_2 \tilde{A}_2$ $(p \neq 3)$, $A_1 C_3$ $(p \neq 2)$, $C_4$ $(p=2)$ \\
& $p=2$ & $(C_3(a_1))_2$ & $B_4$, $C_4$ \\
\hline
$E_6$ & $p \neq 2$ & $E_6(a_3)$ & $A_1 A_5$  \\
\hline
$E_7$ & $p \neq 2$ & $E_7(a_3)$ & $A_1 D_6$ \\
& any & $E_7(a_4)$ & $A_1 D_6$, $A_7$ ($p=2$) \\
& any & $E_7(a_5)$ & $A_1 D_6$ $(p \neq 2)$, $A_2 A_5$ $(p \neq 3)$ \\
\hline
$E_8$ & $p \neq 2$ & $E_8(a_3)$ & $A_1 E_7$ \\
& $p \neq 2$ & $E_8(a_4)$ & $D_8$ \\
& $p \neq 2$ & $E_8(a_5)$ & $D_8$ \\
& any & $E_8(a_6)$ & $A_8$ $(p \neq 3)$, $D_8$ $(p \neq 2)$ \\
& any & $E_8(a_7)$ & $A_4 A_4$ $(p \neq 5)$, $A_2 E_6$ $(p \neq 3)$, $A_1 E_7$ $(p \neq 2)$, $D_8$ $(p \neq 2)$ \\
& any & $E_8(b_4)$ & $A_1 E_7$, $D_8$ $(p=2)$ \\
& any & $E_8(b_5)$ & $A_2 E_6$ $(p \neq 3)$, $A_1 E_7$ $(p \neq 2)$ \\
& any & $E_8(b_6)$ & $A_2 E_6$, $D_8$ $(p \neq 2)$, $A_8$ $(p=3)$ \\ 
& $p=2$ & $(D_7(a_1))_2$ & $A_1 E_7$, $D_8$ \\
\hline
\caption{Maximal subsystem overgroups of non-eminent distinguished unipotent elements in exceptional types}\label{t:unipinexcep}\end{longtable}\end{center}

\begin{center}\begin{longtable}{|c|c|c|c|}\hline Type & Char. & Label & Maximal subsystem overalgebras \\\hline
$G_2$ & any & $G_2(a_1)$ & $A_2$, $A_1 \tilde{A}_1$ $(p \neq 2)$ \\
\hline
$F_4$ & any & $F_4(a_1)$ & $B_4$ \\
& $p \neq 2$ & $F_4(a_2)$ & $A_1 C_3$ \\
& any & $F_4(a_3)$ & $B_4$, $A_2 \tilde{A}_2$ $(p \neq 3)$, $A_1 C_3$ $(p \neq 2)$ \\
& $p=2$ & $(C_3(a_1))_2$ & $B_4$ \\
\hline
$E_6$ & $p \neq 2$ & $E_6(a_3)$ & $A_1 A_5$  \\
\hline
$E_7$ & $p \neq 2$ & $E_7(a_3)$ & $A_1 D_6$ \\
& any & $E_7(a_4)$ & $A_1 D_6$ \\
& any & $E_7(a_5)$ & $A_1 D_6$ $(p \neq 2)$, $A_2 A_5$ $(p \neq 3)$ \\
\hline 
\endfirsthead
$E_8$ & $p \neq 2$ & $E_8(a_3)$ & $A_1 E_7$ \\
& $p \neq 2$ & $E_8(a_4)$ & $D_8$ \\
& $p \neq 2$ & $E_8(a_5)$ & $D_8$ \\
& any & $E_8(a_6)$ & $A_8$ $(p \neq 3)$, $D_8$ $(p \neq 2)$ \\
& any & $E_8(a_7)$ & $A_4 A_4$ $(p \neq 5)$, $A_2 E_6$ $(p \neq 3)$, $A_1 E_7$ $(p \neq 2)$, $D_8$ $(p \neq 2)$ \\
& any & $E_8(b_4)$ & $A_1 E_7$ \\
& any & $E_8(b_5)$ & $A_2 E_6$ $(p \neq 3)$, $A_1 E_7$ $(p \neq 2)$ \\
& any & $E_8(b_6)$ & $A_2 E_6$, $D_8$ $(p \neq 2)$ \\ 
\hline
\caption{Maximal subsystem overalgebras of non-eminent distinguished nilpotent elements in exceptional types}\label{t:nilpinexcep}\end{longtable}\end{center}

\section{Appendix: Auxiliary tables for Theorem \ref{p:nilprec}} \label{s:auxtabs}

Below follows some auxiliary data for Theorem \ref{p:nilprec}. Let $G$ be expectional. In each table we use a horizontal line to separate the sets of nilpotent orbits (represented by their labels) for which the Jordan block structure on $V_{\text{min}}$ (when it exists) and $\Lie(G)$ coincide. For each class we then provide the additional data required to distinguish them. See the introduction for the terminology (DS, ALG, etc.).

\begin{table}[h]
\begin{minipage}[b]{0.5\linewidth}\centering
\begin{tabular}{|c|l|}
\hline
Label & DS data \\
\hline
$F_4(a_1)$ & $10,5,2,0$ \\
$F_4(a_2)$ & $10,3,0$ \\
\hline
$B_3$ &  $14,9,6$ \\
$F_4(a_3)$ &  $14,7,3,0$ \\
\hline
$(C_3(a_1))_2$ &  $16,8,4,0$ \\
$(\tilde{A}_2 A_1)_2$ &  $16,8,3,0$ \\
\hline
$C_3(a_1)$ &  $20,16,15,7,4,0$ \\
$\tilde{A}_2 A_1$ &  $20,16,15,7,1,0$ \\
 \hline
\end{tabular}
\caption{$G = F_4$, $p=2$}
\end{minipage}%
\begin{minipage}[b]{0.5\linewidth}\centering
\begin{tabular}{|c|l|}
\hline
Label  & DS data \\
\hline
$D_5$ &  $12,8,6, 2,0$ \\
$E_6(a_3)$ &  $12,6,0$ \\
 \hline
$D_4$ &  $20,17,16$ \\
$D_4(a_1)$ &  $20,15,8,0$ \\
\hline
$A_3 A_1$ &  $24,21,20,14,12,4,0$ \\
$A_2^2 A_1$  & $24,21,20,12,1,0$ \\
\hline
\end{tabular} 
\caption{$G = E_6$, $p=2$}
\end{minipage}
\end{table}

\begin{table}[h]
\begin{minipage}[b]{0.5\linewidth}\centering
\begin{tabular}{|c|l|}
\hline
Label & ALG data  \\
\hline
$E_6$ & $31$ (sc) / $24$ (ad) \\
$E_6(a_1)$ & $60$ (sc) / $29$ (ad) \\
\hline
\end{tabular}
\caption{$G = E_6$, $p=3$}
\end{minipage}%
\begin{minipage}[b]{0.5\linewidth}\centering
\begin{tabular}{|c|l|}
\hline
Label & DS data \\
\hline
$E_6$ &  $15,12,9$ \\
$E_6(a_1)$ &  $15,9,4,0$ \\
\hline
$D_6(a_1)$ &  $19,15$ \\
$A_6$ & $19,17$ \\
\hline
\end{tabular} 
\caption{$G = E_7$, $p=3$}
\end{minipage}
\end{table}

\begin{table}[h]
\begin{minipage}[b]{0.3\linewidth}\centering
\begin{tabular}[t]{|c|l|}
\hline
 Label & ALG data \\
 \hline
 $E_7$ & 62 \\
 $E_7(a_1)$ & 67 \\
 \hline
 $D_{6}$ & 58 \\
 $E_{7}(a_{4})$ & 63 \\
  $D_{6}(a_{1})$ & 70 \\
 $(A_{6})_{2}$ & 75 \\
 \hline
 $D_{5} A_{1}$ & 73 \\
 $E_{7}(a_{5})$ & 91 \\
 \hline
$D_{5}$ & 70 \\
 $E_{6}(a_{3})$ & 84 \\
 \hline
 $D_{4} A_{1}$ & 63 \\
 $A_{3} A_{2} A_{1}$ & 69 \\
 $(A_{3} A_{2})_{2}$ & 74 \\ 
 $D_{4}(a_{1}) A_{1}$ & 83 \\
 \hline
$D_{4}$ & 66 \\
 $D_{4}(a_{1})$ & 82 \\
 \hline
 \end{tabular}
 \end{minipage}%
\begin{minipage}[b]{0.3\linewidth}\centering
 \begin{tabular}[t]{|c|l|}
 \hline
Label & DS data \\
\hline
 $D_{6}(a_{2})$ & $26,21,20,12,2,0$ \\
 $A_{5} A_{1}$ & $26,21,20,12,1,0$ \\
\hline
$(A_{3} A_{1})^{(1)}$ & $43, 39, 37, 29, 21, 20, 4, 0$ \\
$A_{2}^2 A_{1}$ & $43, 39, 37, 27, 8, 0$ \\
\hline
\end{tabular}
\end{minipage}
\caption{$G = E_7$ (simply connected), $p=2$}
\end{table}

\begin{table}[h]
\begin{minipage}[b]{0.3\linewidth}\centering
\begin{tabular}[t]{|c|l|}
\hline
Label & ALG data \\
\hline
$E_{7}$ & $56$ \\
$E_{7}(a_{1})$ & $58$ \\
\hline
$D_{6}$ & $52$ \\
$E_{7}(a_{4})$ & $54$ \\
$D_{6}(a_{1})$ & $56$ \\
$(A_{6})_{2}$ & $57$ \\
\hline
$D_{5}A_{1}$ & $60$ \\
$E_{7}(a_{5})$ & $71$ \\
\hline
$D_{5}$ & $42$ \\
$E_{6}(a_{3})$ & $50$ \\
\hline
$A_{4}A_{2}$ & $54$ \\
$D_{5}(a_{1})$ & $39$ \\
\hline
$D_{4}A_{1}$ & $57$ \\
$A_{3}A_{2}A_{1}$ & $58$ \\
$(A_{3}A_{2})_{2}$ & $60$ \\
$D_{4}(a_{1})A_{1}$ & $65$ \\
\hline
$A_{2}^2$ & $56$ \\
$A_{3}$ & $55$ \\
\hline
\end{tabular}
\end{minipage}%
\begin{minipage}[b]{0.3\linewidth}\centering
\begin{tabular}[t]{|c|l|}
\hline
Label & DS data \\
\hline
$D_{6}(a_{2})$ & $26, 21, 20, 12, 2, 0$ \\
$A_{5}A_{1}$ & $26, 21, 20, 12, 1, 0$ \\
\hline
$A_{3}A_{2}$ & $39, 35, 26, 23, 7, 0$ \\
$D_{4}$ & $39, 35, 28$ \\
$D_{4}(a_{1})$ & $39, 35, 28, 26, 14, 0$ \\
\hline
$(A_{3}A_{1})^{(1)}$ & $43, 40, 37, 29, 21, 20, 4, 0$ \\
$A_{2}^2A_{1}$ & $43, 40, 37, 27, 8, 0$ \\
\hline
\end{tabular}
\end{minipage}
\caption{$G = E_7$ (adjoint), $p=2$}
\end{table}

\begin{table}[h]
\begin{minipage}[b]{0.3\linewidth}\centering
\begin{tabular}{|c|c|l|}
\hline
Label & DS data \\
\hline
$E_8$ & $10,7,0$ \\
$E_8(a_1)$ & $10,1,0$ \\
\hline
\end{tabular}
\caption{$G = E_8$, $p=5$}
\end{minipage}%
\begin{minipage}[b]{0.7\linewidth}\centering
\begin{minipage}[b]{0.5\linewidth}
\begin{tabular}[t]{|c|l|}
\hline
Label & NDS data  \\
\hline
$E_{8}$ &  $15, 18, 248$ \\
$E_{8}(a_{1})$ & $16, 18, 248$ \\
\hline
$E_{6} A_{1}$ & $32, 33, 39$ \\
$E_{8}(b_{6})$ & $33, 34, 39, 60, 165, 248$ \\
$E_{6}(a_{1}) A_{1}$ & $34, 40, 53, 132, 248$ \\
$(A_{7})_{3}$ & $33, 35, 47, 105, 248$ \\
\hline
$A_{7}$ & $36, 36, 36$ \\
$E_{6}$ & $36, 37, 59$ \\
$E_{6}(a_{1})$ & $39, 56, 60$ \\
\hline
$E_{7}(a_{4})$ & $40, 47, 63$ \\
$A_{6} A_{1}$ & $38, 38, 38$ \\
\hline
$D_{6}(a_{1})$ & $43, 49$ \\
$A_{6}$ & $41, 41$ \\
\hline 
\end{tabular}
\end{minipage}%
\begin{minipage}[b]{0.3\linewidth}
\begin{tabular}[t]{|c|l|}
\hline
Label & NIL data  \\
\hline
$E_{6}(a_{3}) A_{1}$ & $36$ \\
$D_{5}(a_{1}) A_{2}$ & $43$ \\
\hline
\end{tabular}
\end{minipage}
\caption{$G = E_8$, $p=3$}\end{minipage}\end{table}

\begin{table}[h]
\begin{minipage}[b]{0.26\linewidth}\centering
\begin{tabular}[t]{|c|l|}
\hline
Label & ALG, ALG' \\
& data \\
\hline
$E_{8}$ & $79,74$ \\

$E_{8}(a_{1})$ & $79,73$ \\

$E_{8}(a_{2})$ & $89,85$ \\

$E_{8}(a_{3})$ & $108,103$ \\

$E_{8}(a_{4})$ & $139,136$ \\
\hline

$E_{7}$ & $68,64$ \\

$E_{8}(b_{4})$ & $70,63$ \\

$E_{7}(a_{1})$ & $72,69$ \\
\hline

$E_{8}(a_{5})$ & $79,72$ \\

$E_{8}(b_{5})$ & $91,86$ \\
\hline
$(D_{7})_{2}$ & $102,98$ \\

$E_{8}(a_{6})$ & $107,101$ \\
\hline
$D_{7}$ & $69,65$ \\

$D_{7}(a_{1})$ & $77,70$ \\

$A_{7}$ & $85,72$ \\

$D_{7}(a_{2})$ & $101,93$ \\
\hline
$E_{7}(a_{2})$ & $61,57$ \\

$E_{6} A_{1}$ & $61,53$ \\
\hline
$(D_{7}(a_{1}))_{2}$ & $69,63$ \\

$E_{7}(a_{3})$ & $82,78$ \\
\hline
$D_{6}$ & $71,68$ \\

$(D_{5} A_{2})_{2}$ & $72,65$ \\

$E_{7}(a_{4})$ & $74,67$ \\

$A_{6} A_{1}$ & $77,66$ \\

$D_{6}(a_{1})$ & $82,78$ \\

$(A_{6})_{2}$ & $85,76$ \\
\hline
\end{tabular}
\end{minipage}%
\begin{minipage}[b]{0.26\textwidth}\centering
\begin{tabular}[t]{|c|l|}
\hline
Label & ALG, ALG' \\
& data \\
\hline
$D_{5} A_{2}$ & $76,67$ \\

$E_{8}(a_{7})$ & $102,99$ \\
\hline
$D_{6}(a_{2})$ & $73,71$ \\

$A_{5} A_{1}$ & $73,58$ \\
\hline
$D_{5}$ & $64,60$ \\

$E_{6}(a_{3})$ & $70,64$ \\
\hline
$(D_{4} A_{2})_{2}$ & $64,58$ \\

$A_{4} A_{2} A_{1}$ & $64,57$ \\

$D_{5}(a_{1}) A_{1}$ & $71,64$ \\
\hline
$D_{4} A_{2}$ & $83,73$ \\

$A_{3}^2$ & $88,76$ \\

$D_{4}(a_{1}) A_{2}$ & $99,72$ \\
\hline
$D_{4} A_{1}$ & $91,88$ \\

$A_{3} A_{2} A_{1}$ & $92,86$ \\

$(A_{3} A_{2})_{2}$ & $101,95$ \\

$D_{4}(a_{1}) A_{1}$ & $109,104$ \\
\hline
$D_{4}$ & $88,85$ \\

$D_{4}(a_{1})$ & $92,90$ \\
\hline
$A_{3} A_{1}$ & $94,92$ \\

$2A_{2} A_{1}$ & $94,87$ \\
\hline 
\end{tabular}
\end{minipage}%
\begin{minipage}[b]{0.43\textwidth}\centering
\begin{tabular}[t]{|c|l|}
\hline
Label & DS data \\
\hline
$D_{5} A_{1}$ & $44,39,37,29,19,18,2,0$ \\

$E_{7}(a_{5})$ & $44,39,37,29,17,1,0$ \\

$E_{6}(a_{3}) A_{1}$ & $44,39,37,27,8,0$ \\

\hline
$D_{5}(a_{1}) A_{2}$ & $48,43,37,21,1,0$ \\

$A_{4} A_{3}$ & $48,43,37,19,1,0$ \\
\hline

$A_{3} A_{1}^2$ & $80,76,75,71,70,54,39,38,6,0$ \\

$A_{2}^2 A_{1}^2$ & $80,76,75,71,70,50,14,0$ \\

\hline
\end{tabular}
\end{minipage}
\caption{$G = E_8$, $p=2$}
\end{table}

\clearpage

{\footnotesize
\bibliographystyle{amsalpha}
\providecommand{\bysame}{\leavevmode\hbox to3em{\hrulefill}\thinspace}
\providecommand{\MR}{\relax\ifhmode\unskip\space\fi MR }
\providecommand{\MRhref}[2]{%
  \href{http://www.ams.org/mathscinet-getitem?mr=#1}{#2}
}
\providecommand{\href}[2]{#2}

}

\end{document}